\def\rn{{\mathbb{R}^n}}
\def\nn{{\mathbb N}}
\def\cg{{\mathcal G}}
\def\cs{{\mathcal S}}
\def\lap{\Delta}
\def\fz{\infty}
\def\az{\alpha}
\def\bz{\beta}
\def\ez{\epsilon}
\def\vi{\varphi}
\def\va{\varepsilon}
\def\lf{\left}
\def\r{\right}
\def\ls{\lesssim}
\def\noz{\nonumber}
\def\loc{{\mathop\mathrm{\,loc\,}}}
\def\supp{\mathop\mathrm{\,supp\,}}
\def\Xint#1{\mathchoice
{\XXint\displaystyle\textstyle{#1}}%
{\XXint\textstyle\scriptstyle{#1}}%
{\XXint\scriptstyle\scriptscriptstyle{#1}}%
{\XXint\scriptscriptstyle\scriptscriptstyle{#1}}%
\!\int}
\def\XXint#1#2#3{{\setbox0=\hbox{$#1{#2#3}{\int}$ }
\vcenter{\hbox{$#2#3$ }}\kern-.6\wd0}}
\def\dashint{\Xint-}
\def\f{\frac}
\def\t{{\theta}}
\def\lz{{\lambda}}
\def\d{{\delta}}
\def\al{{\alpha}}
\def\hs{\hspace{0.25cm}}
\def\CG{{\mathcal G}}
\def\CS{{\mathcal S}}
\def\hs{\hspace{0.25cm}}
\newcommand{\wt}{\widetilde}
\newtheorem{theorem}{Theorem}[section]
\newtheorem{lemma}[theorem]{Lemma}
\theoremstyle{definition}
\newtheorem{remark}[theorem]{Remark}
\renewcommand{\appendix}{\par
   \setcounter{section}{0}%
   \setcounter{subsection}{0}%
   \setcounter{subsubsection}{0}%
   \gdef\thesection{\@Alph\c@section}%
   \gdef\thesubsection{\@Alph\c@section.\@arabic\c@subsection}%
   \gdef\theHsection{\@Alph\c@section.}%
   \gdef\theHsubsection{\@Alph\c@section.\@arabic\c@subsection}%
   \csname appendixmore\endcsname
 }
\numberwithin{equation}{section}
\begin{document}

\arraycolsep=1pt

\title{\bf\Large Littlewood-Paley Characterizations of Fractional Sobolev Spaces via Averages on Balls
\footnotetext{\hspace{-0.35cm} 2010 {\it Mathematics Subject Classification}. Primary 46E35;
Secondary 42B25, 42B20, 42B35.
\endgraf {\it Key words and phrases.} Sobolev space, ball average, Lusin-area function, $g_\lambda^*$-function
\endgraf This project is supported by the National
Natural Science Foundation of China
(Grant Nos.~11571039 and 11471042),
the Specialized Research Fund for the Doctoral Program of Higher Education
of China (Grant No. 20120003110003) and the Fundamental Research
Funds for Central Universities of China (Grant No. 2014KJJCA10).}}
\author{Feng Dai, Jun Liu, Dachun Yang\,\footnote{Corresponding author}\ \
and Wen Yuan}
\date{}
\maketitle

\vspace{-0.8cm}

\begin{center}
\begin{minipage}{13cm}
{\small {\bf Abstract}\quad
In this paper, the authors characterize Sobolev spaces $W^{\alpha,p}({\mathbb R}^n)$
with the smoothness order $\alpha\in(0,2]$ and $p\in(\max\{1, \frac{2n}{2\alpha+n}\},\infty)$, via the Lusin area
function and the Littlewood-Paley $g_\lambda^\ast$-function in terms of centered ball averages.
The authors also show that the condition $p\in(\max\{1, \frac{2n}{2\alpha+n}\},\infty)$ is nearly sharp in the sense that these
characterizations are no longer true when $p\in (1,\max\{1, \frac{2n}{2\alpha+n}\})$.
These characterizations provide a new possible way to introduce fractional Sobolev spaces with smoothness order in $(1,2]$ on metric measure spaces.}
\end{minipage}
\end{center}

\section{Introduction\label{s1}}
\hskip\parindent The theory of Sobolev spaces is one of the central topics in  analysis on metric measure spaces. In the last two decades, several different approaches to introduce Sobolev spaces on
metric measure spaces were developed; see, for example, \cite{Haj, Shan, hkst, AMV, dgyy1, hyy15}.
In 1996, Haj\l asz \cite{Haj} introduced the notion
of Haj{\l}asz gradients, which has become an effective tool for developing the first order Sobolev spaces on metric
measure spaces. Soon after, Shanmugalingam \cite{Shan} introduced another kind
of the first order Sobolev spaces on metric measure spaces by means of the
notion of upper and weak upper gradients, which, comparing with Haj{\l}asz gradients, has stronger locality. Via introducing the fractional version of Haj{\l}asz gradients,
Hu \cite{hu03} and Yang \cite{y03} developed fractional Sobolev spaces
with smoothness order in $(0,1)$ on fractals and metric measure spaces, respectively.
Till now, the theory of Sobolev spaces with smoothness order in $(0,1]$
on metric measure spaces has been thoroughly
investigated and achieved great progresses (see, for example, \cite{fhk,hk00,h03}, the recent monograph \cite{hkst} and the references therein).

In recent years, there also exist some attempts to find a suitable way for
developing Sobolev spaces with smoothness order bigger
than $1$ on metric measure spaces. The key step is to find a suitable substitute of
high order derivatives on metric measure spaces.
In 2002, under a priori assumption on the existence of polynomials,
Liu et al. \cite{llw02} introduced high order Sobolev spaces on metric measure spaces.
Triebel \cite{t10,t11} and Haroske and Triebel
\cite{ht11,ht13} characterized Sobolev spaces
on $\rn$ with order bigger than $1$ via a pointwise
inequality involving high order differences, in spirit of Haj\l asz \cite{Haj}.
However, it remains unknown how to generalize
high order differences to metric measure spaces.

Let  $\lap:=\sum^n_{i=1}(\frac \partial{\partial x_i})^2$ be the Laplace operator.
Recall that, for all $p\in (1,\fz)$ and $\alpha\in(0,\fz)$,
the \emph {fractional Sobolev space $W^{\alpha,p}(\rn)$} is defined as the spaces consisting of all $f\in L^p(\rn)$ such that  $(-\lap)^{\alpha/2}f\in L^p(\rn)$, where $(-\lap)^{\alpha/2}f$ is defined via the Fourier
transform
by $[(-\lap)^{\alpha/2}f]^\wedge(\xi):=|\xi|^\al\widehat{f}(\xi)$ for all $\xi\in\rn$.
Recently, Alabern et al. \cite{AMV} proved that the Sobolev
spaces $W^{\alpha,p}(\rn)$ for $\alpha\in(0,2]$ and $p\in (1,\fz)$ can  be characterized by the square functions $\mathcal{\widetilde{S}}_\alpha(f)$ when $\az\in(0,2)$ and
 $\mathcal{\widetilde{S}}(f,g)$ when $\az=2$, which are defined, respectively, by setting, for all $f,\,g\in L^1_{{\rm loc}}(\rn)\cap \cs'(\rn)$,
 $$\mathcal{\widetilde{S}}_\alpha(f)(x):=\lf\{\int_0^\fz\lf|\dashint_{B(x,t)}
      \frac{f(y)-f(x)}{t^\alpha} \,dy\r|^2
      \,\frac{dt}t \r\}^{\frac 12},\quad  x\in\rn$$
 and
  $$\mathcal{\widetilde{S}}(f,g)(x):=\lf\{\int_0^\fz\lf|\dashint_{B(x,t)}
      \frac{f(y)-f(x)-B_tg(x)|y-x|^2}{t^2} \,dy\r|^2
      \,\frac{dt}t \r\}^{\frac 12},\quad x\in\rn.$$
Here and hereafter, $B(x,t)$ denotes the ball with center at $x\in \rn$ and radius  $t\in(0,\fz)$, and $\dashint_{B} g(y) \,dy$ denotes the \emph{integral average} of $g\in L^1_\loc(\rn)$ on a ball $B\subset \rn$, namely,
\begin{equation*}
\dashint_{B(x,t)} g(y) \,dy:= \frac1{|B(x,t)|}\int_{B(x,t)} g(y) \,dy=:B_tg(x).
\end{equation*}
More precisely, Alabern et al. in \cite[Theorems 1, 2 and 3]{AMV} proved the following Theorems \ref{Thm3} and \ref{Thm4}.

\setcounter{theorem}{0}
\renewcommand{\thetheorem}{\arabic{section}.\Alph{theorem}}

\begin{theorem}\label{Thm3}
Let $p\in(1,\fz)$ and $\al\in(0,2)$. Then the following statements are equivalent:
\begin{enumerate}
  \item[\rm{(i)}] $f\in W^{\al,p}(\rn)$;
  \item[\rm{(ii)}] $f\in L^p(\rn)$ and $\widetilde{\CS}_\al(f)\in L^p(\rn)$.
\end{enumerate}
In any of the above cases, $\|\widetilde{\CS}_\al(f)\|_{L^p(\rn)}$ is equivalent to $\|(-\lap)^{\al/2}f\|_{L^p(\rn)}$.
\end{theorem}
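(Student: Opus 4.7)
The plan is to reduce the theorem to a Littlewood--Paley square function estimate for a convolution kernel built from the indicator of the unit ball, and then to invoke standard vector-valued Calder\'on--Zygmund theory, together with a Calder\'on reproducing formula for the reverse bound.

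First I would set up the Fourier-side picture. Let $K:=|B(0,1)|^{-1}\chi_{B(0,1)}$, so that $B_tf=K_t\ast f$ with $K_t(y):=t^{-n}K(y/t)$, and let $m:=\widehat K$. The function $m$ is radial, smooth, satisfies $m(0)=1$, $m(\xi)=1-c_n|\xi|^2+O(|\xi|^4)$ near the origin, and decays like $|\xi|^{-(n+1)/2}$ at infinity. Define $\psi(\xi):=(m(\xi)-1)/|\xi|^\az$ and let $\Psi\in\cs'(\rn)$ be given by $\widehat\Psi=\psi$. A direct computation on the Fourier side gives the identity
\begin{equation*}
\dashint_{B(x,t)}\frac{f(y)-f(x)}{t^\az}\,dy=(\Psi_t\ast g)(x),\qquad g:=(-\lap)^{\az/2}f,
\end{equation*}
valid whenever $f\in\cs'(\rn)$ has $|\xi|^\az\widehat f\in\cs'(\rn)$. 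Thus $\widetilde{\CS}_\az(f)(x)=(\int_0^\fz|\Psi_t\ast g(x)|^2\,dt/t)^{1/2}$, and the theorem reduces to the equivalence
$\|(\int_0^\fz|\Psi_t\ast g|^2\,dt/t)^{1/2}\|_{L^p(\rn)}\sim\|g\|_{L^p(\rn)}$ for all $g\in L^p(\rn)$ and $p\in(1,\fz)$.

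The second step is the $L^2$ case via Plancherel. Since $\psi$ is radial, the change of variables $s:=t|\xi|$ gives
\begin{equation*}
\int_0^\fz|\psi(t\xi)|^2\,\frac{dt}{t}=\int_0^\fz|\psi(se_1)|^2\,\frac{ds}{s}=:c_0,
\end{equation*}
and $c_0\in(0,\fz)$ because $|\psi(se_1)|^2/s\sim s^{3-2\az}$ at $0$ (integrable since $\az<2$) and is $O(s^{-2\az-1})$ at infinity (integrable since $\az>0$). Plancherel and Fubini then yield $\|(\int_0^\fz|\Psi_t\ast g|^2\,dt/t)^{1/2}\|_{L^2}=c_0^{1/2}\|g\|_{L^2}$, which also proves the theorem in the case $p=2$.

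The third step is to upgrade to all $p\in(1,\fz)$. From the zero of $\widehat\Psi$ of order $2-\az$ at the origin and its (oscillatory) decay like $|\xi|^{-\az}$ at infinity one derives a splitting $\Psi=\Psi^{(0)}+\Psi^{(\fz)}$, where $\Psi^{(0)}$ is supported near the origin with an integrable Riesz-potential type singularity $|y|^{-(n-\az)}$ and $\Psi^{(\fz)}$ is smooth and rapidly decaying, together with matching estimates for $\nabla\Psi$. These estimates suffice for the vector-valued Calder\'on--Zygmund theorem (with values in $L^2(\mathbb R_+,dt/t)$) to yield the upper bound $\|(\int_0^\fz|\Psi_t\ast g|^2\,dt/t)^{1/2}\|_{L^p(\rn)}\ls\|g\|_{L^p(\rn)}$. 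For the reverse inequality I would choose an auxiliary radial $\Theta\in\cs(\rn)$ with $\widehat\Theta$ compactly supported away from the origin and normalized so that $\int_0^\fz\widehat\Psi(t\xi)\widehat\Theta(t\xi)\,dt/t\equiv1$ on $\rn\setminus\{0\}$, obtain the Calder\'on reproducing formula $g=\int_0^\fz\Theta_t\ast(\Psi_t\ast g)\,dt/t$, and then dualize: by Cauchy--Schwarz and the upper bound applied to the Schwartz function $\Theta$, one gets $\|g\|_{L^p}\ls\|(\int_0^\fz|\Psi_t\ast g|^2\,dt/t)^{1/2}\|_{L^p}$. This also furnishes the implication $\widetilde{\CS}_\az(f)\in L^p\Rightarrow(-\lap)^{\az/2}f\in L^p$, so that $f\in W^{\az,p}(\rn)$.

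The main obstacle is the kernel analysis of $\Psi$: because $\widehat\Psi$ decays only like $|\xi|^{-\az}$ at infinity, $\Psi$ is not Schwartz but has a Riesz-type singularity at the origin, so one cannot simply read off smoothness estimates from $\widehat\Psi\in L^1$. One must carefully split $\widehat\Psi$ into a piece smooth at infinity and a homogeneous piece, and estimate the physical-space derivatives of each in a $t$-uniform manner suitable for the vector-valued Calder\'on--Zygmund theorem; this is exactly the place where the restriction $\az\in(0,2)$ enters, since a further vanishing of $m(\xi)-1$ at the origin would be needed to accommodate $\az\ge 2$.
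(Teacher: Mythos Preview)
The paper does not actually prove this theorem; it is quoted from Alabern--Mateu--Verdera \cite{AMV} as background, and the paper only describes their method: ``The proofs of Theorems~\ref{Thm3} and~\ref{Thm4} in \cite{AMV} are based on the theory of vector valued singular integrals'' and, for the reverse estimate, ``the norm estimates $\|(-\lap)^{\al/2}f\|_{L^p(\rn)}\ls \|\CG_\al(f)\|_{L^p(\rn)}$ \ldots\ were obtained via the polarization and a duality argument.'' With that caveat, your proposal is correct and follows essentially the same route as the AMV proof the paper summarizes: rewrite $\widetilde{\CS}_\al(f)$ as a continuous $g$-function $\bigl(\int_0^\fz|\Psi_t\ast g|^2\,dt/t\bigr)^{1/2}$ with $g=(-\lap)^{\al/2}f$, establish $L^2$ by Plancherel, and push to $p\in(1,\fz)$ by vector-valued Calder\'on--Zygmund theory.

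The only noteworthy difference is in the reverse inequality. AMV obtain $\|(-\lap)^{\al/2}f\|_{L^p}\ls\|\CG_\al(f)\|_{L^p}$ by a polarization identity (from the exact $L^2$ identity) followed by duality against $L^{p'}$; you instead set up a Calder\'on reproducing formula $g=\int_0^\fz\Theta_t\ast(\Psi_t\ast g)\,dt/t$ with a compactly Fourier-supported $\Theta$ and dualize. Both arguments are standard and essentially equivalent here; your version has the minor advantage of being more modular (it does not use the radial symmetry that makes the $L^2$ constant exactly computable), while the polarization route is shorter once that constant is in hand. Your identification of the kernel obstacle---that $\widehat\Psi(\xi)\sim -|\xi|^{-\al}$ at infinity forces a Riesz-type singularity of $\Psi$ at the origin, so the H\"ormander condition must be checked via a splitting rather than read off from a Schwartz bound---is exactly the point where the work lies, and is handled in \cite{AMV} in the same spirit.
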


\begin{theorem}\label{Thm4}
Let $p\in (1,\fz)$. Then the following statements are equivalent:
\begin{enumerate}
\item[\rm{(i)}] $f\in W^{2,p}(\rn)$;
\item[\rm{(ii)}] $f\in L^p(\rn)$ and there exists a function $g\in L^p(\rn)$ such that $\widetilde{S}(f,g)\in L^p(\rn)$.
\end{enumerate}

If $f\in W^{2,p}(\rn)$, then one can take $g:=\lap f/(2n)$ and, if (ii) holds true,
then necessarily $g:=\lap f/(2n)$ almost everywhere.
In any of the above cases, $\|\widetilde{\mathcal{S}}(f,g)\|_{L^p(\rn)}$ is equivalent to $\|\lap f\|_{L^p(\rn)}$.
\end{theorem}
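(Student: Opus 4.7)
The plan is Fourier-analytic, in the spirit of Alabern, Mateu and Verdera \cite{AMV}. The first step is to reduce the integrand of $\widetilde{S}(f,g)$ to a Fourier multiplier form. By radial symmetry, $\dashint_{B(x,t)}|y-x|^2\,dy=\frac{n}{n+2}t^2$, so the integrand equals
\begin{equation*}
F_t(x):=\frac{B_tf(x)-f(x)}{t^2}-\frac{n}{n+2}B_tg(x).
\end{equation*}
Let $m(\eta):=\widehat{|B(0,1)|^{-1}\mathbf{1}_{B(0,1)}}(\eta)$, so that $\widehat{B_tf}(\xi)=m(t\xi)\widehat{f}(\xi)$; the Taylor expansion $m(\eta)=1-\frac{|\eta|^2}{2(n+2)}+O(|\eta|^4)$ near $\eta=0$ shows that the singular piece of $\widehat{F_t}(\xi)$ as $t\to 0$ cancels precisely when $g=\lap f/(2n)$. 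With this choice,
\begin{equation*}
\widehat{F_t}(\xi)=-\Phi(t\xi)\,\widehat{\lap f}(\xi),\qquad \Phi(\eta):=\frac{m(\eta)-1}{|\eta|^2}+\frac{m(\eta)}{2(n+2)},
\end{equation*}
with $\Phi(\eta)=O(|\eta|^2)$ as $\eta\to 0$ and $|\Phi(\eta)|\ls|\eta|^{-2}$ as $|\eta|\to\infty$, all derivatives satisfying matching Mikhlin-type bounds.

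For the direction (i)$\Rightarrow$(ii), taking $g:=\lap f/(2n)$ yields
\begin{equation*}
\widetilde{S}(f,g)(x)=\left\{\int_0^\infty|T_t(\lap f)(x)|^2\,\frac{dt}{t}\right\}^{1/2},
\end{equation*}
where $T_t$ is the Fourier multiplier operator with symbol $-\Phi(t\xi)$. The matched decay of $\Phi$ at $0$ and at $\infty$, together with homogeneity, implies that $\int_0^\infty|\Phi(t\xi)|^2\,dt/t$ is a finite positive constant independent of $\xi\in\rn\setminus\{0\}$, and Plancherel immediately gives the $L^2$-equivalence. For general $p\in(1,\infty)$, the upper bound $\|\widetilde{S}(f,g)\|_{L^p(\rn)}\ls\|\lap f\|_{L^p(\rn)}$ follows from a vector-valued Calder\'on-Zygmund argument applied to the $\ell^2(dt/t)$-valued multiplier operator $h\mapsto\{T_th\}_{t>0}$, whose kernel inherits the necessary H\"ormander-type estimates from the symbol bounds on $\Phi$.

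For the direction (ii)$\Rightarrow$(i), the first task is to identify $g$. Given $f,g\in L^p(\rn)$ with $\widetilde{S}(f,g)\in L^p(\rn)$, Fubini ensures that $t\mapsto\langle F_t,\varphi\rangle$ lies in $L^2(dt/t)$ for every $\varphi\in\cs(\rn)$. From the Fourier representation of $F_t$ and the expansion of $m$ at the origin, one computes the pointwise limit $\lim_{t\to 0}\widehat{F_t}(\xi)=\frac{1}{2(n+2)}(\widehat{\lap f}(\xi)-2n\widehat{g}(\xi))$; if $\lap f\ne 2ng$ in $\cs'(\rn)$, pairing against a suitable $\varphi$ yields $\langle F_t,\varphi\rangle\to c\ne 0$ as $t\to 0$, so $|\langle F_t,\varphi\rangle|^2\,dt/t$ fails to be integrable near $0$, a contradiction. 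Hence $g=\lap f/(2n)$ almost everywhere, and the matching lower bound from the forward analysis then forces $\|\lap f\|_{L^p(\rn)}\ls\|\widetilde{S}(f,g)\|_{L^p(\rn)}$, so $f\in W^{2,p}(\rn)$. I expect this lower bound to be the main obstacle: since $\Phi$ is not supported in any dyadic annulus and behaves quite differently at $0$ and at $\infty$, the standard Littlewood-Paley duality must be replaced by a Calder\'on-type reproducing formula $\int_0^\infty\widetilde{T}_tT_t\,dt/t=c\,\mathrm{Id}$ built from a carefully chosen dual multiplier family $\widetilde{T}_t$.
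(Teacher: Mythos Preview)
This theorem is not proved in the paper; it is quoted from \cite{AMV}, with the remark (in the paragraph following Theorem~\ref{Thm4} and again in the introduction) that the argument there uses vector-valued singular integrals for (i)$\Rightarrow$(ii) and ``polarization and a duality argument'' for the reverse norm inequality. Your (i)$\Rightarrow$(ii) follows exactly this template and is correct, modulo one slip: the claimed decay $|\Phi(\eta)|\ls|\eta|^{-2}$ at infinity requires $(n+1)/2\ge 2$, i.e.\ $n\ge 3$, since the term $m(\eta)/(2(n+2))$ only decays like $|\eta|^{-(n+1)/2}$ (cf.\ Lemma~\ref{lem0'}(i)); for $n\in\{1,2\}$ the decay is slower, though still enough for $\int_0^\infty|\Phi(t\xi)|^2\,dt/t<\infty$ and for the kernel estimates.

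Your identification of $g$ via the limit of $\langle F_t,\varphi\rangle$ is correct and arguably cleaner than the mollification-plus-Lebesgue-differentiation argument the paper itself uses in the analogous step of Theorem~\ref{Thm2}. Two small points: the phrase ``pointwise limit $\lim_{t\to 0}\widehat{F_t}(\xi)$'' is not meaningful when $p>2$ (only the pairing version makes sense), and your ``Fubini'' is really Minkowski's integral inequality followed by H\"older, giving $\bigl(\int_0^\infty|\langle F_t,\varphi\rangle|^2\,dt/t\bigr)^{1/2}\le\|\widetilde{\mathcal S}(f,g)\|_{L^p(\rn)}\|\varphi\|_{L^{p'}(\rn)}$. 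Once $g=\lap f/(2n)$ in $\cs'(\rn)$ with $g\in L^p(\rn)$, the membership $f\in W^{2,p}(\rn)$ is immediate, so (ii)$\Rightarrow$(i) is already done at that point. For the remaining lower norm bound your Calder\'on reproducing formula would work, but it is heavier than what \cite{AMV} actually does: from the Plancherel identity one gets by polarization $\langle\lap f,\lap h\rangle=c^{-2}\int_{\rn}\int_0^\infty F_t^{(f)}\,\overline{F_t^{(h)}}\,\frac{dt}{t}\,dx$, and then Cauchy--Schwarz in $t$, H\"older in $x$, and the already-proved upper bound applied to $h$ give $|\langle\lap f,\lap h\rangle|\ls\|\widetilde{\mathcal S}(f,g)\|_{L^p(\rn)}\|\lap h\|_{L^{p'}(\rn)}$; taking the supremum over $h$ finishes without any dual multiplier family.
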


The proofs of Theorems \ref{Thm3} and \ref{Thm4} in \cite{AMV} are based on
the theory of vector valued singular integrals (see, for example,
\cite[Theorem 3.4]{G-CF}).
Recently, Haj\l asz and Liu \cite{hl} provided a new and simplified proof of
Theorem \ref{Thm3} when $\az=1$, and
also established a Marcinkiewicz integral type characterization
of $W^{1,p}(\rn)$ (see \cite[Theorem 1.4]{hl}), which states that, for all $p\in(1,\fz)$, $f\in W^{1,p}(\rn)$
if and only if $f\in L^p(\rn)$ and
$$\lf\{\int_0^\fz\lf| f(\cdot)-\frac1{|S(\cdot,t)|}\int_{S(\cdot,t)}f(y) \,d\sigma(y)\r|^2
      \,\frac{dt}t \r\}^{\frac 12}\in L^p(\rn),$$
where $S(x,t)$ denotes the sphere in $\rn$ with center $x\in\rn$ and radius $t\in (0,\fz)$,
and $d\sigma$ the Lebesgue surface measure on $S(x,t)$. However, it is unclear how
to define sphere integrals in a general metric measure setting.
On the other hand, Sato in \cite[Theorem 1.5]{s14} proved that, for any
$\az\in(0,2)$, $p\in(1,\fz)$, Muckenhoupt $A_p$-weight $w$ and Schwartz function $f$,
the weighted Lebesgue norm $\|f\|_{L^p_w(\rn)}$
is equivalent to $\|T_\az(f)\|_{L^p_w(\rn)}$, with equivalent
positive constants independent of $f$, where 
$$T_\az(f)(x):=\lf\{\int_0^\fz\,\lf|I_\az(f)(x)-\Phi_t\ast I_\az(f)(x)\r|^2\frac{dt}{t^{1+2\az}}\r\}^{1/2},\quad x\in\rn,$$
$I_\az$ denotes the Riesz potential operator defined by
$\widehat{I_\az(f)}(\xi):=(2\pi|\xi|)^{-\az} \widehat{f}(\xi)$
for all $\xi\in\rn\setminus\{0\}$,   $\Phi$ is a bounded radial function on
$\rn$ with compact support satisfying $\int_\rn \Phi(x)\,dx=1$,
and $\Phi_t(x):=t^{-n}\Phi(x/t)$ for all $x\in\rn$ and $t\in(0,\fz)$.
This, via taking $\Phi:=|B(0,1)|^{-1}\chi_{B(0,1)}$,
further induces a weighted version of Theorem \ref{Thm3}, 
that is, $f\in W^{\az,p}_w(\rn)$
if and only if both $f$ and $\widetilde{S}_\az(f)$ belong to the weighted Lebesgue space $L^p_w(\rn)$ (see \cite[Corollary 1.2]{s14}).

We also point out that
the corresponding characterization of Theorems \ref{Thm3} and \ref{Thm4}
for Sobolev spaces with smoothness order bigger
than $2$ was also obtained in  \cite{AMV}. But, in this article, for the presentation simplicity,
we only consider Sobolev spaces with smoothness order in $(0,2]$.
These characterizations provide a possible way to introduce the
Sobolev spaces with the smoothness order bigger than 1 on metric
measure spaces.

Notice that $\widetilde{\mathcal{S}}_\alpha(f)$ and $\mathcal{\widetilde{S}}(f,g)$  above can be reformulated, respectively, as follows:
\begin{equation}\label{g-fn}
\cg_\al(f)(x) := \lf\{\int_0^\fz \lf|\frac{B_tf(x)-f(x)}{t^\alpha}\r|^2\,\frac{dt}t\r\}^{\frac12},\quad \al\in(0,2)\ {\rm and}\ x\in\rn
\end{equation}
and
\begin{equation}\label{g-fn1}
\cg(f,g)(x) := \lf\{\int_0^\fz \lf|\frac{B_tf(x)-f(x)}{t^2}-B_tg(x)\r|^2\,\frac{dt}t\r\}^{\frac12},\quad x\in\rn,
\end{equation}
which can be seen as the \emph{Littlewood-Paley $g$-function} of
$\frac{B_t f-f}{t^\al}$ and $\frac{B_t f-f}{t^2}-B_tg$, respectively.
Therefore, it is a \emph{natural question} to ask whether the corresponding
Lusin area function and the corresponding Littlewood-Paley $g_\lz^\ast$-function can characterize
$W^{\alpha,p}(\rn)$ or not. Here the Lusin area function and the Littlewood-Paley $g_\lz^\ast$-function
are defined, respectively, by setting, for any $f,\,g\in L^1_{\loc}(\rn)$, $\lz\in(1,\fz)$ and $x\in\rn$,
\begin{equation}\label{s-fn}
\cs_\al(f)(x):= \lf\{\int_0^\fz \int_{B(x,t)}\lf|\frac{B_tf(y)-f(y)}{t^\alpha}\r|^2\, dy\,
\frac{dt}{t^{n+1}}\r\}^{\frac12}
\end{equation}
and
\begin{equation}\label{gl-fn1}
\cg_{\al,\lz}^\ast(f)(x) := \lf\{\int_0^\fz \int_\rn
  \lf|\frac{B_tf(y)-f(y)}{t^\alpha}\r|^2
  \lf(\frac t {t+|x-y|}\r)^{\lz n} \,dy\,
  \frac{dt}{t^{n+1}}\r\}^{\frac12}
\end{equation}
for $\alpha\in(0,2)$, and
\begin{equation}\label{s-fn1}
\cs(f,g)(x):= \lf\{\int_0^\fz \int_{B(x,t)}\lf|\frac{B_tf(y)-f(y)}{t^2}-B_tg(y)\r|^2\, dy\,
\frac{dt}{t^{n+1}}\r\}^{\frac12}
\end{equation}
and
\begin{equation}\label{gl-fn}
\cg_{\lz}^\ast(f,g)(x) := \lf\{\int_0^\fz \int_\rn
  \lf|\frac{B_tf(y)-f(y)}{t^2}-B_tg(y)\r|^2
  \lf(\frac t {t+|x-y|}\r)^{\lz n} \,dy\,
  \frac{dt}{t^{n+1}}\r\}^{\frac12}.
\end{equation}
To answer this question, He et al. \cite{hyy15} established the
following characterizations of
the second order Sobolev spaces.

\begin{theorem}\label{Thm0}
\begin{itemize}
\item[{\rm(I)}] If $p\in[2,\fz)$ and $n\in\nn:=\{1,2,\ldots\}$, then the following statements are equivalent:
\begin{enumerate}
  \item[\rm{(i)}] $f\in W^{2,p}(\rn)$;
  \item[\rm{(ii)}] $f\in L^p(\rn)$ and there exists $g\in L^p(\rn)$ such that $\CS(f,g)\in L^p(\rn)$;
  \item[\rm{(iii)}] $f\in L^p(\rn)$ and there exists $g\in L^p(\rn)$ such that $\CG_\lz^\ast(f,g)\in L^p(\rn)$ for some  $\lz\in(1,\fz)$.
\end{enumerate}
\item[{\rm(II)}] If $p\in(1,2)$ and $n\in\{1,2,3\}$, then the following statements are equivalent:
\begin{enumerate}
  \item[\rm{(i)}] $f\in W^{2,p}(\rn)$;
  \item[\rm{(ii)}] $f\in L^p(\rn)$ and there exists $g\in L^p(\rn)$ such that $\CS(f,g)\in L^p(\rn)$;
  \item[\rm{(iii)}] $f\in L^p(\rn)$ and there exists $g\in L^p(\rn)$ such that $\CG_\lz^\ast(f,g)\in L^p(\rn)$ for some  $\lz\in(2/p,\fz)$.
\end{enumerate}
\end{itemize}

Moreover, if $f\in W^{2,p}(\rn)$,
then $g$ in (ii) and (iii) of (I) and (II) can be taken as $g:=\frac{\lap f}{2n+4}$; while
if either of (ii) and (iii) in (I) and (II) holds true, then $g:=\frac{\lap f}{2n+4}$ almost everywhere.
In any of above cases, $\|\CS(f,g)\|_{L^p(\rn)}$ and $\lf\|\CG_\lz^*(f,g)\r\|_{L^p(\rn)}$
are equivalent to $\|\lap f\|_{L^p(\rn)}$.
\end{theorem}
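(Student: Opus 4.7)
The natural strategy is to reduce all three conditions, in each part, to the $g$-function characterization provided by Theorem \ref{Thm4}. A direct computation gives $\dashint_{B(x,t)}|y-x|^2\,dy=\frac{n}{n+2}t^2$, so $\widetilde{\cs}(f,g)(x)$ coincides with the $g$-function $\cg(f,\frac{n}{n+2}g)(x)$ from \eqref{g-fn1}; Theorem \ref{Thm4} therefore states that $f\in W^{2,p}(\rn)$ if and only if some $g\in L^p(\rn)$ makes $\cg(f,g)\in L^p(\rn)$, and necessarily $g=\lap f/(2n+4)$. It then suffices to establish, in the stated ranges of $p$, $n$ and $\lz$, the $L^p(\rn)$-equivalences
$$\|\cg(f,g)\|_{L^p(\rn)}\sim\|\cs(f,g)\|_{L^p(\rn)}\sim\|\cg_\lz^\ast(f,g)\|_{L^p(\rn)},$$
modulo controllable remainders involving $\|f\|_{L^p(\rn)}+\|g\|_{L^p(\rn)}$.

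Fubini's theorem immediately yields the exact identity $\|\cs(f,g)\|_{L^2(\rn)}=c_n\|\cg(f,g)\|_{L^2(\rn)}$ with $c_n:=[|B(0,1)|/n]^{1/2}$, while the geometric bound $t/(t+|x-y|)\ge 1/2$ for $|x-y|<t$ gives the pointwise inequality $\cs(f,g)(x)\le 2^{\lz n/2}\cg_\lz^\ast(f,g)(x)$, already delivering (iii)$\Rightarrow$(ii) in both parts. For the $L^p$ comparison of $\cs$ and $\cg$ when $p\ne 2$, I would regard the family $T(f,g)(t,y):=[B_tf(y)-f(y)]/t^2-B_tg(y)$ as a Hilbert-space-valued convolution operator on the pair $(f,g)$, with Fourier multiplier built from $\widehat{\chi_{B(0,1)}}$. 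Standard vector-valued Calder\'on--Zygmund machinery, bootstrapped from the $p=2$ identity, should then give $\|\cs(f,g)\|_{L^p(\rn)}\ls\|\cg(f,g)\|_{L^p(\rn)}$ in the relevant $p$-range; the reverse inequality would come from an $L^2$-duality/interpolation pass.

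For the $g_\lz^\ast$-comparison, I plan to decompose the $y$-integral in \eqref{gl-fn} into annuli $A_k:=\{y:2^{k-1}t\le|x-y|<2^kt\}$ ($k\ge 1$) together with $A_0:=B(x,t)$, on which $(t/(t+|x-y|))^{\lz n}\sim 2^{-k\lz n}$, to obtain the Fefferman--Stein-type decomposition
$$\cg_\lz^\ast(f,g)(x)^2\ls\sum_{k=0}^{\fz}2^{-k\lz n}\cs^{(2^k)}(f,g)(x)^2,$$
where $\cs^{(\beta)}$ is the version of \eqref{s-fn1} whose cone aperture has been enlarged to $\beta$. Combined with the standard aperture-change estimate $\|\cs^{(\beta)}\|_{L^p(\rn)}\ls\beta^{n/\min(p,2)}\|\cs(f,g)\|_{L^p(\rn)}$ (itself proved via a good-$\lambda$ argument against the Hardy--Littlewood maximal function), the resulting geometric series converges exactly when $\lz>2/\min(p,2)$, i.e., $\lz>1$ when $p\in[2,\fz)$ and $\lz>2/p$ when $p\in(1,2)$; together with the pointwise lower bound of the preceding paragraph this yields (ii)$\Leftrightarrow$(iii).

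The main anticipated obstacle is proving $\|\cg(f,g)\|_{L^p(\rn)}\ls\|\cs(f,g)\|_{L^p(\rn)}$ in the subrange $p\in(1,2)$ of part (II), since the ball-average kernel $|B(0,1)|^{-1}\chi_{B(0,1)}$ is not smooth: its Fourier transform is a Bessel function that decays only as $|\xi|^{-(n+1)/2}$, so after incorporating the $t^{-2}$ factor from second-order scaling, the Hilbert-space-valued kernel of $T$ barely satisfies the quantitative smoothness hypotheses required by vector-valued Calder\'on--Zygmund theory below $p=2$. A careful Fourier-side computation is expected to force the dimensional restriction $n\in\{1,2,3\}$ precisely at this step, accounting for its appearance in part (II) but not in part (I), where the $L^2$-based arguments available for $p\ge 2$ (duality against $L^{p'}$ with $p'\le 2$ passing through the $p=2$ identity) bypass it entirely.
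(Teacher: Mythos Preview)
The paper cites this theorem from \cite{hyy15} rather than proving it, but the machinery in Lemmas~\ref{lem-2-3}--\ref{Lem p>2} and the proof of Theorem~\ref{Thm2} make the intended argument clear. Your computation linking $\widetilde\cs$ to $\cg$, the pointwise bound $\cs\le 2^{\lz n/2}\cg_\lz^\ast$, the annular/aperture decomposition for (ii)$\Leftrightarrow$(iii), and the vector-valued Calder\'on--Zygmund scheme for (i)$\Rightarrow$(ii) all match the paper's route; the sharper bookkeeping of Lemmas~\ref{lem-2-4'}--\ref{lem-3-4'} in fact reaches $n\le 4$ (the threshold $p>\frac{2n}{n+4}$ is vacuous there) rather than only the $n\le 3$ your Bessel-decay heuristic predicts.

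The genuine gap is the reverse direction, which you wave off as ``an $L^2$-duality/interpolation pass.'' The paper explicitly flags (last paragraph before the notational conventions in Section~\ref{s1}) that the polarization-and-duality device used by Alabern--Mateu--Verdera to obtain $\|\Delta f\|_{L^p}\ls\|\cg(f,g)\|_{L^p}$ is \emph{not feasible} for $\cs$ or $\cg_\lz^\ast$. An abstract inequality $\|\cg(\mathcal F)\|_{L^p}\ls\|\cs(\mathcal F)\|_{L^p}$ is false for generic measurable $\mathcal F$ on $\rn\times(0,\fz)$ (concentrate $\mathcal F$ along a single vertical line), so no interpolation between $L^2$ and another $L^p$ endpoint can produce it; one must exploit the convolution structure of $t^{-2}(B_tf-f)-B_tg$. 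The paper supplies two separate replacements. For the implication (ii)$\Rightarrow$(i) it mollifies (proof of Theorem~\ref{Thm2}): $\cs(f_\ez,g_\ez)\le\cs(f,g)\ast\phi_\ez\in L^p$, while $f_\ez\in W^{2,p}$ so the already-proven forward direction gives $\cs(f_\ez,\Delta f_\ez/(2n+4))\in L^p$; subtracting and sending $t\to 0^+$ via Lebesgue differentiation forces $g_\ez=\Delta f_\ez/(2n+4)$, hence $g=\Delta f/(2n+4)$ and $f\in W^{2,p}$. For the quantitative bound $\|\Delta f\|_{L^p}\ls\|\cs(f,g)\|_{L^p}$, the paper (in the fractional analogue, proof of Theorem~\ref{Thm1}) writes $\|(-\Delta)^{\al/2}f\|_{L^p}\sim\|f\|_{\dot F^\al_{p,2}}$, realizes the Triebel--Lizorkin norm through a Lusin-area characterization with a smooth $\vi$, dominates $\vi_t\ast f$ pointwise by the Hardy--Littlewood maximal function of $y\mapsto\dashint_{B(y,t)}|B_tf-f|$, and closes with the Fefferman--Stein vector-valued maximal inequality. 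Your plan needs one of these two mechanisms in place of the duality step; without it both (ii)$\Rightarrow$(i) and the asserted norm equivalence remain unproved.
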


\setcounter{theorem}{0}
\renewcommand{\thetheorem}{\arabic{section}.\arabic{theorem}}

Observe that, when $p\in[2,\fz)$, Theorem \ref{Thm0} gives a complete answer to the previous question
for $W^{2,p}(\rn)$.  However, when $p\in(1,2)$, the question was answered only for $W^{2,p}(\rn)$
when $n\in\{1,2,3\}$.

The main purpose of this paper is to give a more complete answer to the previous question
on characterizations of fractional Sobolev spaces  with smoothness order
in $(0,2]$ via the Lusin-area or the Littlewood-Paley $g_\lz^*$-functions.
We first have the following characterizations of $W^{2,p}(\rn)$, which complement
Theorem \ref{Thm0} when $p\in(1,2)$.

\begin{theorem}\label{Thm2}
Let $n\in[4,\fz)\cap\nn$.

\begin{itemize}
\item[{\rm(I)}] If $p\in(\f{2n}{4+n},2)$, then the following statements are equivalent:
\begin{enumerate}
  \item[\rm{(i)}] $f\in W^{2,p}(\rn)$;
  \item[\rm{(ii)}] $f\in L^p(\rn)$ and there exists $g\in L^p(\rn)$ such that $\CS(f,g)\in L^p(\rn)$;
  \item[\rm{(iii)}] $f\in L^p(\rn)$ and there exists $g\in L^p(\rn)$ such that $\CG_\lz^\ast(f,g)\in L^p(\rn)$ for some  $\lz\in(2/p,\fz)$.
\end{enumerate}

Moreover, if $f\in W^{2,p}(\rn)$,
then $g$ in (ii) and (iii) can be taken as $g:=\frac{\lap f}{2n+4}$; while
if either of (ii) and (iii) holds true, then $g:=\frac{\lap f}{2n+4}$ almost everywhere.
In any of above cases, $\|\CS(f,g)\|_{L^p(\rn)}$ and $\lf\|\CG_\lz^*(f,g)\r\|_{L^p(\rn)}$
are equivalent to $\|\lap f\|_{L^p(\rn)}$.

\item[{\rm(II)}] If $p\in(1,\f{2n}{4+n})$,
then the equivalence between (i) and
either (ii) or (iii) in (I) no longer  holds true.
\end{itemize}
\end{theorem}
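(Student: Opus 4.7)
The plan is to follow the three-step pattern of Theorem \ref{Thm0}, but now in the wider range $p\in(2n/(n+4),2)$ in dimension $n\ge 4$, and to disprove the equivalences when $p<2n/(n+4)$.

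Writing $B_tf=\Phi_t\ast f$ with $\Phi:=|B(0,1)|^{-1}\chi_{B(0,1)}$ and $\phi$ defined by $\widehat{\Phi}(\xi)=\phi(|\xi|)$, the Taylor expansion $\phi(s)=1-\frac{s^2}{2n+4}+O(s^4)$ together with the choice $g=\lap f/(2n+4)$ yields
\begin{equation*}
\frac{B_tf(y)-f(y)}{t^2}-B_tg(y)=(K_t\ast h)(y),\qquad h:=-\lap f,\quad \widehat{K_t}(\xi)=\psi(t|\xi|),
\end{equation*}
with $\psi(s):=\frac{\phi(s)-1}{s^2}+\frac{\phi(s)}{2n+4}$ satisfying $\psi(s)=O(s^2)$ as $s\to 0$, $\psi(s)=O(s^{-2})$ as $s\to\fz$, and $\int_0^\fz|\psi(s)|^2\,ds/s<\fz$. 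Thus $\CS(f,\lap f/(2n+4))$ is the Lusin-area function of $h$ with respect to the Littlewood--Paley family $\{K_t\}_{t>0}$, and Plancherel immediately gives $\|\CS(f,\lap f/(2n+4))\|_{L^2(\rn)}\sim\|\lap f\|_{L^2(\rn)}$.

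The heart of the proof is the $L^p$-estimate $\|\CS(f,\lap f/(2n+4))\|_{L^p(\rn)}\sim\|\lap f\|_{L^p(\rn)}$ for $p\in(2n/(n+4),2)$; once it is established, (i)$\Leftrightarrow$(ii) follows after a standard density and duality argument (using Theorem \ref{Thm4} on a dense subclass) to pin down $g=\lap f/(2n+4)$. The key obstacle is that the Bessel decay $|\phi(s)|\ls s^{-(n+1)/2}$ forces $\psi(|\cdot|)\notin L^2(\rn)$ whenever $n\ge 4$, so the vector-valued Calder\'on--Zygmund machinery used for Theorem \ref{Thm0} does not apply verbatim. I would split $\psi=\psi_0+\psi_\fz$ with $\psi_0$ supported in a neighbourhood of $0$ and $\psi_\fz$ in the tail: the small-$s$ piece produces a Schwartz-like kernel and its square function is $L^p$-bounded for every $p\in(1,\fz)$ by classical Littlewood--Paley theory, while the large-$s$ piece, which carries all the offending tail, is controlled by interpolating the $L^2$-bound against an endpoint dominated by the Hardy--Littlewood maximal function of $h$. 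The threshold $p=2n/(n+4)$ is exactly where $\frac1p-\frac12=\frac2n$, matching the two derivatives carried by $\lap$ against the sharp integrability permitted by the $n$-dimensional Bessel decay. The remaining implications are routine: (iii)$\Rightarrow$(ii) is the pointwise inequality $\CS\le C_\lz\CG_\lz^*$ (since $t/(t+|x-y|)\ge1/2$ on $B(x,t)$), and (i)$\Rightarrow$(iii) is obtained by chaining the Lusin-area bound with the Fefferman--Stein tent-space inequality $\|\CG_\lz^*(F)\|_{L^p(\rn)}\ls\|\CS(F)\|_{L^p(\rn)}$ valid for $p\in(1,2)$ and $\lz>2/p$.

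For Part (II), when $p<2n/(n+4)$ the window $n/p-2<\sigma<n/2$ is nonempty; testing the conjectured inequalities on a smoothly-truncated homogeneous profile $f(x):=\varphi(x)|x|^{-\sigma}$, combined with an appropriate scaling/concentration argument, yields sequences $\{f_k\}$ defeating both equivalences (i)$\Leftrightarrow$(ii) and (i)$\Leftrightarrow$(iii). The hardest step overall will be the large-$s$ $L^p$-analysis at the critical exponent in Part (I), where naive pointwise kernel estimates break down and one has to exploit simultaneously the second-order zero of $\psi$ at $0$ and the exact Bessel decay of $\phi$ at $\fz$.
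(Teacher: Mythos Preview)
Your overall architecture is right, but the crucial step---the $L^p$-bound for the large-$s$ part $\psi_\infty$---is not established by what you wrote, and as stated it is actually inconsistent with Part (II). You claim the $\psi_\infty$-contribution is ``controlled by interpolating the $L^2$-bound against an endpoint dominated by the Hardy--Littlewood maximal function of $h$''. But a pointwise (or even weak-type $(1,1)$ with uniform constant) maximal-function control of the $\psi_\infty$-area function would, by interpolation with $L^2$, give $L^p$-boundedness for \emph{every} $p\in(1,2)$, which contradicts your own Part (II). The threshold $2n/(n+4)$ cannot emerge from a single frequency split $\psi=\psi_0+\psi_\infty$; it comes from a \emph{quantitative} interpolation in which the endpoint constant grows. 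The paper does a full Littlewood--Paley decomposition $\widetilde S(h)\lesssim\sum_{j\in\mathbb Z}\mathcal T_j(h)$, where $\mathcal T_j$ couples the ball scale $2^{-k}$ with the frequency scale $2^{j+k}$. For $j\ge 0$ (frequency $\gg$ reciprocal ball radius---this is your $\psi_\infty$ regime) one proves, via vector-valued Calder\'on--Zygmund theory on an $\ell^2$-valued kernel, a weak $(1,1)$ bound with constant $\sim 2^{j(n/2+\beta-2)}$ for any $\beta\in(0,1)$, and an $L^2$ bound with constant $\sim 2^{-2j}$. Marcinkiewicz interpolation then gives $\|\mathcal T_j\|_{L^p\to L^p}\lesssim 2^{-j[2-n(1/p-1/2)-\varepsilon]}$, which is summable in $j\ge 0$ exactly when $p>2n/(n+4)$. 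This growth-versus-decay balance is the mechanism you are missing.

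For Part (II) the paper does not use truncated powers $|x|^{-\sigma}$; it tests the putative bound $\|\widetilde S(h)\|_{L^p}\lesssim\|h\|_{L^p}$ on dilations $\phi_j(\cdot)=2^{jn}\phi(2^j\cdot)$ of a Schwartz function with annular Fourier support, and compares $\|J_{j,1}\|_{L^p}\gtrsim 2^{j(n/2-2)}$ against $\|\phi_j\|_{L^p}\sim 2^{jn(1-1/p)}$, which forces $p\ge 2n/(n+4)$. Your $|x|^{-\sigma}$ idea might be made to work but you have not said which inequality it violates or how the ``scaling/concentration'' argument runs. Finally, two small remarks: (ii)$\Rightarrow$(i) in the paper is not ``density and duality'' but mollification plus Lebesgue differentiation (one shows $g_\varepsilon=\lap f_\varepsilon/(2n+4)$ pointwise and passes to the limit); and you should also say a word about the reverse bound $\|\lap f\|_{L^p}\lesssim\|\CS(f,g)\|_{L^p}$, which needs a separate argument (in the paper it is reduced to the Lusin-area characterization of $\dot F^2_{p,2}$ and a Fefferman--Stein maximal inequality).
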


We remark that (ii) and (iii) in Theorem \ref{Thm2}(I) are equivalent for all $p\in(1,\fz)$ and $n\in\mathbb{N}$ (see Lemma \ref{e34x} or the proof of Theorem \ref{Thm2} below). Moreover, from Theorem \ref{Thm2}(II), we deduce that the condition $p\in (\frac{2n}{4+n},\fz)$ in Theorem \ref{Thm2}(I)
is nearly sharp in the sense that, if (i) in Theorem \ref{Thm2}(I) is equivalent to either (ii) or (iii) of Theorem \ref{Thm2}(I), then one must have $p\in [\frac{2n}{4+n},\fz)$.
This also implies that the equivalence between (i) in Theorems \ref{Thm0}(I) and \ref{Thm2}(II) and either
of (ii) and (iii) holds true for all $p\in(1,\fz)$ if and only if $n\leq4$. We also point out that,
in Theorem \ref{Thm2}, the case when $p=\f{2n}{4+n}$ is still unclear.

As the fractional variant of Theorem \ref{Thm0} and Theorem \ref{Thm2}, we have the
following characterizations for $W^{\az,p}(\rn)$ with $\az\in(0,2)$ and $p\in(1,\fz)$.

\begin{theorem}\label{Thm1}
Let $n\in\nn$ and $\alpha\in(0,2)$.

\begin{itemize}
\item[{\rm(I)}] If $p\in(\max\{1, \f{2n}{2\al+n}\},\fz)$, then the following statements are equivalent:
\begin{enumerate}
  \item[\rm{(i)}] $f\in W^{\alpha,p}(\rn)$;
  \item[\rm{(ii)}]  $f\in L^p(\rn)$ and $\CS_\al(f)\in L^p(\rn)$;
  \item[\rm{(iii)}] $f\in L^p(\rn)$ and $\CG_{\al,\lz}^\ast(f)\in L^p(\rn)$ for some  $\lz\in(\max\{1,2/p\},\fz)$.
\end{enumerate}

Moreover, $\|\CS_\al(f)\|_{L^p(\rn)}$ and $\|\CG_{\al,\lz}^*(f)\|_{L^p(\rn)}$
are equivalent to $\|(-\lap)^{\al/2}f\|_{L^p(\rn)}$.

\item[\rm(II)] If $n>2\al$ and $p\in(1,\f{2n}{2\al+n})$, then the equivalence between (i) and
either of (ii) or (iii) in (I) no longer holds true.
\end{itemize}
\end{theorem}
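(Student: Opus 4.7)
The plan is to reduce the equivalences in part~(I) to the $g$-function characterization of $W^{\al,p}(\rn)$ in Theorem~\ref{Thm3}, by establishing the norm equivalences
\begin{equation*}
\|\cg_\al(f)\|_{L^p(\rn)}\sim\|\CS_\al(f)\|_{L^p(\rn)}\sim\|\CG_{\al,\lz}^*(f)\|_{L^p(\rn)}
\end{equation*}
for $p>\max\{1,2n/(2\al+n)\}$ and $\lz>\max\{1,2/p\}$. Part~(II) will follow by an explicit counterexample adapted to the sharpness of the Sobolev embedding at the exponent $2n/(2\al+n)$.

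The direction (iii)$\Rightarrow$(ii) follows from the pointwise inequality $\CS_\al(f)(x)\le 2^{\lz n/2}\CG_{\al,\lz}^*(f)(x)$, obtained by restricting the outer integration in $\CG_{\al,\lz}^*(f)(x)$ to $B(x,t)$ where the weight $(t/(t+|x-y|))^{\lz n}\ge 2^{-\lz n}$. For the bound $\|\CG_{\al,\lz}^*(f)\|_{L^p(\rn)}\ls\|\cg_\al(f)\|_{L^p(\rn)}$ needed for (i)$\Rightarrow$(iii), I would decompose $\rn$ at each height $t$ into dyadic annuli $\{2^{k-1}t\le|x-y|<2^kt\}$, $k\ge 0$, on which the weight $(t/(t+|x-y|))^{\lz n}$ is bounded by $C2^{-k\lz n}$, giving the pointwise bound
\begin{equation*}
\CG_{\al,\lz}^*(f)(x)^2\ls\sum_{k=0}^\fz 2^{-k\lz n}\,\CS_\al^{(2^k)}(f)(x)^2,
\end{equation*}
where $\CS_\al^{(a)}$ denotes the Lusin area function with cone aperture $a$. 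A standard change-of-aperture inequality for tent spaces then makes the series converge provided $\lz>\max\{1,2/p\}$, reducing this step to the equivalence $\|\CS_\al(f)\|_{L^p(\rn)}\sim\|\cg_\al(f)\|_{L^p(\rn)}$.

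The crux is therefore this last equivalence, where the Sobolev-type threshold $p>2n/(2\al+n)$ enters. Setting $\phi(y,t):=(B_tf(y)-f(y))/t^\al$, Fubini gives the exact $L^2$ identity $\|\CS_\al(f)\|_{L^2(\rn)}=c_n\|\cg_\al(f)\|_{L^2(\rn)}$. For $p\ge 2$, the duality bound $\int_\rn\CS_\al(f)^2\,h\ls\int_\rn\cg_\al(f)^2\,\mathcal{M}(h)$ (by Fubini) combined with the Hardy--Littlewood maximal inequality on $L^{(p/2)'}(\rn)$ yields $\|\CS_\al(f)\|_{L^p}\ls\|\cg_\al(f)\|_{L^p}$, with the reverse direction following from a parallel vector-valued Calder\'on--Zygmund argument; the threshold does not bite here since $2n/(2\al+n)<2$. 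The main obstacle is the sub-$L^2$ range $p<2$: starting from the pointwise splitting
\begin{equation*}
|\phi(x,t)|^2\le 2\,\dashint_{B(x,t)}|\phi(y,t)|^2\,dy+2\,\dashint_{B(x,t)}|\phi(y,t)-\phi(x,t)|^2\,dy
\end{equation*}
and integrating against $dt/t$ reduces $\cg_\al(f)(x)^2$ to $C\CS_\al(f)(x)^2+R_\al(f)(x)^2$; controlling the remainder $R_\al(f)$ by expanding $B_tf(y)-B_tf(x)$ through $\chi_{B(y,t)}-\chi_{B(x,t)}$, applying the Fefferman--Stein vector-valued maximal inequality, and invoking the Sobolev embedding of $\cg_\al(f)$ into $L^{2n/(n-2\al)}(\rn)$ produces precisely the restriction $p>2n/(2\al+n)$.

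For part~(II), I would test with $f(x):=|x|^{-\b}\eta(x)$ for a smooth cutoff $\eta$ supported near the origin and an exponent $\b$ chosen so that $f\in L^p(\rn)\setminus W^{\al,p}(\rn)$; in the range $p<2n/(2\al+n)$, a direct computation of the averages $B_tf$ near the origin, exploiting the built-in $y$-averaging of $\CS_\al$, will show $\CS_\al(f)\in L^p(\rn)$, contradicting the equivalence of (i) and (ii) in this range.
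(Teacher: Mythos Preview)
Your reduction of (ii)$\Leftrightarrow$(iii) to aperture-change for tent spaces is fine and matches the paper's Lemma~\ref{e34x}. The genuine problem is that you have misidentified which inequality carries the threshold $p>\frac{2n}{2\al+n}$, and consequently both your argument for (I) in the range $p<2$ and your counterexample for (II) are aimed at the wrong direction.

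\textbf{The gap in (I).} Your duality argument for $p\ge 2$ handles $\|\CS_\al(f)\|_{L^p(\rn)}\ls\|\cg_\al(f)\|_{L^p(\rn)}$, but for $p<2$ you only sketch the \emph{reverse} inequality $\cg_\al(f)^2\ls\CS_\al(f)^2+R_\al(f)^2$, never the forward one. In fact the paper shows that the reverse inequality $\|(-\lap)^{\al/2}f\|_{L^p(\rn)}\ls\|\CS_\al(f)\|_{L^p(\rn)}$ holds for \emph{all} $p\in(1,\fz)$ (via the Lusin-area characterization of $\dot F^\al_{p,2}(\rn)$ and the Fefferman--Stein inequality), so no threshold appears there; your remainder argument is attacking a direction that is not the obstruction. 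The inequality that actually requires $p>\frac{2n}{2\al+n}$ is $\|\CS_\al(f)\|_{L^p(\rn)}\ls\|(-\lap)^{\al/2}f\|_{L^p(\rn)}$ when $p<2$, and your proposal contains no mechanism for this. The paper obtains it by writing $\CS_\al(f)=\wt S((-\lap)^{\al/2}f)$ with a convolution kernel $K_t$, performing a Littlewood--Paley decomposition $\wt S(g)\ls\sum_{j\in\mathbb Z}T_j(g)$, and proving exponential operator-norm decay $\|T_j\|_{L^p\to L^p}\ls 2^{-\d|j|}$ through a combination of Plancherel ($L^2$), pointwise maximal bounds ($j\le 0$), and a weak-type $(1,1)$ estimate coming from vector-valued Calder\'on--Zygmund theory applied to an explicit $L^2(\Sigma)$-valued kernel ($j\ge 0$); the threshold emerges from the growth rate of this weak-type constant in $j$.

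\textbf{The gap in (II).} Because (ii)$\Rightarrow$(i) holds for all $p\in(1,\fz)$, there is \emph{no} $f\in L^p(\rn)$ with $\CS_\al(f)\in L^p(\rn)$ but $f\notin W^{\al,p}(\rn)$; your proposed counterexample $f=|x|^{-\b}\eta$ therefore cannot work. What fails for $p<\frac{2n}{2\al+n}$ is (i)$\Rightarrow$(ii): the paper exhibits Schwartz functions $\phi_j$ with $\widehat{\phi_j}$ supported in $\{|\xi|\sim 2^j\}$ and shows by direct Fourier-side estimates that $\|\wt S(\phi_j)\|_{L^p(\rn)}/\|\phi_j\|_{L^p(\rn)}\gtrsim 2^{j(n/2-\al-n(1-1/p))}\to\fz$, which disproves the bound $\|\CS_\al(f)\|_{L^p(\rn)}\ls\|(-\lap)^{\al/2}f\|_{L^p(\rn)}$ in this range.
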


We remark that (ii) and (iii) of Theorem \ref{Thm1}(I) are equivalent
for all $p\in(1,\fz)$ and $n\in\mathbb{N}$ (see Lemma \ref{e34x} or
the proof of Theorem \ref{Thm1} below). Moreover, the condition
$p\in(\max\{1, \f{2n}{2\al+n}\},\fz)$ is also nearly sharp
in the sense that, if (i) of Theorem \ref{Thm1}(I) is equivalent to either (ii) or (iii) of
Theorem \ref{Thm1}(I), then one has
$$p\in \lf[\frac{2n}{2\al+n},\fz\r)\ {\rm for}\ n>2\al\ \quad
{\rm and}\quad \ p\in (1,\fz)\ {\rm for}\ n\leq2\al;$$ see Theorem \ref{Thm1}(II).
We also point out that, in Theorem \ref{Thm1}, the case when $n>2\al$ and $p=\f{2n}{2\al+n}$
is still unclear.

The proofs of Theorems \ref{Thm2} and  \ref{Thm1} are respectively
presented in Section \ref{s3} and Section  \ref{s2}.
Comparing with the proofs of \cite[Theorems 1, 2 and 3]{AMV} and \cite[Theorem 1.1]{hyy15} (Theorem \ref{Thm0}),
the proofs of Theorems \ref{Thm2} and  \ref{Thm1} are more subtle and complicated.
The main new idea is to control the Lusin area functions
$\CS(f,g)$ and $\CS_\al(f)$ by a sum of a sequence of  convolution operators
whose kernels satisfy vector-valued H\"ormander conditions.
Then, by applying the vector-valued Calder\'on-Zygmund theory
(see \cite[Theorem 3.4]{G-CF}) and
 the Marcinkiewicz interpolation theorem (see \cite[Theorem 1.3.2]{lg08}),
we obtain the boundedness of all such   convolution operators
on $L^p(\rn)$ as well as the exact decay estimates of their operator norms,
which implies the desired boundedness of Lusin area functions.
On the other hand,
in the proofs of \cite[Theorems 1, 2 and 3]{AMV}, the norm estimates
$\|(-\lap)^{\al/2}f\|_{L^p(\rn)}\ls \|\CG_\al(f)\|_{L^p(\rn)}$ and $\|\lap f\|_{L^p(\rn)}\ls \|\CG(f,g)\|_{L^p(\rn)}$ were obtained via the polarization
and a duality argument, which is not feasible for us to obtain
the norm estimates that $\|(-\lap)^{\al/2}f\|_{L^p(\rn)}\ls \|\CS_\al(f)\|_{L^p(\rn)}$,
$\|(-\lap)^{\al/2}f\|_{L^p(\rn)}\ls \|\CG^*_{\al,\lz}(f)\|_{L^p(\rn)}$
and  $\|\lap f\|_{L^p(\rn)}\ls \|\CS(f,g)\|_{L^p(\rn)}$,
$\|\lap f\|_{L^p(\rn)}\ls \|\CG^*_{\lz}(f,g)\|_{L^p(\rn)}$ in Theorems \ref{Thm2} and  \ref{Thm1}, respectively.
To overcome this difficulty, we make use of the fact
$\dot W^{\alpha,p}(\rn)=\dot F^\alpha_{p,2}(\rn)$ and prove that $\|f\|_{\dot F^\alpha_{p,2}(\rn)}\ls \|\CS_{\al}(f)\|_{L^p(\rn)}$ with $\alpha\in(0,2)$ and $\|f\|_{\dot F^2_{p,2}(\rn)}\ls \|\CS(f,g)\|_{L^p(\rn)}$
by means of the classical Lusin area function characterization of the Triebel-Lizorkin space
$\dot F^\alpha_{p,2}(\rn)$ (see, for example, \cite{u12})
 and the Fefferman-Stein vector-valued inequality from \cite{fs}.

Notice that the characterizations of fractional Sobolev spaces in (ii) and (iii)
of Theorems \ref{Thm1}(I) and \ref{Thm2}(I) are independent of the differential structure of
$\rn$. In this sense, these characterizations provide a new possible way to
introduce fractional Sobolev spaces with smoothness order in $(1,2]$ on metric measure
spaces.

Finally, we make some conventions on notation. Denote by
$L^1_\loc(\rn)$ the collection of all
locally integrable functions  on $\rn$.
Let  $\mathcal{S}(\rn)$ denote the collection of all
\emph{Schwartz functions} on $\rn$, endowed
with the usual topology, and $\mathcal{S}'(\rn)$ its \emph{topological dual}, namely,
the collection of all bounded linear functionals on $\mathcal{S}(\rn)$
endowed with the weak $\ast$-topology.
Let $\mathbb{Z}_+:=\mathbb{N}\cup\{0\}$ and,
for all $\az\in\mathbb{Z}_+^n$, $m\in\mathbb{Z}_+$ and $\vi\in\cs(\rn)$,  let
$$\|\vi\|_{\az,m}:=\sup_{|\bz|\le |\az|,\,x\in\rn}(1+|x|)^m |\partial^\bz \vi(x)|.$$

The symbol $\widehat{\varphi}$ refers to  the \emph{Fourier transform},
 and $\varphi^{\vee}$ to its \emph{inverse transform},
both defined on $\cs'(\rn)$.
For any $\vi\in \cs(\rn)$ and $t\in(0,\fz)$, we let $\vi_t(\cdot):=t^{-n}\vi(\cdot/t)$.
For any $E\subset\rn$, let $\chi_E$ be its \emph{characteristic function}.

The symbol $C$  denotes a {\it positive
constant}, which is independent of the main parameters involved but
whose value may differ from line to line, and the symbol $C_{(\al,\,\bz,\ldots)}$ denotes
a positive constant depending on the parameters $\al,\,\bz,\ldots$.
We use the symbol $f\ls g$ to denote that there exists a positive constant $C$ such that  $f\le
C g$ and, if $f\ls g\ls f$, we then write $f\sim g$.

\section{Proof of Theorem \ref{Thm2}\label{s3}}

\hskip\parindent
To prove Theorem \ref{Thm2},
we first consider the relations among the Littlewood-Paley $g$-functions in \eqref{g-fn} and \eqref{g-fn1},
the Lusin area functions in \eqref{s-fn} and \eqref{s-fn1}  and the Littlewood-Paley $g_\lz^\ast$-functions in \eqref{gl-fn1} and \eqref{gl-fn}, respectively.
To this end, for all $\lz\in(1,\fz)$, measurable functions $\mathcal{F}$ on $\rn\times (0,\fz)$ and $x\in\rn$, define
\begin{eqnarray*}
  \mathcal{G}(\mathcal{F})(x) &:=& \lf\{\int_0^\fz |\mathcal{F}(x,t)|^2\,\frac{dt}t\r\}^{\frac12}, \nonumber\\
  \mathcal{S}(\mathcal{F})(x) &:=& \lf\{\int_0^\fz \int_{B(x,t)} |\mathcal{F}(y,t)|^2 \,dy\, \frac{dt}{t^{n+1}}\r\}^{\frac12}\nonumber
\end{eqnarray*}
and
$$ \mathcal{G}_{\lz}^\ast(\mathcal{F})(x) := \lf\{\int_0^\fz \int_\rn |\mathcal{F}(y,t)|^2
  \lf(\frac t {t+|x-y|}\r)^{\lz n} \,dy\,
  \frac{dt}{t^{n+1}}\r\}^{\frac12}.$$

Concerning the relationships among the functions $\mathcal{G}(\mathcal{F})$, $\mathcal{S}(\mathcal{F})$
 and $\mathcal{G}_{\lz}^\ast(\mathcal{F})$, we have the following two lemmas, which
 come from \cite[Lemmas 2.1(iii) and 2.2]{hyy15}, respectively.

\begin{lemma}\label{lem-2-3}
Let $\lz\in(1,\fz)$ and $p\in[2,\fz)$.
Then, for all measurable functions
$\mathcal{F}$ on $\rn\times (0,\fz)$,
$\|\mathcal{G}_\lz^\ast(\mathcal{F})\|_{L^p(\rn)}\le C\|\mathcal{G}(\mathcal{F})\|_{L^p(\rn)}$, where $C$ is a positive constant independent of $\mathcal{F}$.
\end{lemma}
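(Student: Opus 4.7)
The plan is to use the standard duality trick that underpins all $g_\lambda^*$-type inequalities in the range $p\in[2,\infty)$. Since $p/2\ge 1$, I would begin by writing
\[
\|\mathcal{G}_\lz^\ast(\mathcal{F})\|_{L^p(\rn)}^2
=\lf\|[\mathcal{G}_\lz^\ast(\mathcal{F})]^2\r\|_{L^{p/2}(\rn)}
=\sup_{h}\int_\rn [\mathcal{G}_\lz^\ast(\mathcal{F})(x)]^2\,h(x)\,dx,
\]
where the supremum is taken over all nonnegative $h\in L^{(p/2)'}(\rn)$ with $\|h\|_{L^{(p/2)'}(\rn)}\le 1$. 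Swapping the order of integration via Fubini on the right-hand side reduces the task to bounding the $x$-integral of the Poisson-type kernel $(t/(t+|x-y|))^{\lz n}$ against $h$.

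The key pointwise estimate I would then establish is
\[
\int_\rn \lf(\frac{t}{t+|x-y|}\r)^{\lz n} h(x)\,dx\le C\, t^n M(h)(y),
\]
valid precisely because $\lz>1$, where $M$ is the Hardy-Littlewood maximal operator. This is a textbook annular decomposition: split $\rn$ into $B(y,t)$ and the dyadic annuli $\{2^{k-1}t\le |x-y|<2^k t\}$ for $k\ge 1$, bound the kernel by $1$ on the ball and by $2^{-k\lz n}$ on the $k$-th annulus, and then sum the geometric series in $k$, using $(\lz-1)n>0$.

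Inserting this estimate and recognising the resulting $t$-integral as $[\mathcal{G}(\mathcal{F})(y)]^2$ yields
\[
\int_\rn [\mathcal{G}_\lz^\ast(\mathcal{F})(x)]^2\,h(x)\,dx
\le C\int_\rn [\mathcal{G}(\mathcal{F})(y)]^2\,M(h)(y)\,dy.
\]
Applying H\"older's inequality with exponents $p/2$ and $(p/2)'$, followed by the $L^{(p/2)'}$-boundedness of $M$ (which is available because $p\ge 2$ forces $(p/2)'>1$), gives
\[
\int_\rn [\mathcal{G}_\lz^\ast(\mathcal{F})(x)]^2\,h(x)\,dx
\le C\|\mathcal{G}(\mathcal{F})\|_{L^p(\rn)}^2\,\|h\|_{L^{(p/2)'}(\rn)}.
\]
Taking the supremum over admissible $h$ then yields the claim.

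The only delicate point is the restriction $\lz>1$ in the kernel estimate; this is exactly what makes the geometric series in the annular decomposition converge and is sharp in this proof scheme. The restriction $p\ge 2$ is used only to legitimately dualise $L^{p/2}$ and to ensure $(p/2)'\in(1,\infty)$ so that the maximal function is bounded; the argument intrinsically breaks below $p=2$, which is consistent with the fact that the companion result for $p\in(1,2)$ (not needed here) requires the extra assumption $\lz>2/p$.
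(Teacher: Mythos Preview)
Your argument is correct and is the classical duality proof for $g_\lz^\ast$-type inequalities in the range $p\in[2,\fz)$. The paper does not give its own proof of this lemma; it simply cites \cite[Lemma 2.1(iii)]{hyy15}, whose proof is exactly the scheme you outline. One minor imprecision: when $p=2$ you have $(p/2)'=\fz$, not $(p/2)'>1$, but this is harmless since $M$ is trivially bounded on $L^\fz(\rn)$ (or, equivalently, the case $p=2$ follows directly from Fubini without dualising at all).
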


\begin{lemma}\label{e34x}
Let $p\in(1,\fz)$ and $\lz\in(\max\{2/p,1\},\fz)$. Then, for all
measurable functions $\mathcal{F}$ on $\rn\times (0,\fz)$,
$\mathcal{S}(\mathcal{F})\in L^p(\rn)$ if and only if $\mathcal{G}_\lz^\ast(\mathcal{F})\in L^p(\rn)$. Moreover,
the $ L^p(\rn)$-norm of $\mathcal{G}_\lz^\ast(\mathcal{F})$ is equivalent to that of $\mathcal{S}(\mathcal{F})$
with the equivalent positive constants independent of $\mathcal{F}$.
\end{lemma}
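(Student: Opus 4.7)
The easy inclusion $\|\CS(\mathcal{F})\|_{L^p(\rn)}\ls\|\cg_\lz^\ast(\mathcal{F})\|_{L^p(\rn)}$ follows from a pointwise bound: in $\cg_\lz^\ast(\mathcal{F})(x)$, restrict the $y$-integration to $|x-y|<t$, on which $(\f{t}{t+|x-y|})^{\lz n}\ge 2^{-\lz n}$, so that $\CS(\mathcal{F})(x)\ls\cg_\lz^\ast(\mathcal{F})(x)$ for every $x\in\rn$, and the $L^p$-inequality follows at once.

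For the reverse inequality I would decompose the $y$-space into dyadic annuli about $x$. Setting $A_0(x,t):=B(x,t)$ and, for $k\ge 1$, $A_k(x,t):=B(x,2^kt)\setminus B(x,2^{k-1}t)$, on $A_k(x,t)$ one has $(\f{t}{t+|x-y|})^{\lz n}\le C\,2^{-k\lz n}$. Hence
\begin{equation*}
\lf[\cg_\lz^\ast(\mathcal{F})(x)\r]^2
\ls\sum_{k=0}^\fz 2^{-k\lz n}\lf[S_{2^k}(\mathcal{F})(x)\r]^2,
\end{equation*}
where
\begin{equation*}
S_{2^k}(\mathcal{F})(x):=\lf\{\int_0^\fz\int_{B(x,2^kt)}|\mathcal{F}(y,t)|^2\,dy\,\f{dt}{t^{n+1}}\r\}^{1/2}
\end{equation*}
is the Lusin area function of aperture $2^k$.

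The main obstacle is the change-of-aperture estimate
\begin{equation*}
\|S_{2^k}(\mathcal{F})\|_{L^p(\rn)}
\le C\,2^{kn\max\{1/2,\,1/p\}}\,\|\CS(\mathcal{F})\|_{L^p(\rn)},\qquad k\in\mathbb{Z}_+,\ p\in(1,\fz).
\end{equation*}
For $p=2$ this is a direct Fubini computation yielding the sharp identity $\|S_{2^k}(\mathcal{F})\|_{L^2(\rn)}=2^{kn/2}\|\CS(\mathcal{F})\|_{L^2(\rn)}$. For general $p\in(1,\fz)$ it follows from the Fefferman-Stein good-$\lz$ inequality relating $S_{2^k}(\mathcal{F})$, $\CS(\mathcal{F})$ and the Hardy-Littlewood maximal operator, in the spirit of the classical change-of-aperture principle for Lusin area functions.

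It remains to sum in $k$. When $p\in[2,\fz)$, $\|\cdot\|_{L^{p/2}(\rn)}$ is a genuine norm, so Minkowski gives
\begin{equation*}
\|\cg_\lz^\ast(\mathcal{F})\|_{L^p(\rn)}^2
\ls\sum_{k=0}^\fz 2^{-k\lz n}\|S_{2^k}(\mathcal{F})\|_{L^p(\rn)}^2
\ls\|\CS(\mathcal{F})\|_{L^p(\rn)}^2\sum_{k=0}^\fz 2^{kn(1-\lz)},
\end{equation*}
which converges precisely when $\lz>1$. When $p\in(1,2)$, $\|\cdot\|_{L^{p/2}(\rn)}$ is only a $(p/2)$-quasinorm with $\|\sum_k h_k\|_{L^{p/2}}^{p/2}\le\sum_k\|h_k\|_{L^{p/2}}^{p/2}$; the same scheme now produces the series $\|\CS(\mathcal{F})\|_{L^p(\rn)}^p\sum_k 2^{kn(1-\lz p/2)}$, which converges precisely when $\lz>2/p$. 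In both regimes the hypothesis $\lz>\max\{2/p,1\}$ yields $\|\cg_\lz^\ast(\mathcal{F})\|_{L^p(\rn)}\ls\|\CS(\mathcal{F})\|_{L^p(\rn)}$, which combined with the easy direction completes the proof.
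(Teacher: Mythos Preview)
Your argument is correct. The paper does not prove this lemma but merely quotes it from \cite[Lemma~2.2]{hyy15}, so there is no in-paper proof to compare against; your route via the pointwise bound for the easy direction and a dyadic-annuli decomposition plus the change-of-aperture estimate for the converse is in fact the standard one and is what the proof in \cite{hyy15} amounts to. One remark on presentation: the change-of-aperture bound $\|S_{2^k}(\mathcal{F})\|_{L^p(\rn)}\ls 2^{kn/\min\{p,2\}}\|\CS(\mathcal{F})\|_{L^p(\rn)}$ you invoke holds for arbitrary measurable $\mathcal{F}$ because it is really a statement about the positive measure $d\mu(y,t):=|\mathcal{F}(y,t)|^2\,dy\,dt/t^{n+1}$ on $\rn\times(0,\fz)$ (the Coifman--Meyer--Stein tent-space change-of-angle principle), so you might phrase the citation in those terms rather than as a ``Fefferman--Stein good-$\lambda$'' inequality, which in its classical form concerns harmonic extensions.
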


Applying Lemma \ref{lem-2-3}, we obtain the following conclusions.

\begin{lemma}\label{Lem p>2}
Let $p\in[2,\fz)$ and $\lz\in(1,\fz)$.

{\rm (i)} If $\al\in(0,2)$, then there exist positive constants $C_1$ and $C_2$ such that,
for all $f\in W^{\al,p}(\rn)$, $$\|\mathcal{S}_\al(f)\|_{L^p(\rn)}\le C_1\|\CG_{\al,\lz}^\ast(f)\|_{L^p(\rn)}
\le C_2\|(-\lap)^{\al/2}f\|_{L^p(\rn)}.$$ In particular, for all $f\in W^{\al,2}(\rn)$,
$\|\CS_\al(f)\|_{L^2(\rn)}$ and $\|\CG_{\al,\lz}^\ast(f)\|_{L^2(\rn)}$ are both equivalent to
$\|(-\lap)^{\al/2}f\|_{L^2(\rn)}$ with the equivalent positive constants independent of $f$.

{\rm (ii)} There exist positive constants $C_3$ and $C_4$ such that,
for all $f\in W^{2,p}(\rn)$, $$\|\mathcal{S}(f,g)\|_{L^p(\rn)}\le C_3\|\CG_{\lz}^\ast(f,g)\|_{L^p(\rn)}
\le C_4\|\lap f\|_{L^p(\rn)},$$ where $g:=\f{\lap f}{2n+4}$. In particular, for all $f\in W^{2,2}(\rn)$,
$\|\CS(f,g)\|_{L^2(\rn)}$ and $\|\CG_{\lz}^\ast(f,g)\|_{L^2(\rn)}$ are both equivalent to
$\|\lap f\|_{L^2(\rn)}$ with the equivalent positive constants independent of $f$.
\end{lemma}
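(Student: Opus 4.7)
The plan is to combine three elementary ingredients with the earlier cited results. The first is a trivial \emph{pointwise} bound: for any measurable $\mathcal{F}$ on $\rn\times(0,\infty)$, any $\lz\in(1,\fz)$ and any $x\in\rn$, restriction of the integral in $\mathcal{G}_\lz^\ast(\mathcal{F})(x)$ to the Lusin cone $\{(y,t):|y-x|<t\}$ together with the lower bound $\bigl(\tfrac{t}{t+|x-y|}\bigr)^{\lz n}\ge 2^{-\lz n}$ there yields
\begin{equation*}
\mathcal{S}(\mathcal{F})(x)\leq 2^{\lz n/2}\,\mathcal{G}_\lz^\ast(\mathcal{F})(x).
\end{equation*}
Specialising $\mathcal{F}(y,t):=(B_t f(y)-f(y))/t^\al$ for (i) and $\mathcal{F}(y,t):=(B_t f(y)-f(y))/t^2-B_t g(y)$ for (ii) gives the first inequality in each statement.

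The second ingredient is Lemma \ref{lem-2-3}, applied to the same choices of $\mathcal{F}$: for $p\in[2,\fz)$ it provides
\begin{equation*}
\|\CG_{\al,\lz}^\ast(f)\|_{L^p(\rn)}\ls \|\CG_\al(f)\|_{L^p(\rn)},\qquad \|\CG_\lz^\ast(f,g)\|_{L^p(\rn)}\ls \|\CG(f,g)\|_{L^p(\rn)},
\end{equation*}
where $\CG_\al(f)$ and $\CG(f,g)$ are the $g$-functions defined in \eqref{g-fn} and \eqref{g-fn1}. The third ingredient is Theorems \ref{Thm3} and \ref{Thm4}. Since $\CG_\al(f)=\widetilde{\CS}_\al(f)$ pointwise, Theorem \ref{Thm3} gives $\|\CG_\al(f)\|_{L^p(\rn)}\sim \|(-\lap)^{\al/2}f\|_{L^p(\rn)}$. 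For (ii) one checks, using $\dashint_{B(x,t)}|y-x|^2\,dy=\frac{n}{n+2}t^2$, the pointwise identity $\CG(f,\lap f/(2n+4))=\widetilde{\CS}(f,\lap f/(2n))$; Theorem \ref{Thm4} then gives $\|\CG(f,g)\|_{L^p(\rn)}\sim \|\lap f\|_{L^p(\rn)}$. Concatenating the three steps yields the two displayed inequality chains.

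For the $p=2$ statement, Fubini's theorem gives, for any measurable $\mathcal{F}$ on $\rn\times(0,\fz)$,
\begin{equation*}
\|\mathcal{S}(\mathcal{F})\|_{L^2(\rn)}^2=c_n\,\|\mathcal{G}(\mathcal{F})\|_{L^2(\rn)}^2,\qquad \|\mathcal{G}_\lz^\ast(\mathcal{F})\|_{L^2(\rn)}^2=c_{n,\lz}\,\|\mathcal{G}(\mathcal{F})\|_{L^2(\rn)}^2,
\end{equation*}
where $c_n:=|B(0,1)|$ and $c_{n,\lz}:=\int_\rn (1+|u|)^{-\lz n}\,du<\fz$ (finite since $\lz>1$); the second identity follows from the change of variables $u=(x-y)/t$ after interchanging the order of integration. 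Combined with Theorem \ref{Thm3} (resp.\ Theorem \ref{Thm4}) at $p=2$, these identities give that $\|\CS_\al(f)\|_{L^2(\rn)}$ and $\|\CG_{\al,\lz}^\ast(f)\|_{L^2(\rn)}$ are both equivalent to $\|(-\lap)^{\al/2}f\|_{L^2(\rn)}$, and analogously for (ii).

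The only non-routine point in the argument is the bookkeeping in (ii): one must verify that the choice $g=\lap f/(2n+4)$ in \eqref{g-fn1} matches the normalisation $g=\lap f/(2n)$ of Theorem \ref{Thm4} through the mean-value identity for $|y-x|^2$; once this pointwise identification is made, everything else reduces to invoking the black boxes listed above, with no new estimates required.
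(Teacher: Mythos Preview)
Your proof is correct and follows essentially the same route as the paper's: apply Lemmas \ref{lem-2-3} and \ref{e34x} (or, as you do, the even simpler pointwise domination $\mathcal{S}(\mathcal{F})\le 2^{\lz n/2}\mathcal{G}_\lz^\ast(\mathcal{F})$) to reduce to the $g$-functions $\CG_\al(f)$ and $\CG(f,g)$, then invoke Theorems \ref{Thm3} and \ref{Thm4}; for $p=2$ use Fubini. Your explicit verification of the normalisation $\CG(f,\lap f/(2n+4))=\widetilde{\CS}(f,\lap f/(2n))$ via $\dashint_{B(x,t)}|y-x|^2\,dy=\tfrac{n}{n+2}t^2$ is a helpful detail the paper leaves implicit.
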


\begin{proof} We first prove (i).
Let $f\in W^{\al,p}(\rn)$. Applying Lemmas \ref{e34x} and \ref{lem-2-3} with
\begin{equation*}
\mathcal{F}:=\mathcal{F}_\al(x,t):=\lf|\frac{B_tf(x)-f(x)}{t^\al}\r|,\quad (x,t)\in\rn\times (0,\fz)
\end{equation*}
and \cite[Theorems 1 and 3]{AMV}, we see that
\begin{eqnarray*}
\lf\|\CS_\al(f)\r\|_{L^p(\rn)}&&=\lf\|\mathcal{S}(\mathcal{F})\r\|_{L^p(\rn)}
\ls\lf\|\CG_{\lz}^\ast(\mathcal{F})\r\|_{L^p(\rn)}
\\
&&\sim\lf\|\CG_{\al,\lz}^\ast(f)\r\|_{L^p(\rn)}\lesssim \lf\|\CG_\al(f)\r\|_{L^p(\rn)}\sim
\lf\|(-\lap)^{\al/2}f\r\|_{L^p(\rn)}.
\end{eqnarray*}

In particular, if $p=2$, then, for all $f\in W^{\al,2}(\rn)$, by the Fubini theorem and \cite[(16)]{AMV},
we have
\begin{eqnarray*}
  \|\CS_\al(f)\|_{L^2(\rn)}^2 &=&\int_\rn\int_0^\fz \int_{B(x,t)}\lf|\frac{B_tf(y)-f(y)}{t^\al}\r|^2 \,dy\,\frac{dt}{t^{n+1}}\,dx\\
   &=&\int_\rn\int_0^\fz\int_{B(y,t)}\lf|\frac{B_tf(y)-f(y)}{t^\al}\r|^2 \,dx \,\frac{dt}{t^{n+1}}\, dy\\
   &=&\widetilde{c}\|\CG_\al(f)\|_{L^2(\rn)}^2
   =c\lf\|(-\lap)^{\al/2}f\r\|_{L^2(\rn)}^2,
\end{eqnarray*}
where $\widetilde{c}$ and $c$ are positive constants depending only on $n$. This proves (i).

Now we prove (ii). This time, by the same argument as that used in (i) with
\begin{equation*}
\mathcal{F}:=\mathcal{F}(x,t):=\lf|\frac{B_tf(x)-f(x)}{t^2}-B_tg(x)\r|,\quad (x,t)\in\rn\times (0,\fz),
\end{equation*}
we obtain the desired conclusion of (ii). This finishes the proof of Lemma \ref{Lem p>2}.
\end{proof}
 For all $x\in\rn$, let
 \begin{align*}
    \mathcal{K}(x) :=\chi\ast I_2(x) -I_2(x)+\f1{2n+4}\chi(x),
 \end{align*}
 where $I_2$ is a distribution  defined by $\widehat{I_2}(\cdot):=|\cdot|^{-2}$
 and $\chi:=\f 1 {|B(0,1)|} \chi_{B(0,1)}$. Then
\begin{equation}\label{1-4'}\widehat{\mathcal{K}}(t\xi) = \f{\widehat{\chi}(t\xi)-1}{t^2|\xi|^{2}}+\f{\widehat{\chi}(t\xi)}{2n+4}
 \ \ {\rm and}\ \
 f\ast \mathcal{K}_t (x) = \f {(B_t -I)(f\ast I_2)(x)}{t^2}+\f{B_tf(x)}{2n+4}\end{equation}
 for all $t\in(0,\fz)$ and $\xi,\,x\in\rn$, where $I$ denotes the identity operator.

We have the following conclusions.

\begin{lemma}\label{lem0'}
{\rm(i)}
There exists a positive constant $C$ such that, for all $\xi\in\rn$,
\begin{equation*}
\lf|\widehat{\chi}(\xi)\r|\leq C
\lf(\f 1{1+|\xi|}\r)^{\f {n+1}2}.
\end{equation*}
{\rm(ii)}
  $\f {\widehat{\chi}(\xi) -1}{|\xi|^2}$ is a radial $C^\infty$ function on $\rn$ satisfying that
  \begin{equation*}\sup_{\xi\in\rn}
  \lf| \nabla^i \lf(  \f { \widehat{\chi}(\xi) -1}{|\xi|^2}\r)\r| \leq C <\infty,\   \   \  i\in\mathbb{Z}_+.\end{equation*}
{\rm(iii)}
There exists a positive constant $C$
such that, for all $\xi\in\rn$ with $|\xi|\in[0,1]$,
 \begin{equation*}
 \lf|\f{\widehat{\chi}(\xi)-1}{|\xi|^{2}} +\f1{2n+4}\r|\leq C|\xi|^2.
 \end{equation*}
{\rm(iv)}
Let $n\in[3,\fz)\cap\mathbb{N}$. Then there exists a positive constant $C$
such that, for all $\xi\in\rn$,
 $$|\widehat{\mathcal{K}}(\xi)|\leq C \min\{ |\xi|^{2}, |\xi|^{-2}\}.$$
\end{lemma}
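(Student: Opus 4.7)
My approach is to handle the four parts in order, since (iv) is a consequence of (i)--(iii).

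For part (i), I plan to use the classical formula expressing $\widehat{\chi}$ in terms of a Bessel function. Since $\chi$ is the normalized characteristic function of the unit ball, one has $\widehat{\chi}(\xi)=c_n|\xi|^{-n/2}J_{n/2}(|\xi|)$ for some dimensional constant $c_n$. From the standard asymptotic $|J_{n/2}(r)|\lesssim r^{-1/2}$ valid for all $r\geq 1$, together with the boundedness of $\widehat{\chi}$ (which follows directly from $\chi\in L^1(\rn)$), the estimate in (i) drops out after considering the regions $|\xi|\leq 1$ and $|\xi|\geq 1$ separately.

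For part (ii), I would first note that $\widehat{\chi}$ is a real, radial, $C^\infty$ function on $\rn$. Writing $\widehat{\chi}(\xi)=h(|\xi|^2)$ for a smooth function $h$ on $[0,\fz)$ with $h(0)=1$, the quotient $(\widehat{\chi}(\xi)-1)/|\xi|^2$ becomes $(h(|\xi|^2)-h(0))/|\xi|^2$, which by Taylor expansion of $h$ equals a $C^\infty$ radial function on $\rn$. The uniform bound on $|\nabla^i[(\widehat{\chi}(\xi)-1)/|\xi|^2]|$ then splits into two cases: on $|\xi|\leq 2$, smoothness alone gives boundedness on the compact set; on $|\xi|\geq 1$, I would write the quotient as $\widehat{\chi}(\xi)/|\xi|^2 - 1/|\xi|^2$ and use that all derivatives of $\widehat{\chi}$ decay (since $x^\alpha\chi\in L^1(\rn)$ for every multi-index $\alpha$) while all derivatives of $1/|\xi|^2$ are $O(|\xi|^{-2-i})$, which is bounded on $|\xi|\geq 1$.

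For part (iii), the key is to compute the Taylor coefficient of order $2$ of $\widehat{\chi}$ at the origin. Expanding $e^{-i\xi\cdot x}=1-i\xi\cdot x-\frac12(\xi\cdot x)^2+O(|\xi|^3|x|^3)$ inside the integral defining $\widehat{\chi}$, the first-order term integrates to zero by the symmetry of $B(0,1)$, and the second-order term reduces by symmetry to $-\frac{|\xi|^2}{2n}\,\dashint_{B(0,1)}|x|^2\,dx=-\frac{|\xi|^2}{2(n+2)}=-\frac{|\xi|^2}{2n+4}$. Hence $\widehat{\chi}(\xi)=1-\frac{|\xi|^2}{2n+4}+O(|\xi|^4)$ near the origin, from which the claimed estimate follows by direct algebra.

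For part (iv), I would use the identity from \eqref{1-4'} with $t=1$, giving $\widehat{\mathcal{K}}(\xi)=\frac{\widehat{\chi}(\xi)-1}{|\xi|^{2}}+\frac{\widehat{\chi}(\xi)}{2n+4}$. On $|\xi|\leq 1$, rewrite this as $\lf[\frac{\widehat{\chi}(\xi)-1}{|\xi|^{2}}+\frac{1}{2n+4}\r]+\frac{\widehat{\chi}(\xi)-1}{2n+4}$; the first bracket is $O(|\xi|^{2})$ by (iii), and the second is $O(|\xi|^{2})$ since $\widehat{\chi}(\xi)-1=O(|\xi|^{2})$ near $0$ by the same Taylor expansion. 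On $|\xi|\geq 1$, part (i) gives $|\widehat{\chi}(\xi)|\lesssim|\xi|^{-(n+1)/2}\leq|\xi|^{-2}$ precisely because $(n+1)/2\geq 2$ when $n\geq 3$ (this is exactly where the hypothesis on $n$ is used), and the term $(\widehat{\chi}(\xi)-1)/|\xi|^2$ is bounded in modulus by $(1+|\widehat{\chi}(\xi)|)/|\xi|^2\lesssim|\xi|^{-2}$. Combining the two regimes yields the desired $\min\{|\xi|^2,|\xi|^{-2}\}$ bound. The only subtle ingredient is the sharpness of the decay rate in (i), which is why (iv) fails for $n=1,2$.
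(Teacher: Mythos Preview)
Your proposal is correct and matches the paper's approach: the paper simply cites references for (i) and (ii), while your Bessel-function asymptotics and $h(|\xi|^2)$ device are exactly the standard arguments behind those citations, and your treatment of (iii) and (iv) coincides with the paper's almost verbatim (split at $|\xi|=1$, Taylor expand for small $|\xi|$, use the $(n+1)/2\ge 2$ decay from (i) for large $|\xi|$). One small remark on (iii): to justify the $O(|\xi|^4)$ remainder you claim, you should also note that the cubic term $\tfrac{i}{6}(\xi\cdot x)^3$ integrates to zero over $B(0,1)$ by odd symmetry (just as the linear term does), since the Taylor remainder $O(|\xi|^3|x|^3)$ you wrote would by itself only yield $O(|\xi|^3)$.
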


\begin{proof}
The item (i) can be found in \cite[Appendix B.5, pp. 429-431]{lg08} and (ii) is from \cite[Lemma 2.1]{dgyy2}.

Now we prove (iii). Since $\chi$ is radial, we know that $\widehat{\chi}$ is also radial. Hence, for all
$\xi\in\rn$,
\begin{eqnarray*}
\widehat{\chi}(\xi)=\widehat{\chi}(|\xi|e_1)=\dashint_{B(0,1)}e^{-ix_1|\xi|}\,dx,
\end{eqnarray*}
where $e_1:=(1,0,\ldots,0)\in\rn$.
Therefore, if $|\xi|\leq1$, then, by the Taylor expansion, we see that
\begin{eqnarray*}
&&\lf|\f{\widehat{\chi}(\xi)-1}{|\xi|^{2}} +\f1{2n+4}\r|\\
&&\quad=\lf|-\f12\dashint_{B(0,1)}x_1^2\,dx+\f{|\xi|^2}{4!}\dashint_{B(0,1)}x_1^4\,dx
+o(|\xi|^2)\dashint_{B(0,1)}x_1^6\,dx+\f1{2n+4}\r|
\ls|\xi|^2.
\end{eqnarray*}
This proves (iii).

Finally we prove (iv). If $|\xi|\leq1$, then, by \eqref{1-4'}, (ii) and (iii), we have
  $$\lf|\widehat{\mathcal{K}}(\xi)\r| \leq\lf|\f{\widehat{\chi}(\xi)-1}{|\xi|^{2}} +\f1{2n+4}\r|+\lf|\f1{2n+4}\lf[1-\widehat{\chi}(\xi)\r]\r|\ls|\xi|^2.$$
  If $|\xi|\geq1$, then the desired estimate follows from \eqref{1-4'} and (i) with $n\geq3$.
  This finishes the proof of Lemma \ref{lem0'}.
\end{proof}

\begin{lemma}\label{Lem'}
Let $p\in(\max\{1, \f{2n}{4+n}\},\fz)$. Then there exists a positive constant $C$
such that, for all $f\in L^p(\rn)$,
 \begin{equation}\label{2-1-1'}\|\wt{S}(f)\|_{L^p(\rn)} \leq C \|f\|_{L^p(\rn)},\end{equation}
 where, for all $x\in\rn$,
  $$\wt{S}( f)(x):=\lf\{\int_0^\infty \dashint_{B(0,t)} |f\ast \mathcal{K}_t(x+y)|^2\, dy \f {dt}{t}\r\}^{\f12}.$$
 In particular, if $n\leq4$, then \eqref{2-1-1'} holds true for all $p\in(1,\fz)$.

 Conversely, if \eqref{2-1-1'} holds  true for some $p\in(1,\fz)$ and all $f\in L^p(\rn)$, then $p\ge \f {2n}{4+n}$.
\end{lemma}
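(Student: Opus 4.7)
My plan is to separate the analysis into two regimes, $p\in[2,\fz)$ and $p\in(\max\{1,2n/(4+n)\},2)$, treating each by quite different arguments, and then to establish the converse direction by testing the inequality on a carefully chosen function.

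For $p\in[2,\fz)$, the substitution $z=x+y$ identifies
$$\wt{S}(f)(x)^2\sim \int_0^\fz\int_{B(x,t)}|f\ast \mathcal{K}_t(z)|^2\,dz\,\f{dt}{t^{n+1}}=\mathcal{S}(\mathcal{F})(x)^2\quad\text{with}\quad \mathcal{F}(z,t):=f\ast \mathcal{K}_t(z).$$
Lemma \ref{e34x} followed by Lemma \ref{lem-2-3} then reduces the target inequality to $\|\mathcal{G}(\mathcal{F})\|_{L^p(\rn)}\lesssim\|f\|_{L^p(\rn)}$, where $\mathcal{G}(\mathcal{F})(x)=\{\int_0^\fz|f\ast\mathcal{K}_t(x)|^2\,dt/t\}^{1/2}$ is the classical Littlewood-Paley $g$-function with radial Fourier multiplier $\widehat{\mathcal{K}}$. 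Combining the bound $|\widehat{\mathcal{K}}(\xi)|\lesssim\min\{|\xi|^2,|\xi|^{-2}\}$ from Lemma \ref{lem0'}(iv) with the derivative estimates provided by Lemma \ref{lem0'}(ii), this inequality follows from standard Fourier-multiplier theory for all $p\in(1,\fz)$.

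For $p\in(\max\{1,2n/(4+n)\},2)$ the Lemma \ref{lem-2-3} reduction is unavailable, so I would realize $\wt{S}(f)$ as the pointwise Hilbert-space norm of a linear operator: set $H:=L^2(B(0,1)\times(0,\fz);du\,dt/t)$ and
$$\wt{S}(f)(x)=\|Tf(x)\|_H,\qquad Tf(x)(u,t):=f\ast \mathcal{K}_t(x+tu),$$
so that $T$ is convolution of $f$ with the $H$-valued kernel $\mathbf{K}(w)(u,t):=\mathcal{K}_t(w+tu)$, and Plancherel together with Lemma \ref{lem0'}(iv) yield the $L^2(\rn)\to L^2(\rn;H)$-boundedness of $T$. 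The core of the argument would be a dyadic frequency decomposition $\mathcal{K}=\sum_j\mathcal{K}^{(j)}$ (with $\widehat{\mathcal{K}^{(j)}}$ supported in $\{|\xi|\sim 2^j\}$), inducing $T=\sum_jT_j$, together with the verification of a vector-valued H\"ormander condition on each $T_j$ that gives single-piece $L^2\to L^2(H)$ norms tracking the $|\xi|^{\pm 2}$-vanishing of $\widehat{\mathcal{K}^{(j)}}$. Marcinkiewicz interpolation (\cite[Theorem 1.3.2]{lg08}) between these and the weak-$(1,1)$ bounds supplied by the vector-valued Calder\'on-Zygmund theorem (\cite[Theorem 3.4]{G-CF}) then yields scale-dependent $L^p$-operator norms for each $T_j$, and the resulting series converges precisely when $p>2n/(4+n)$. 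The hard part will be the quantitative verification of the H\"ormander condition
$$\int_{|w|>2|h|}\|\mathbf{K}^{(j)}(w-h)-\mathbf{K}^{(j)}(w)\|_H\,dw\lesssim\gamma_j$$
with the sharp decay rate $\gamma_j$: the averaging over $B(0,1)$ in the $u$-variable is the source of the extra regularity that allows $p$ to drop below $2$ (and, in low dimensions, below $1$), but pinning down the correct $\gamma_j$ requires careful pointwise estimates on $\nabla\mathcal{K}^{(j)}$ together with cancellation properties of each frequency-localized piece.

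For the converse direction, I would test \eqref{2-1-1'} on a family of dilated radial bumps and exploit the $|\xi|^2$-vanishing of $\widehat{\mathcal{K}}$ at the origin (Lemma \ref{lem0'}(iii)) via a Plancherel-type calculation. An asymptotic analysis of $\wt{S}(f)(x)$ for large $|x|$, together with the sharp $L^p$-integrability threshold of the resulting profile, produces the obstruction: the ratio $\|\wt{S}(f)\|_{L^p(\rn)}/\|f\|_{L^p(\rn)}$ becomes unbounded exactly when $p<2n/(4+n)$, forcing the constraint $p\ge 2n/(4+n)$ whenever \eqref{2-1-1'} holds.
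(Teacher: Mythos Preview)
Your forward-direction strategy matches the paper's. For $p\ge 2$ the reduction through Lemmas~\ref{e34x} and~\ref{lem-2-3} to the $g$-function is precisely what underlies Lemma~\ref{Lem p>2}(ii), which the paper invokes directly. For $p\in(\max\{1,2n/(4+n)\},2)$, your frequency decomposition $\mathcal{K}=\sum_j\mathcal{K}^{(j)}$ combined with vector-valued H\"ormander/Calder\'on--Zygmund theory and Marcinkiewicz interpolation is the paper's approach via the operators $\mathcal{T}_j$ of \eqref{2-1'}. The paper organizes the details somewhat differently --- it discretizes $t$ dyadically and takes the auxiliary Hilbert space to be $\Sigma=\rn\times\mathbb{Z}$ rather than your $L^2(B(0,1)\times(0,\infty))$; it splits off the $B_tf$-part via Lemma~\ref{lem-3-3} so that only the $I_2$-piece $\mathcal{T}_{j,1}$ requires the H\"ormander estimate (Lemma~\ref{lem-3-4'}); and it handles $j\le 0$ by pointwise maximal-function bounds (Lemma~\ref{lem-2-5'}) rather than H\"ormander --- but these are packaging choices, not different ideas.

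Your converse sketch, however, misidentifies the mechanism. The $|\xi|^2$-vanishing of $\widehat{\mathcal{K}}$ at the origin (Lemma~\ref{lem0'}(iii)) is irrelevant to the obstruction; what drives it is the $|\xi|^{-2}$ decay at infinity coming from the $I_2$-component of $\mathcal{K}$. Moreover, since $\widetilde{S}$ commutes with dilations, the ratio $\|\widetilde{S}(f_\lambda)\|_{L^p}/\|f_\lambda\|_{L^p}$ is \emph{constant} along any family of pure dilates, so ``the ratio becomes unbounded'' cannot arise in the way you describe. The paper instead takes high-frequency bumps $\phi_j$ with $\widehat{\phi_j}$ supported near $|\xi|\sim 2^j$, restricts the area integral to $t\in[1,2]$ to break the scaling, and bounds the $I_2$-contribution $J_{j,1}$ from \emph{below} via Plancherel on the fixed ball $\{|x|\le 1/2\}$ --- not via large-$|x|$ asymptotics. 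Comparing $\|J_{j,1}\|_{L^p}\gtrsim 2^{j(n/2-2)}$ against $\|\phi_j\|_{L^p}\sim 2^{jn(1-1/p)}$ as $j\to\infty$ then forces $p\ge 2n/(n+4)$. (A large-$|x|$ argument for a \emph{single} Schwartz $f$ can in fact be made to work and yields the same threshold, but it too rests on the $|\xi|^{-2}$ behavior: for $t\sim|x|$ one has $f\ast\mathcal{K}_t\approx -t^{-2}\,f\ast I_2$, whence $\widetilde{S}(f)(x)\gtrsim|x|^{-(n/2+2)}$.)
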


In order to show Lemma \ref{Lem'}, we need some technical lemmas.
Let $\varphi,\,\psi\in \cs(\rn)$ be two radial Schwartz functions such that $\supp\widehat{\varphi}
\subset \{ \xi\in\rn:\ \f 12 \leq |\xi| \leq 2\} $,
$ \sum_{j\in\mathbb{Z}} \widehat{\varphi}( 2^{-j} \xi) =1$ for all $\xi\neq 0$,
$\supp \widehat{\psi} \subset \{ \xi\in\rn:\ \f 14 \leq |\xi|\leq 4\}$ and $\widehat{\psi}(\xi) =1$
if $ \f 12 \leq |\xi|\leq 2$.
Clearly,
$ f\ast \varphi_{2^{-j}} = f\ast \varphi_{2^{-j}} \ast \psi_{2^{-j}}$.
Then we see that, for all $x\in\rn$,
 \begin{align*}
    &\wt{S}(f)(x)\\
    &\quad\sim \lf\{\int_{0}^{\infty} \int_{\rn} \lf|\sum_{j\in\mathbb{Z}} \sum_{k\in\mathbb{Z}}[ f\ast\varphi_{2^{-j-k}}\ast  \mathcal{K}_t (x+y)]\chi_{[2^{-k}, 2^{-k+1}]}(t)\r|^2 \,\chi_{B(0,t)}(y)\, dy \f {dt}{t^{n+1}}\r\}^{\f12},\end{align*}
   which,  together with the Minkowski inequality,  implies that, for all $x\in\rn$,
    \begin{align}\label{decom}
    \wt{S}(f)(x)&\ls\sum_{j\in\mathbb{Z}} \lf\{ \int_{0}^{\infty} \dashint_{B(0,t)} \lf| \sum_{k\in\mathbb{Z}}\lf[ f\ast\varphi_{2^{-j-k}}\ast  \mathcal{K}_t (x+y)\r]\chi_{[2^{-k}, 2^{-k+1}]}(t)\r|^2 \, dy \f {dt}t\r\}^{\f12}\\
    &\ls \sum_{j\in\mathbb{Z}} \mathcal{T}_j(f)(x),\notag\end{align}
    where, for all $j\in\mathbb{Z}$ and $x\in\rn$,
    \begin{align}\label{2-1'}
    \mathcal{T}_j (f)(x):
    &=\lf[\sum_{k\in\mathbb{Z}} \int_{2^{-k}}^{2^{-k+1}} \dashint_{B(0,2^{-k+1})} |f\ast \varphi_{2^{-j-k}}\ast \mathcal{K}_t(x+y)|^2 \, dy \f {dt}{t}\r]^{\f12}.
 \end{align}

We have the following lemma.
\begin{lemma}\label{lem-2-4'} Let $n\in[3,\fz)\cap\mathbb{N}$ and $j\in\mathbb{Z}$. Then there exists a positive constant $C$ such that, for all $f\in L^2(\rn)$,
$$\|\mathcal{T}_j(f)\|_{L^2(\rn)}\leq C2^{-2|j|} \|f\|_{L^2(\rn)}.$$
\end{lemma}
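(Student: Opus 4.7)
The plan is to reduce the estimate to a Plancherel-based frequency-support computation, using the smoothness and decay of $\widehat{\mathcal{K}}$ supplied by Lemma \ref{lem0'}(iv). The key observation is that the Lusin-type integral in \eqref{2-1'} can be handled trivially, because after integrating in $x$ over $\rn$ the inner average in $y$ is neutralized by translation invariance.

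More precisely, I would begin from the Fubini theorem and the translation invariance of Lebesgue measure to get
\begin{align*}
\|\mathcal{T}_j(f)\|_{L^2(\rn)}^2
 &= \sum_{k\in\mathbb{Z}} \int_{2^{-k}}^{2^{-k+1}} \dashint_{B(0,2^{-k+1})}
       \lf\|f\ast \varphi_{2^{-j-k}}\ast \mathcal{K}_t(\cdot + y)\r\|_{L^2(\rn)}^2 \,dy\,\f{dt}{t}\\
 &= \sum_{k\in\mathbb{Z}} \int_{2^{-k}}^{2^{-k+1}}
       \lf\|f\ast \varphi_{2^{-j-k}}\ast \mathcal{K}_t\r\|_{L^2(\rn)}^2 \,\f{dt}{t}.
\end{align*}
By the Plancherel theorem, each summand equals $\int_\rn |\widehat{f}(\xi)|^2 |\widehat{\varphi}(2^{-j-k}\xi)|^2 |\widehat{\mathcal{K}}(t\xi)|^2 \,d\xi$. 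The support condition $\supp\widehat{\varphi}\subset\{1/2\le|\xi|\le 2\}$ localizes the integrand to $|\xi|\sim 2^{j+k}$; combined with $t\sim 2^{-k}$ this gives $t|\xi|\sim 2^{j}$ on the support.

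Now I would invoke Lemma \ref{lem0'}(iv) (where $n\ge 3$ is used) with the split $|\widehat{\mathcal{K}}(\eta)|\ls\min\{|\eta|^2,|\eta|^{-2}\}$. Applied with $\eta=t\xi$ and $|t\xi|\sim 2^j$, this yields
\begin{equation*}
|\widehat{\mathcal{K}}(t\xi)|\ls 2^{-2|j|}\qquad \text{uniformly in }k\text{ and in }t\in[2^{-k},2^{-k+1}],
\end{equation*}
regardless of the sign of $j$: if $j\ge 0$ I use $|\eta|^{-2}$, and if $j<0$ I use $|\eta|^2$. Substituting this bound and carrying out the trivial $t$-integration (which contributes only $\log 2$), I obtain
\begin{equation*}
\|\mathcal{T}_j(f)\|_{L^2(\rn)}^2 \ls 2^{-4|j|}\sum_{k\in\mathbb{Z}} \int_\rn |\widehat{f}(\xi)|^2 |\widehat{\varphi}(2^{-j-k}\xi)|^2 \,d\xi.
\end{equation*}
Since $\{2^{-j-k}\cdot\}_{k\in\mathbb{Z}}$ shifts the supports of $\widehat{\varphi}$ by dyadic scales, the sum $\sum_{k\in\mathbb{Z}}|\widehat{\varphi}(2^{-j-k}\xi)|^2$ is bounded by a constant independent of $\xi$ (finite overlap of annuli). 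Another application of Plancherel then gives $\|\mathcal{T}_j(f)\|_{L^2(\rn)}^2\ls 2^{-4|j|}\|f\|_{L^2(\rn)}^2$, which is the claim.

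There is no real obstacle here; the decisive ingredient is Lemma \ref{lem0'}(iv), whose hypothesis $n\ge 3$ explains the dimensional restriction in the statement. The only point requiring mild care is that one must verify that, on the frequency support of $\widehat{\varphi}(2^{-j-k}\cdot)$, the quantity $t|\xi|$ is comparable to $2^j$ uniformly for $t\in[2^{-k},2^{-k+1}]$, so that the $\min\{(t|\xi|)^2,(t|\xi|)^{-2}\}$ bound produces the decay $2^{-2|j|}$ in both directions $j\to\pm\infty$ without any additional $k$-dependence.
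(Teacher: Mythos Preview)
Your proposal is correct and follows essentially the same approach as the paper's proof: Fubini plus translation invariance to remove the $y$-average, Plancherel to pass to the frequency side, the support condition on $\widehat{\varphi}$ to localize $|\xi|\sim 2^{j+k}$, Lemma \ref{lem0'}(iv) to obtain $|\widehat{\mathcal{K}}(t\xi)|\ls 2^{-2|j|}$, and the finite overlap of dyadic annuli to sum in $k$. Your write-up is in fact slightly more detailed than the paper's, but the argument is the same.
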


\begin{proof}
 By the Plancherel theorem and Lemma \ref{lem0'}(iv), we have
\begin{align*}
   \|\mathcal{T}_j(f)\|_{L^2(\rn)}^2&=\sum_{k\in\mathbb{Z}} \int_{2^{-k}}^{2^{-k+1}} \dashint_{B(0,2^{-k+1})} \int_{\rn}  |f\ast \varphi_{2^{-j-k}}\ast \mathcal{K}_t(x+y)|^2\, dx \, dy \f {dt}{t}\\
   &= \sum_{k\in\mathbb{Z}} \int_{2^{-k}}^{2^{-k+1}} \int_{\rn}  |f\ast \varphi_{2^{-j-k}}\ast \mathcal{K}_t(x)|^2\, dx \f {dt}{t}\\
   &\ls\sum_{k\in\mathbb{Z}} \int_{|\xi|\sim 2^{j+k}} |\widehat{f}(\xi)|^2   \int_{2^{-k}}^{2^{-k+1}}| \widehat{\mathcal{K}}(t\xi)|^2\, \f {dt}{t} d\xi\\
   &\ls\min\lf\{2^{4 j},\,2^{-4j}\r\} \|f\|_{L^2(\rn)}^2,
\end{align*}
where $|\xi|\sim2^{j+k}$ denotes the set $\{\xi\in\rn:\ 2^{j+k-1}<|\xi|\leq2^{j+k+1}\}$. This finishes the proof of Lemma \ref{lem-2-4'}.
\end{proof}

\begin{lemma}\label{lem-2-5'} Let $n\in[3,\fz)\cap\mathbb{N}$, $p\in(1,\fz)$ and
$\varepsilon\in (0,1)$. Then there exists a positive constant $C_{(\varepsilon,\,p)}$,
depending on $\varepsilon$ and $p$, such that, for all $j\in(-\fz,0]\cap\mathbb{Z}$
and $f\in L^p(\rn)$,
$$\|\mathcal{T}_j(f)\|_{L^p(\rn)}\leq C_{(\varepsilon,\,p)} 2^{4j(\f1q-\varepsilon)}\|f\|_{L^p(\rn)},$$
where $q:=\max\{p,p'\}$.
\end{lemma}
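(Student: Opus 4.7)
The strategy is to view $\mathcal{T}_j$ as a Hilbert-valued singular-integral operator and to interpolate the $L^2$ bound from Lemma~\ref{lem-2-4'} with a vector-valued weak-type $(1,1)$ estimate obtained via a H\"ormander condition. Introduce the Hilbert space
$$\mathcal{H} := \bigoplus_{k\in\mathbb{Z}} L^2\lf([2^{-k},2^{-k+1}],\tfrac{dt}{t}\r)\otimes L^2\lf(B(0,2^{-k+1}),\tfrac{dy}{|B(0,2^{-k+1})|}\r),$$
and the $\mathcal{H}$-valued convolution kernel $\mathbf{K}_j(z)$ whose $(k,t,y)$-entry equals $\varphi_{2^{-j-k}}\ast\mathcal{K}_t(z+y)$. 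Then \eqref{2-1'} reads $\mathcal{T}_j(f)(x)=\|(\mathbf{K}_j\ast f)(x)\|_{\mathcal{H}}$, and Lemma~\ref{lem-2-4'} is precisely the statement that the underlying linear operator $f\mapsto\mathbf{K}_j\ast f$ is bounded from $L^2(\rn)$ into $L^2(\rn;\mathcal{H})$ with norm $\ls 2^{2j}$ (as $|j|=-j$ for $j\le 0$).

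The main analytic step is to verify the vector-valued H\"ormander condition
$$\int_{|z|>2|h|}\|\mathbf{K}_j(z-h)-\mathbf{K}_j(z)\|_{\mathcal{H}}\,dz\le C(1+|j|)^{a},\qquad h\in\rn\setminus\{0\},$$
for some $a\ge 0$ independent of $j$. Fix $k\in\mathbb{Z}$: the scale-$k$ entry is a smooth convolution kernel with Fourier support in $\{|\xi|\sim 2^{j+k}\}$, and by Lemma~\ref{lem0'}(iv) applied at $|t\xi|\sim 2^{j}\le 1$ one has $|\widehat{\mathcal{K}}(t\xi)|\ls 2^{2j}$ on that support. The rescaling $z\mapsto 2^{-j-k}z$ then reduces the scale-$k$ integrand, together with its first-order difference, to a universal Schwartz-type profile whose $L^1(\mathcal{H}_k)$-mass is bounded uniformly in $k$. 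The near-disjointness of the frequency supports across $k$ ensures that summing the scale-$k$ contributions costs at most a polynomial factor in $|j|$. Applying the vector-valued Calder\'on-Zygmund theorem \cite[Theorem~3.4]{G-CF} to $\mathbf{K}_j$ then yields the weak $(1,1)$ estimate
$$\|\mathcal{T}_j(f)\|_{L^{1,\infty}(\rn)}\le C(1+|j|)^{a}\|f\|_{L^1(\rn)}.$$

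Interpolating this weak $(1,1)$ bound with the $L^2\to L^2$ bound via the Marcinkiewicz theorem \cite[Theorem~1.3.2]{lg08} gives, for $p\in(1,2]$ with $\theta=2/p'=2/q$,
$$\|\mathcal{T}_j(f)\|_{L^p(\rn)}\le C_p(1+|j|)^{a(1-\theta)}\,2^{4j/q}\|f\|_{L^p(\rn)}.$$
For $p\in[2,\infty)$ the analogous bound with $q=p$ follows by duality: the formal adjoint kernel $\mathbf{K}_j^\ast$ (obtained by reflection and conjugation) satisfies the same H\"ormander condition and the same $L^2$ bound, so the previous argument applied to the adjoint yields the $L^{p'}(\mathcal{H})\to L^{p'}$ estimate, and then $L^p(\mathcal{H})^\ast=L^{p'}(\mathcal{H})$ gives the desired $L^p$ estimate for $\mathcal{T}_j$. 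Finally, since $j\le 0$ and $(1+|j|)^{a}\le C_\varepsilon 2^{-4j\varepsilon}$ for any $\varepsilon\in(0,1)$, absorbing the polynomial factor gives the stated inequality $\|\mathcal{T}_j(f)\|_{L^p(\rn)}\le C_{(\varepsilon,p)}2^{4j(1/q-\varepsilon)}\|f\|_{L^p(\rn)}$.

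The main obstacle is the H\"ormander verification: one must be careful in the $k$-summation because each scale carries a kernel with a different localization, and a na\"ive termwise bound loses too much. What makes the estimate succeed is that for $j\le 0$ the Littlewood-Paley envelope $\varphi_{2^{-j-k}}$ has scale $2^{-j-k}\ge 2^{-k}\sim t$, so that $\varphi_{2^{-j-k}}\ast\mathcal{K}_t$ inherits the Schwartz decay of $\varphi$ rather than the slow decay of $\mathcal{K}$, permitting a clean scale-by-scale H\"ormander bound whose $k$-sum only loses polynomially in $|j|$.
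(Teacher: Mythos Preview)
Your vector-valued Calder\'on--Zygmund approach is viable but genuinely different from the paper's, and the crucial H\"ormander step is only sketched. The paper instead proves directly the \emph{uniform} strong-type bound $\|\mathcal{T}_j(f)\|_{L^p(\rn)}\le C_p\|f\|_{L^p(\rn)}$ for all $p\in(1,\infty)$ and $j\le 0$ by a pointwise estimate: writing the $(k,t)$-piece of \eqref{2-1'} as a multiplier acting on $f_{j+k}:=f\ast\psi_{2^{-j-k}}$ and using Lemma~\ref{lem0'}(ii) together with $j\le 0$, one shows that each summand is dominated by $M(f_{j+k})(x)$; the Fefferman--Stein vector-valued maximal inequality and the Littlewood--Paley characterization of $L^p(\rn)$ then give \eqref{se0} at once. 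Interpolating this uniform $L^{p_0}$ bound against the $L^2$ bound of Lemma~\ref{lem-2-4'} yields the lemma, the $\varepsilon$ arising because $C_{p_0}\to\infty$ as $p_0\to 1$ or $p_0\to\infty$. No kernel-difference estimates are needed.

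In your outline the real gap is the $k$-summation inside the H\"ormander integral. ``Near-disjointness of the frequency supports'' is an $L^2$-orthogonality statement and does not by itself control an $L^1(dz)$ integral of an $\ell^2(k)$-norm. What actually makes the sum converge is the observation you do make---that for $j\le 0$ the shift $|y|\le 2^{-k+1}$ lies within the kernel scale $2^{-j-k}$---combined with a careful pointwise bound: the $k$-th summand of $\|\mathbf{K}_j(z-h)-\mathbf{K}_j(z)\|_{\mathcal{H}}^2$ obeys $\lesssim 2^{4j}\min\{|h|^2 2^{2m(n+1)},\,2^{2mn}\}(1+2^m|z|)^{-2N}$ with $m=j+k$, and summing this geometric series in $m$ and integrating over $|z|>2|h|$ gives a H\"ormander constant $O(2^{2j})$---in particular no polynomial loss $(1+|j|)^a$ is required. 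This is exactly the type of computation the paper carries out for the $j\ge 0$ regime in Lemma~\ref{lem-3-4'}, so your strategy is consistent with the paper's toolkit; it is simply that for $j\le 0$ the maximal-function route is shorter and avoids these kernel estimates altogether.
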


\begin{proof}
By Lemma \ref{lem-2-4'} and the Marcinkiewicz interpolation theorem
(see, for example, \cite[Theorem 1.3.2]{lg08}), it suffices to show that
\begin{equation}\label{se0}
\|\mathcal{T}_j(f)\|_{L^p(\rn)}\ls\|f\|_{L^p(\rn)},\   \   \ p\in(1,\fz).\end{equation}

By \eqref{2-1'} and \eqref{1-4'}, we see that, for all $x\in\rn$,
\begin{eqnarray}\label{te2}
    |\mathcal{T}_j (f)(x)|^2&&\ls \sum_{k\in\mathbb{Z}}
    \sup_{\substack {y\in B(0,2^{-k+1})\\
    t\in [2^{-k}, 2^{-k+1}]}} \lf|t^{-2}(B_t-I)(f\ast I_2\ast \varphi_{2^{-j-k}})(x+y)\r|^2\\
    &&\quad +\sum_{k\in\mathbb{Z}} \lf[M( f\ast \vi_{2^{-j-k}})(x)\r]^2,\noz
\end{eqnarray}
here and hereafter, $M$ denotes the \emph{Hardy-Littlewood maximal operator} defined by
 $$M(g)(x):=\sup_{B\subset \rn} \dashint_B |g(y)|\,dy, \ \ \ \ g\in L^1_\loc(\rn)
 \ \ {\rm and}\ \ x\in\rn,$$
where the supremum is taken over all balls $B$ in $\rn$ containing $x$.
Notice that, for all $\xi\in\rn$,
\begin{align*}
    t^{-2}\lf( (I-B_t)(f\ast I_2\ast \varphi_{2^{-j-k}})\r)^{\wedge} (\xi) =\widehat{\varphi}(2^{-j-k}\xi)\f {1-\widehat{\chi}(t\xi)} {t^2|\xi|^2} \widehat{f_{j+k}}(\xi)=:m_{j,k}(2^{-k}\xi) \widehat{f_{j+k}}(\xi),
\end{align*}
here and hereafter
$$m_{j,k}(\xi):=\widehat{\varphi}(2^{-j}\xi)\f {1-\widehat{\chi}(2^kt\xi)}{|2^kt\xi|^2},\ \ \ \ \ \xi\in\rn$$
 and $f_{j+k}:=f\ast\psi_{2^{-j-k}}$. Since $j\leq0$, by Lemma \ref{lem0'}(ii), for $t\in [2^{-k}, 2^{-k+1}]$, we have
\begin{equation*}
 |\nabla^i m_{j,k} (\xi)| \ls  2^{-ij}\chi_{[2^{j-1},2^{j+1}]}(|\xi|), \qquad i\in\mathbb{Z}_+,\ \xi\in\rn
\end{equation*}
and hence, for any $\ell\in\mathbb{Z}_+$,
$$|m_{j,k}^\vee(x)|\ls 2^{jn}(1+2^j|x|)^{-\ell},\   \  \  x\in\rn.$$
It follows that, for all $t\in [2^{-k}, 2^{-k+1}]$,
\begin{align}\label{2-2}
  & t^{-2}\sup_{|y|\leq 2^{-k+1}} |(I-B_t)(f\ast I_2\ast \varphi_{2^{-j-k}})  (x+y)|\\
  &\quad=\sup_{|y|\leq 2^{-k+1}}2^{kn}\lf|\int_{\rn} f_{j+k}(z) m_{j,k}^\vee (2^k(x+y-z))\, dz \r|\noz\\
   &\quad\ls 2^{(k+j)n}\int_{\rn} |f_{j+k}(z)| \sup_{|y|\leq  2^{-k+1}} \lf( 1+2^{k+j}|x+y-z|\r)^{-n-1}\, dz\noz\\
   &\quad\ls 2^{(k+j)n}\int_{\rn} |f_{j+k}(z)| \lf( 1+2^{k+j}|x-z|\r)^{-n-1}\, dz\ls M(f_{j+k})(x),\noz
\end{align}
where in the second inequality, we used the assumption $j\leq 0$.
By \eqref{te2}, \eqref{2-2}, the Fefferman-Stein vector-valued inequality (see \cite{fs}) and  the Littlewood-Paley characterization of $L^p(\rn)$ (see, for example, \cite[Theorem 6.2.7]{lg09}),
we obtain \eqref{se0} and hence complete the proof of Lemma \ref{lem-2-5'}.
\end{proof}
\begin{lemma}\label{lem-3-3}
Let $p\in(1,\fz)$. Then there exist positive constants $C$ and $C_{(p)}$
such that, for all $j\in\mathbb{Z}$, $f\in L^p(\rn)$ and $x\in\rn$,
\begin{align*}
|\mathcal{T}_j(f)(x)| &\leq C \lf(|\mathcal{T}_{j,1}(f)(x)| + |\mathcal{T}_{j,2}(f)(x)|\r),
\end{align*}
where
\begin{align}\label{te4}
\mathcal{T}_{j,1}(f)(x):=& \lf[\sum_{k\in\mathbb{Z}} 2^{4k} \dashint_{ B(x, 2^{-k+4})}\lf|f\ast I_2\ast \vi_{2^{-j-k}}(z)\r|^2\, dz\r]^{\f12}\end{align}
and $\mathcal{T}_{j,2}$ satisfies that
\begin{align}\label{te5}
\|\mathcal{T}_{j,2}(f)\|_{L^p(\rn)}\leq C_{(p)}\|f\|_{L^p(\rn)},\   \  p\in(1,\fz).
\end{align}
\end{lemma}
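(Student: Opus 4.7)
The plan is to split the inner integrand in \eqref{2-1'} via the identity \eqref{1-4'}, writing
$$f \ast \varphi_{2^{-j-k}} \ast \mathcal{K}_t = \f{(B_t - I)(f \ast I_2 \ast \varphi_{2^{-j-k}})}{t^2} + \f{B_t(f \ast \varphi_{2^{-j-k}})}{2n+4},$$
so that the triangle inequality bounds $|\mathcal{T}_j(f)(x)|$ by two square functions, say $\mathcal{T}_j^{(1)}(f)(x)$ and $\mathcal{T}_j^{(2)}(f)(x)$, corresponding to these two summands. I would then dominate $\mathcal{T}_j^{(1)}(f)$ pointwise by $\mathcal{T}_{j,1}(f)$ from \eqref{te4}, and set $\mathcal{T}_{j,2}:=\mathcal{T}_j^{(2)}$, establishing \eqref{te5} directly by maximal and Littlewood-Paley tools already used in the proof of Lemma \ref{lem-2-5'}.

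For the first piece, set $F_{j,k} := f \ast I_2 \ast \varphi_{2^{-j-k}}$ and apply the trivial bound $|(B_t F_{j,k})(x+y) - F_{j,k}(x+y)|^2 \ls |B_t F_{j,k}(x+y)|^2 + |F_{j,k}(x+y)|^2$. The $|F_{j,k}|^2$ contribution, once averaged over $y\in B(0,2^{-k+1})$, equals $\dashint_{B(x,2^{-k+1})}|F_{j,k}(z)|^2\,dz$, which is manifestly controlled by the summand in \eqref{te4}. For the $|B_tF_{j,k}|^2$ contribution, I would use Jensen's inequality to move the square inside, obtaining $|B_tF_{j,k}(x+y)|^2\le \dashint_{B(x+y,t)}|F_{j,k}(w)|^2\,dw$, and then apply Fubini to exchange the order of integration in $y$ and $w$; for fixed $w\in B(x,2^{-k+2})$ the admissible $y$-set has Lebesgue measure at most $|B(0,t)|$, so the iterated average collapses to
$$\dashint_{B(0, 2^{-k+1})}\dashint_{B(x+y, t)} |F_{j,k}(w)|^2\,dw\,dy \ls \dashint_{B(x, 2^{-k+2})}|F_{j,k}(w)|^2\,dw.$$
Combining this with the factor $t^{-4}\sim 2^{4k}$ for $t\in[2^{-k},2^{-k+1}]$ and with $\int_{2^{-k}}^{2^{-k+1}}dt/t=\ln 2$, and enlarging the ball from $B(x,2^{-k+2})$ to $B(x,2^{-k+4})$, yields the desired pointwise bound by $\mathcal{T}_{j,1}(f)(x)$.

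For the second piece, I would observe that, for $y\in B(0,2^{-k+1})$ and $t\in[2^{-k},2^{-k+1}]$, the ball $B(x+y,t)$ lies inside $B(x,2^{-k+2})$, and therefore, for any $g\in L^1_{\loc}(\rn)$,
$$|B_tg(x+y)|\le \f{|B(x,2^{-k+2})|}{|B(x+y,t)|}\dashint_{B(x,2^{-k+2})}|g(w)|\,dw\ls M(g)(x),$$
with $M$ the Hardy-Littlewood maximal operator from the proof of Lemma \ref{lem-2-5'}. Applying this with $g:=f\ast\varphi_{2^{-j-k}}$ and integrating in $t\in[2^{-k},2^{-k+1}]$ yields the pointwise estimate
$$|\mathcal{T}_{j,2}(f)(x)|^2 \ls \sum_{k\in\mathbb{Z}}\lf[M(f\ast\varphi_{2^{-j-k}})(x)\r]^2.$$
Taking $L^p(\rn)$ norms and applying the Fefferman-Stein vector-valued maximal inequality (see \cite{fs}) with exponent $q=2$, followed by the classical Littlewood-Paley characterization of $L^p(\rn)$ (see, e.g., \cite[Theorem 6.2.7]{lg09}), then gives $\|\mathcal{T}_{j,2}(f)\|_{L^p(\rn)}\ls\|f\|_{L^p(\rn)}$, which is exactly \eqref{te5}.

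The main technical step is the Fubini-plus-Jensen computation for the first piece, where one must carefully check that the iterated averages in $y$ and in $w\in B(x+y,t)$ collapse to a single ball average centered at $x$ with constants independent of $j$, $k$ and $t$; once this is in place, the second piece is essentially a routine application of the same maximal and Littlewood-Paley machinery used in Lemma \ref{lem-2-5'}.
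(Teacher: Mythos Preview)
Your proposal is correct and follows essentially the same route as the paper: split via \eqref{1-4'}, bound the $(B_t-I)(f\ast I_2\ast\varphi_{2^{-j-k}})/t^2$ piece pointwise by a single ball average of $|f\ast I_2\ast\varphi_{2^{-j-k}}|^2$ centered at $x$, and control the $B_t(f\ast\varphi_{2^{-j-k}})$ piece by $M(f\ast\varphi_{2^{-j-k}})(x)$, finishing with Fefferman--Stein and Littlewood--Paley exactly as in Lemma~\ref{lem-2-5'}. The only difference is cosmetic: the paper compresses your Jensen-plus-Fubini computation into the single line $\dashint_{B(0,2t)}|t^{-2}(B_t-I)(g\ast I_2)(x+y)|^2\,dy\ls t^{-4}\dashint_{B(x,3t)}|g\ast I_2(z)|^2\,dz$, whereas you spell out the collapse of the iterated average explicitly.
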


\begin{proof}
By \eqref{1-4'},  for all $t\in(0,\fz),\,g\in L_{{\rm loc}}^1(\rn)$ and  $x\in\rn$, we have
\begin{align*}
&\dashint_{B(0, 2t)}| g\ast \mathcal{K}_t(x+y)|^2\, dy\\
& \ls\dashint_{B(0, 2t)}\lf|\f{(B_t-I) (g\ast I_2)(x+y)}{t^2}\r|^2 \, dy  + \dashint_{B(0, 2t)}  |B_t (g)(x+y)|^2\, dy\\
&\ls t^{-4}\dashint_{B(x, 3t)}|g\ast I_2(z)|^2\, dz+ [M(g)(x)]^2,
\end{align*}
 which, together with \eqref{2-1'}, implies that
\begin{align*}
|\mathcal{T}_j (f)(x)| &\leq C \lf(\lf|\mathcal{T}_{j,1}(f)(x)\r| + \lf|\mathcal{T}_{j,2}(f)(x)\r|\r),
\ \ \ \ x\in\rn, \end{align*}
where $\mathcal{T}_{j,1}(f)$ is given by \eqref{te4} and
\begin{align*}
\mathcal{T}_{j,2}(f)(x):=& \lf\{\sum_{k\in\mathbb{Z}} \lf[M( f\ast \vi_{2^{-j-k}})(x)\r]^2\r\}^{\f12}.
\end{align*}
The inequality \eqref{te5} follows directly from the Fefferman-Stein vector-valued inequality (see \cite{fs}) and  the Littlewood-Paley characterization of $L^p(\rn)$ (see, for example, \cite[Theorem 6.2.7]{lg09}). This finishes the proof of Lemma \ref{lem-3-3}.
\end{proof}

For $J_{j,1}$, we have the following estimate.
\begin{lemma}\label{lem-3-4'}
Let $\va\in (0,1)$ and $p\in(1,2]$.
Then there exists a positive constant $C_{(\va,\,p)}$, depending on $\va$ and $p$,
such that, for all $j\in[0,\fz)\cap\mathbb{Z}$ and $f\in L^p(\rn)$,
\begin{equation}\label{3-10-0'}
\|\mathcal{T}_{j,1}(f)\|_{L^p(\rn)}\leq C_{(\va,\,p)} 2^{[ n(\f 1p-\f12)-2+\va]j}\|f\|_{L^p(\rn)}.
\end{equation}
\end{lemma}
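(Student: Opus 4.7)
The plan is to reduce $\mathcal{T}_{j,1}$ to $2^{-2j}$ times a discretized Lusin-area-type expression with enlarged averaging balls, and then to dominate this expression pointwise by a discrete Littlewood-Paley $g_\lambda^\ast$-function with $\lambda$ chosen just above $2/p$.

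First, I would introduce the radial Schwartz function $\Psi\in\cs(\rn)$ defined by $\widehat\Psi(\xi):=|\xi|^{-2}\widehat\varphi(\xi)$, which is well defined because $\widehat\varphi$ is supported in $\{1/2\le|\xi|\le 2\}$. A direct Fourier-side computation yields the scaling identity $I_2\ast\varphi_{2^{-j-k}}=2^{-2(j+k)}\Psi_{2^{-j-k}}$ and, after reindexing $m:=j+k$, \eqref{te4} becomes
\begin{equation*}
\mathcal{T}_{j,1}(f)(x)=2^{-2j}\lf[\sum_{m\in\mathbb{Z}}\dashint_{B(x,2^{j-m+4})}|f\ast\Psi_{2^{-m}}(z)|^{2}\,dz\r]^{1/2}=:2^{-2j}V_j(f)(x).
\end{equation*}

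Next, for any $\lambda\in(1,\fz)$ and any $z\in B(x,2^{j-m+4})$, the elementary observation $2^{-m}/(2^{-m}+|x-z|)\ge 2^{-j-5}$ gives
\begin{equation*}
\chi_{B(x,2^{j-m+4})}(z)\le 2^{(j+5)\lambda n}\lf(\f{2^{-m}}{2^{-m}+|x-z|}\r)^{\lambda n}.
\end{equation*}
Inserting this into $V_j(f)(x)^2$ and using $|B(x,2^{j-m+4})|^{-1}\sim 2^{(m-j)n}$, I arrive at the pointwise bound
\begin{equation*}
V_j(f)(x)^{2}\le C\,2^{(\lambda-1)nj}\sum_{m\in\mathbb{Z}}2^{mn}\int_{\rn}\lf(\f{2^{-m}}{2^{-m}+|x-z|}\r)^{\lambda n}|f\ast\Psi_{2^{-m}}(z)|^{2}\,dz=:C\,2^{(\lambda-1)nj}[\wt g_\lambda^\ast(f)(x)]^{2},
\end{equation*}
where $\wt g_\lambda^\ast(f)$ is a discrete Littlewood-Paley $g_\lambda^\ast$-function associated with the Schwartz kernel $\Psi$.

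By the classical $L^p$-boundedness of the $g_\lambda^\ast$-function for $p\in(1,\fz)$ and $\lambda>\max\{1,2/p\}$ (see, e.g., \cite[Theorem 6.1.4]{lg09}), which adapts from the continuous case to $\wt g_\lambda^\ast$ via Plancherel and the boundedness of $\sum_m|\widehat\Psi(2^{-m}\xi)|^{2}$, one has $\|\wt g_\lambda^\ast(f)\|_{L^p(\rn)}\le C_{(\lambda,\,p)}\|f\|_{L^p(\rn)}$. Taking square roots in the pointwise bound yields
\begin{equation*}
\|\mathcal{T}_{j,1}(f)\|_{L^p(\rn)}\le C\,2^{-2j}\cdot 2^{(\lambda-1)nj/2}\|f\|_{L^p(\rn)}.
\end{equation*}
Given $p\in(1,2]$ and $\va\in(0,1)$, the choice $\lambda:=2/p+2\va/n$ satisfies $\lambda>2/p$ and $(\lambda-1)n/2=n(1/p-1/2)+\va$, which delivers \eqref{3-10-0'}.

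The main obstacle is the pointwise comparison step that produces the precise growth factor $2^{(\lambda-1)nj/2}$: the enlargement $2^{(j+5)\lambda n}$ needed to replace the ball indicator by the smooth $g_\lambda^\ast$-weight must interact correctly with the volume factor $|B(x,2^{j-m+4})|^{-1}$, and the $L^p$-boundedness of $\wt g_\lambda^\ast$ is only available above the threshold $\lambda>2/p$. This threshold confines the argument to $p\in(1,2]$ (where $2/p\ge 1$, so $\lambda>1$ is forced) and is the source of the slack parameter $\va$ in the final bound.
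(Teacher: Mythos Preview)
Your argument is correct and takes a genuinely different route from the paper's proof. The paper establishes \eqref{3-10-0'} by first obtaining the $L^2$ bound $\|\mathcal{T}_{j,1}(f)\|_{L^2(\rn)}\ls 2^{-2j}\|f\|_{L^2(\rn)}$ via Plancherel, then proving a weak-type $(1,1)$ estimate with constant $2^{j(n/2+\bz-2)}$ by verifying an explicit vector-valued H\"ormander condition for the kernels $H_{j+k}$ (a computation occupying roughly a page), and finally interpolating via the Marcinkiewicz theorem. Your approach bypasses both the H\"ormander-condition verification and the interpolation: after the clean scaling reduction $\mathcal{T}_{j,1}(f)=2^{-2j}V_j(f)$, the pointwise comparison of the enlarged ball indicator with the $g_\lz^\ast$-weight produces exactly the growth factor $2^{(\lz-1)nj/2}$, and a single choice of $\lz$ just above $2/p$ delivers the required exponent. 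What your method buys is brevity and a direct reduction to a classical, quotable bound; what the paper's approach buys is self-containment (the $L^p$-boundedness of $g_\lz^\ast$ for $p\in(1,2)$ is itself proved via a weak-type estimate and interpolation, so the Calder\'on--Zygmund machinery is hidden rather than avoided). One small caveat: the version of $g_\lz^\ast$ in \cite[Theorem 6.1.4]{lg09} is stated for the Poisson kernel and in continuous form, so the adaptation to the discrete $\wt g_\lz^\ast$ with the Schwartz kernel $\Psi$ (whose Fourier transform is compactly supported in an annulus) does require a line or two of justification, though this is entirely routine.
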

\begin{proof}
By \eqref{te4} and the Plancherel theorem, we have
\begin{align}\label{se2'}
    \|\mathcal{T}_{j,1}(f)\|_{L^2(\rn)}^2 & \sim\sum_{k\in\mathbb{Z}} 2^{(n+4) k} \int_{B(0, 2^{-k+4})} \int_{\rn} |f\ast \vi_{2^{-j-k}}\ast I_2 (x+y)|^2 \, dx dy \\
    &\ls\sum_{k\in\mathbb{Z}} 2^{4 k} \int_{\rn} |f_{j+k} \ast \vi_{2^{-j-k}}\ast I_2(z)|^2 \, dz\noz\\
    &\ls \sum_{k\in\mathbb{Z}} 2^{4 k} \int_{|\xi|\sim 2^{j+k}} |\widehat{f_{j+k}}(\xi)|^2 |\xi|^{-4}\,d\xi\noz\\
    &\ls 2^{-4j} \sum_{k\in\mathbb{Z}} \|\widehat{f_{j+k}}\|_{L^2(\rn)}^2 \ls 2^{-4 j}\|f\|_{L^2(\rn)}^2\noz.
 \end{align}
This proves \eqref{3-10-0'} when $p=2$.

To show \eqref{3-10-0'} in the case when $p\in(1,2)$, by \eqref{se2'} and the Marcinkiewicz interpolation theorem (see, for example, \cite[Theorem 1.3.2]{lg08}), it suffices to prove that,
for all $\bz\in(0,1)$,
\begin{equation}\label{3-12'}
\lf|\{ x\in\rn:\ \mathcal{T}_{j,1}(f)(x)>\lz\}\r| \ls 2^{j(\f n2+\bz-2)}\f{\|f\|_{L^1(\rn)}}{\lz},\   \   \   \lz\in(0,\fz).
\end{equation}
To this end, let
$$\Sigma:=\lf (\rn\times \mathbb{Z}, \, \chi_{B(0, 2^{-k+4})}(y)\,dy\,d\gamma(k)\r),$$
where $d\gamma$ denotes the counting measure on $\mathbb{Z}$, namely, $\gamma(\{k\})=1$ for any $k\in\mathbb{Z}$.
Consider the following $L^2(\Sigma)$-valued operator:
 $$A_j (f)(x):= A_j (f)(x; y, k) :=  2^{(\f n2+2)k} f\ast \vi_{2^{-j-k}}\ast I_2(x+y), \   \ x\in\rn, \  y\in\rn, k\in \mathbb{Z}.$$
 Then, clearly,  $\mathcal{T}_{j,1}(f)(x)\sim \|A_j (f) (x)\|_{L^2(\Sigma)}$ and hence the proof of \eqref{3-12'} can be reduced to show that
 \begin{equation*}
    \lf|\{ x\in\rn:\ \|A_j(f)(x)\|_{L^2(\Sigma)}>\lz\}\r| \ls2^{j(\f n2+\bz-2)}\f{\|f\|_{L^1(\rn)}}{\lz},\   \   \  \ \  \lz\in(0,\fz)\ {\rm and}\ \bz\in (0,1).
\end{equation*}

Notice that
\begin{align*}
    2^{2 k}\lf( f\ast I_2\ast \vi_{2^{-j-k}}\r)^{\wedge} (\xi) =\f{\widehat{\vi}(2^{-j-k}\xi)} {(2^{-k}|\xi|)^2} \widehat{f}(\xi)=:2^{-2 j}m(2^{-j-k}\xi) \widehat{f}(\xi),
\end{align*}
where
$m(\xi):=\f{\widehat{\vi}(\xi)} {|\xi|^2}\in C_c^\infty(\rn)$. Thus,
$$ A_j(f)(x)=\int_{\rn} f(z) H_{j+k}(x+y-z)\, dz,\   \  (y,k)\in\Sigma,$$
where
$$H_{j+k}(z) :=2^{-2 j} 2^{\f n2 k} 2^{(j+k) n} m^\vee (2^{j+k} z),\ \ \ z\in\rn.$$
Therefore, by
$$\lf\|\|A_j(f)\|_{L^2(\Sigma)}\r\|_{L^2(\rn)}\ls 2^{-2 j}\|f\|_{L^2(\rn)}$$
 (due to \eqref{se2'}) and
the vector-valued Calder\'on-Zygmund theory (see \cite[Theorem 3.4]{G-CF}), it suffices to show that,
for any $\bz\in (0,1)$,
\begin{align}\label{3-13}
   \lf\|H_{j+k}(x+\cdot-z)-H_{j+k}(x+\cdot)\r\|_{L^2(\Sigma)} \ls 2^{j(\f n2+\bz-2)}\f {|z|^\bz}{|x|^{n+\bz}}\ \ \ \ {\rm for\ all}\ |x|>2|z|.
\end{align}

Now we prove \eqref{3-13}. Since $\vi$ is radial, we know that $m^\vee$ is also radial. Thus, there exists a radial Schwartz function $G$ on $\mathbb{R}$ such that $m^\vee(\xi)=G(|\xi|)$ for all $\xi\in\rn$. Define
$$ \widetilde{H}_{j+k} (u):= 2^{-2j} 2^{\f{nk}2} 2^{(j+k)n} G(2^{j+k}u),\   \   \  u\in\mathbb{R}.$$
Then $H_{j+k}(z)=\widetilde{H}_{j+k}(|z|)$.
Furthermore, since $G\in\cs(\rn)$, we know that,  for any given $\ell\in(0,\fz)$ and all $u\in \mathbb{R}$,
\begin{equation}\label{3-15}
    \lf| \widetilde{H}_{j+k} (u)\r|\ls 2^{-2j} 2^{\f {nk}2} 2^{(j+k)n} \lf(1+ 2^{j+k}|u|\r)^{-\ell}
\end{equation}and
\begin{equation}\label{3-14}
    \lf|\f {d}{du} \widetilde{H}_{j+k} (u)\r|\ls 2^{-2 j} 2^{\f {nk}2} 2^{(j+k)(n+1)}\lf (1+ 2^{j+k}|u|\r)^{-\ell},
\end{equation}
where the implicit positive constants depend on $\ell$, but are independent of $j,\,k$ and $u$.
For any $x,\,y,\,z\in\rn$, by the mean value theorem, there exists a real number $u$ between $|x+y-z|$ and $|x+y|$ such that
\begin{align*}
   &\lf|H_{j+k}(x+y-z)-H_{j+k}(x+y)\r|\\ &\quad=\lf|\widetilde{H}_{j+k}(|x+y-z|)-\widetilde{H}_{j+k}(|x+y|) \r|\\
   &\quad=\lf|\f d{du}\widetilde{H}_{j+k}(u)\r| \lf||x+y-z|-|x+y|\r|\leq |z|\lf|\f d{du}\widetilde{H}_{j+k}(u)\r|,
\end{align*}
which, combined \eqref{3-14}, implies that,  for any given $\ell\in(0,\fz)$ and all $x,\,y,\,z\in\rn$,
\begin{align}\label{3-16}
    &\lf|H_{j+k}(x+y-z)-H_{j+k}(x+y) \r|\\
     &\quad\ls 2^{-2 j} 2^{\f {nk}2} 2^{(j+k)(n+1)}|z|\lf[ \lf( 1+ 2^{j+k} |x+y|\r)^{-\ell}+\lf( 1+ 2^{j+k} |x+y-z|\r)^{-\ell}\r].\notag
\end{align}
On the other hand, using \eqref{3-15},
  we see that
 \begin{align}\label{3-17}
    &\lf|H_{j+k}(x+y-z)-H_{j+k}(x+y)\r |\\
     &\quad\ls 2^{-2j} 2^{\f {nk}2} 2^{(j+k)n}\lf[ \lf( 1+ 2^{j+k} |x+y|\r)^{-\ell}+\lf( 1+ 2^{j+k} |x+y-z|\r)^{-\ell}\r].\notag
\end{align}
Therefore, combining \eqref{3-16} with \eqref{3-17},
we find that, for all $\bz\in [0,1]$,  $\ell\in(1,\fz)$ and  $x,\,y,\,z\in \rn$,
 \begin{align}\label{3-18}
   & \lf|H_{j+k}(x+y-z)-H_{j+k}(x+y)\r|\\
   &\quad\ls 2^{-2 j} 2^{\f {nk}2} 2^{(j+k) n} 2^{(j+k) \bz} |z|^\bz
   \lf[ \lf( 1+ 2^{j+k} |x+y-z|\r)^{-\ell} + \lf(1+ 2^{j+k} |x+y|\r)^{-\ell}\r].\notag
 \end{align}

 Now we turn to show \eqref{3-13}. Assume that $|x|>2|z|$, and  write
 \begin{align*}
    &\lf\|H_{j+k}(x+\cdot-z)-H_{j+k}(x+\cdot)\r\|_{L^2(\Sigma)} \\
    &\quad=\lf\{ \sum_{k\in \mathbb{Z}} \int_{B(0, 2^{-k+4})} \lf|H_{j+k}(x+y-z)-H_{j+k}(x+y)\r|^2\, dy \r\}^{\f12}
    \leq J_1(x, z)+J_2(x, z),
 \end{align*}
 where
 \begin{align*}
   J_1(x, z):=& \lf\{ \sum_{2^{k-8} \ge  |x|^{-1}} \int_{B(0, 2^{-k+4})} \lf|H_{j+k}(x+y-z)-H_{j+k}(x+y)\r|^2\, dy \r\}^{\f12},\\
   J_2(x, z):=& \lf\{\sum_{2^{k-8} < |x|^{-1}} \int_{B(0, 2^{-k+4})} \lf|H_{j+k}(x+y-z)-H_{j+k}(x+y)\r|^2\, dy \r\}^{\f12}.
 \end{align*}

  It is relatively easier to estimate $J_1(x, z)$. Indeed, when $|x|\ge 2^{8-k} \ge 2^4 |y|$ and $|z|< \f 12 |x|$, we have
 $$|x+y|\sim |x+y-z|\sim |x|.$$
 From this and \eqref{3-16}, we deduce that, for any $\ell\ge n+\f32$,
 \begin{align*}
    \lf[J_1(x,z)\r]^2 &\ls 2^{-4 j}|z|^2 \sum_{2^k \ge 2^8 |x|^{-1}} 2^{2(j+k)(n+1)} \lf(2^{j+k}|x|\r)^{-2\ell}\\
    &\ls 2^{-4 j} |z|^2 2^{2j(n+1)} |x|^{-2(n+1)} 2^{-2j\ell},
 \end{align*}
 which further implies that, for all $i\in(1,\fz)$,
 $$ J_1(x,z) \ls2^{-ji} \f {|z|}{|x|^{n+1}},\ \ \ |x|>2|z|.$$

Finally, we estimate $J_2(x,z)$. Notice that, when $|x|\leq 2^{-k+8}$, $|z|< \f 12 |x|$ and $|y|\leq 2^{-k+4}$, one has $|x+y|\leq 2^{-k+9}$ and $|x+y-z|\leq 2^{-k+9}$.
 Therefore, from \eqref{3-18}, we deduce that, for any $\bz\in (0,1)$,
 \begin{align*}
 &\int_{B(0, 2^{-k+4})} \lf|H_{j+k}(x+y-z)-H_{j+k}(x+y)\r|^2\, dy \\
 &\quad\ls 2^{-4 j} 2^{nk} 2^{2(j+k)n} 2^{2(j+k)\bz} |z|^{2\bz} \int_{B(0, 2^{-k+9})} (1+2^{j+k}|\nu|)^{-n-1} \, d\nu\\
 &\quad\ls |z|^{2\bz} 2^{2k(n+\bz)}  2^{j(n+2\bz -4)},
\end{align*}
which implies that
\begin{align*}
    \lf[J_2(x,z)\r]^2 \ls2^{j(n+2\bz -4)} |z|^{2\bz} \sum_{2^k \leq 2^8 |x|^{-1}} 2^{2k(n+\bz)}
    \ls 2^{j(n+2\bz -4)} |z|^{2\bz}|x|^{-2(n+\bz)},
\end{align*}
and hence
$$J_2(x, z) \ls 2^{j(\f n2 +\bz-2)}|x|^{-(n+\bz)}|z|^\bz.$$
This, together with the estimate of $J_1(x,z)$, establishes the desired estimate \eqref{3-13}
and hence finishes the proof of Lemma \ref{lem-3-4'}.
\end{proof}

Now we prove Lemma \ref{Lem'}.
\begin{proof} [Proof of Lemma \ref{Lem'}]
We first prove \eqref{2-1-1'}. Notice that, when $p\in[2,\fz)$, \eqref{2-1-1'}
follows from Lemma \ref{Lem p>2}(ii), since
 $$\mathcal{S}\lf(f,\f{\lap f}{4+2n}\r)(x)\sim\widetilde{S}( \widetilde{f})(x)\sim\lf\{ \int_0^\infty \dashint_{B(0,t)} |\widetilde{f}\ast \mathcal{K}_t(x+y)|^2\, dy \f {dt}{t}\r\}^{\f12},$$
 where $\widetilde{f}:=-\lap f$. When $n\in\{1,2,3\}$ and $p\in(1,2)$,
 the desired conclusion is deduced from Theorem \ref{Thm0}.
 Thus, it suffices to prove \eqref{2-1-1'} in the case when $n\in[4,\fz)\cap\mathbb{N}$ and $p\in(\f{2n}{4+n},2)$.

By Lemmas \ref{lem-2-5'}, \ref{lem-3-3} and \ref{lem-3-4'}, we see that,
for any given $\va\in (0,1/2)$ and all $f\in L^q(\rn)$,
\begin{equation}\label{te6}
    \|\mathcal{T}_j(f)\|_{L^q(\rn)} \ls \lf\{1+2^{[n(\f 1q-\f12)-2+\va]|j|}\r\}\|f\|_{L^q(\rn)},\   \  j\in \mathbb{Z},\   \  q\in(1,2).
\end{equation}
For given $p\in(1,2)$, let $\va\in (0,1/2)$ be  small enough  such that
$\t=\f 2p -1+\f \va 2\in (0,1)$.  Let $q_\va$ be such that $\f 1p =\f \t {q_\va} +\f{1-\t}2$. Then
$ \f 1p -\f 12 =\t (\f 1{q_\va}-\f 12)$, which, in particular,  implies that $1<q_\va<2$ since $\t>2(\f 1p-\f12)$.  Thus, using \eqref{te6}, Lemma \ref{lem-2-4'} and the Marcinkiewicz interpolation theorem (see, for example, \cite[Theorem 1.3.2]{lg08}), we  conclude that, for all $j\in\mathbb{Z}$ and $f\in L^p(\rn)$,
\begin{align*}
    \|\mathcal{T}_j(f)\|_{L^p(\rn)} &\ls\lf\{2^{|j| \t [n(\f 1{q_\va}-\f12)-2+\f \va \t]} 2^{-2(1-\t)|j|}+ 2^{-2(1-\t)|j|}\r\}\|f\|_{L^p(\rn)}\\
    &\sim \lf\{2^{-|j|[ 2-n(\f 1p-\f12)]}+ 2^{-4|j|(1-\f 1p)}\r\}2^{|j|\va} \|f\|_{L^p(\rn)}.
\end{align*}
If $\f {2n}{4+n}<p<2$, then $n(\f 1p-\f12)<2$ and hence
 we may choose $\va\in (0,1/2)$ small enough in the above estimate such that
 $$\d:=\min\lf\{ 2-n\lf(\f 1p-\f12\r)-\va, 4\lf(1-\f1p\r)-\va\r\}>0.$$
 We then conclude that, for all $f\in L^p(\rn)$,
 $$\|\mathcal{T}_j(f)\|_{L^p(\rn)}\ls 2^{-|j|\d}\|f\|_{L^p(\rn)},\   \  j\in\mathbb{Z}.$$
The desired inequality \eqref{2-1-1'} then follows from \eqref{decom}.
This finishes the proof of \eqref{2-1-1'}.

Next, we show that, if \eqref{2-1-1'} holds true, then one must have  $p\ge \f {2n}{4+n}$. We only need to show the case when $n\in[5,\fz)\cap\mathbb{Z}$,
since we always assume $p\in(1,\fz)$. To this end, let $\phi$ be a radial Schwartz function on $\rn$ such that $$\chi_{[\f 12,2]}(|\xi|) \leq \widehat{\phi}(\xi)\leq \chi_{[\f 14, 4]}(|\xi|),\   \  \xi\in \rn$$
and $\phi_j(\cdot):=2^{jn}\phi(2^j\cdot)$, where $j$ is a sufficiently large positive integer. Then, for $x\in\rn$, we have
\begin{align}\label{4-1'}
\widetilde{S}(\phi_j)(x)
&\gtrsim \lf\{\int_1^2 \int_{B(0,1)} \lf| \f {B_t (\phi_j\ast I_2)(x+y) -(\phi_j\ast I_2)(x+y)}{t^2}\r.\r.\\
&\hs\hs\lf. +B_t(\phi_j)(x+y)\Bigg|^2\, dy dt\r\}^{\f12}\gtrsim J_{j,1}(x) -J_{j,2}(x)-J_{j,3}(x),\notag
\end{align}
where
\begin{align*}
J_{j,1}(x)&:=2^{-2}\lf[ \int_{B(0,1)} |(\phi_j\ast I_2)(x+y)|^2 \, dy \r]^{\f12},\\
J_{j,2}(x) &:=\lf[\int_1^2 \int_{B(0,1)} |B_t (\phi_j\ast I_2)(x+y)|^2 \, dy dt\r]^{\f12},\\
J_{j,3}(x)&:=\lf[ \int_1^2 \int_{B(0,1)} |B_t \phi_j(x+y)|^2 \, dy dt\r]^{\f12}.
\end{align*}

Notice that, for all $\xi\in\rn$,
\begin{align*}
    (\phi_j\ast I_2)^{\wedge}(\xi) =2^{-2 j} m(2^{-j}\xi),
\end{align*}
where
$m(\xi):=\f {\widehat{\phi}(\xi)}{|\xi|^2}$ is a nonnegative radial $C^\infty$ function
supported on $\{\xi\in\rn:\ \f 14\leq |\xi|\leq 4\}$ and satisfies that, for all $\xi\in\rn$,
$0\leq m(\xi)\leq 16$
and, for all $\f 12 \leq |\xi|\leq 2$,
\begin{align*}
\f 1{4} \leq m(\xi)=\f 1{|\xi|^2}\leq 4.
\end{align*}
We rewrite $ \phi_j\ast I_2(z) =2^{-2j} K_j(z),$
where $K_j(z):=2^{jn} m^\vee(2^jz)$ for all $z\in\rn$.
Then
\begin{align}\label{se8}
  \lf[J_{j,1}(x)\r]^2 =2^{-4 j-4} \int_{B(0,1)} |K_j(x+y)|^2\, dy.
\end{align}
Assume $|x|\leq \f 12$. Since $m$ is a Schwartz function, we know that, for any $\ell>n$,
\begin{align*}
    \int_{|y|\ge 1} |K_j(x+y)|^2\, dy &\ls 2^{2jn} \int_{|y|\ge 1} (1+ 2^j|y+x|)^{-4\ell}\, dy\\
    &\ls 2^{2jn} \int_{|y|\ge 1} (1+2^j|y|)^{-4\ell}\, dy
    \ls2^{-j\ell}\ls2^{-4j},
\end{align*}
which, together with \eqref{se8}, implies that, for any $|x|\leq \f 12$,
\begin{align*}
    \lf[J_{j,1}(x)\r]^2 &\gtrsim 2^{-4 j-4} \int_{\rn} |K_j(x+y)|^2\, dy -2^{-4j}
    \sim2^{-4 j-4}  \int_{\rn} |\widehat{K_j}(\xi)|^2\, d\xi-2^{-4 j}\\
    & \gtrsim2^{-4j-4} \int_{2^{j-1}\leq |\xi|\leq 2^{j+1}} |m(2^{-j}\xi)|^2\, d\xi-2^{-4 j}
    \gtrsim 2^{-4 j+jn},
\end{align*}
where, in the last inequality, we used the assumption that $j\in\mathbb{N}$ is sufficiently large and the definition of $m$.
This further implies that
\begin{equation}\label{4-6'}
   \|J_{j,1}\|_{L^p(\rn)} \gtrsim \lf(\int_{|x|\leq \f 12} |J_{j,1}(x)|^p\, dx \r)^{\f1p} \gtrsim 2^{-2 j+\f {jn}2}.
\end{equation}

For the term $J_{j,2}$, we see that, for all $x\in\rn$,
\begin{align*}
    J_{j,2}(x) &=\lf[\int_1^2 \int_{B(0,1)} \lf|\dashint_{B(x+y,t)} (\phi_j \ast I_2)(z)\, dz\r|^2 \, dy\, dt\r]^{\f12}\\
    &\ls\lf\{\int_1^2 \int_{B(0,1)} \lf[\dashint_{B(x,3)} |\phi_j \ast I_2(z)|\, dz\r]^2 \, dy\, dt\r\}^{\f12}\\
& \ls M(\phi_j\ast I_2)(x) =2^{-2 j} M(K_j)(x),
\end{align*}
which, together with the boundedness of $M$ on $L^p(\rn)$ with $p\in(1,\fz]$, further implies that
\begin{equation}\label{4-4'}
    \|J_{j,2}\|_{L^p(\rn)} \ls2^{-2 j}\|K_j\|_{L^p(\rn)}\ls 2^{-2 j} 2^{jn(1-\f1p)}.
\end{equation}

 For the third term $J_{j,3}$, we find that, for all $x\in\rn$,
\begin{align*}
    \lf[J_{j,3}(x)\r]^2
    & \ls\int_1^2 \int_{B(0,1)} \lf[\dashint_{B(x,3)} |\phi_j(z)|\, dz\r]^2 \, dy \, dt
    \ls [M\phi_j(x)]^2.
\end{align*}
From this and the boundedness of $M$ on $L^p(\rn)$ with $p\in(1,\fz]$, we deduce that
\begin{equation}\label{4-5}
    \|J_{j,3}\|_{L^p(\rn)}\ls\|\phi_j\|_{L^p(\rn)} \ls 2^{jn(1-\f 1p)}.
\end{equation}

Thus, combining the estimates \eqref{4-1'}, \eqref{4-6'}, \eqref{4-4'} and \eqref{4-5} with the assumption \eqref{2-1-1'}, we conclude that, for all sufficiently large $j\in\mathbb{N}$,
\begin{align*}
    2^{j(-2+\f n2)} &\ls \|J_{j,1}\|_{L^p(\rn)} \ls\|\widetilde{S}(\phi_j)\|_{L^p(\rn)} +
    \|J_{j,2} \|_{L^p(\rn)}+\|J_{j,3} \|_{L^p(\rn)}\\
    &\ls \|\phi_j\|_{L^p(\rn)} +2^{jn(1-\f 1p)}\ls 2^{jn(1-\f 1p)},
\end{align*}
which implies that
$ -2+\f n2 \leq n(1-\f 1p)$ and hence
$ p\ge \f {2n}{n+4}$.
This finishes the proof of Lemma \ref{Lem'}.
 \end{proof}
Now we are ready to show Theorem \ref{Thm2}
\begin{proof}[Proof of Theorem \ref{Thm2}]
We first prove (I). The equivalence between (ii) and (iii) of Theorem \ref{Thm2}(I) is from Lemma \ref{e34x}. This equivalence holds true for all $p\in(1,\fz)$ and $n\in\nn$.

To complete the proof, it suffices to prove the equivalence between (i) and (ii) of Theorem \ref{Thm2}(I).
Clearly, (i) $\Longrightarrow$ (ii) follows from Lemma
\ref{Lem'} with $n\in[4,\fz)\cap\mathbb{N}$ and $p\in(\f{2n}{4+n},2)$,
since, for all $x\in\rn$,
 \begin{eqnarray}\label{se7}
 \mathcal{S}\lf(f,\f{\lap f}{4+2n}\r)(x)=\widetilde{S}( \widetilde{f})(x)=\lf\{ \int_0^\infty \dashint_{B(0,t)} |\widetilde{f}\ast \mathcal{K}_t(x+y)|^2\, dy \f {dt}{t}\r\}^{\f12},
 \end{eqnarray}
 where $\widetilde{f}:=-\lap f$. Moreover, for all $f\in W^{2,p}(\rn)$,
\begin{equation}\label{(p,p)1'}
  \lf\|\CS\lf(f,\f{\lap f}{4+2n}\r)\r\|_{L^p(\rn)}\ls \|\lap  f\|_{L^p(\rn)}.
\end{equation}

Now we show (ii) $\Longrightarrow$ (i).
Assume $f,\,g\in L^p(\rn)$ such that $\CS(f,g)\in L^p(\rn)$. We shall prove that $g$
coincides with $\lap f$ modulo a positive constant. To this end,
take a non-negative radial $C^\fz$ function $\phi$ which is supported in $B(0,1)$
such that $\|\phi\|_{L^1(\rn)}=1$ and, for all $\ez\in (0,\fz)$ and $x\in\rn$,
let $\phi_\ez(x):=\ez^{-n}\phi\lf(x/\ez\r)$,
$f_\ez:=f\ast\phi_\ez$ and $g_\ez:=g\ast\phi_\ez$. Then $f_\ez\in W^{2,p}(\rn)$. Therefore,
by the above proved conclusion (i) $\Longrightarrow$ (ii),
we see that $\CS(f_\ez,\f{\lap f_\ez}{2n+4})\in L^p(\rn)$. By the Minkowski inequality, for all $x\in\rn$, we have
\begin{eqnarray}\label{se6}
\cs\lf(f_\ez,g_\ez\r)(x)&&=\lf\{\int_0^\fz \int_{B(x,t)}\lf|\lf[\frac{B_tf-f}{t^2}-B_tg\r]\ast\phi_\ez(y)\r|^2 \,dy\, \frac{dt}{t^{n+1}}\r\}^{\frac12}\\
   &&=\lf\{\int_0^\fz \int_{B(x,t)}\lf|\int_\rn\lf[\frac{B_tf(y-z)-f(y-z)}{t^2}\noz\r.\r.\r.\\
   &&\lf.\lf.\quad- B_tg(y-z)\Bigg]\phi_\ez(z)\,dz\r|^2 \,dy\, \frac{dt}{t^{n+1}}\r\}^{\frac12} \noz\\
   &&\le\int_\rn\lf\{\int_0^\fz \int_{B(x-z,t)}\lf|\frac{B_tf(y)-f(y)}{t^2}-B_tg(y)\r|^2 \,dy\, \frac{dt}{t^{n+1}}\r\}^{\frac12}\noz\\
   &&\quad\times\phi_\ez(z)\,dz \noz \\
   &&=\int_\rn \CS(f,g)(x-z)\phi_\ez(z)\,dz
   =\CS(f,g)\ast\phi_\ez(x).\noz
\end{eqnarray}
For all $x\in\rn$ and $\ez\in (0,\fz)$, define
\begin{eqnarray*}
  D_\ez(x) :=&& \lf\{\int_0^\fz \int_{B(x,t)}
  \lf|B_t g_\ez(y)-\frac{1}{2n+4}B_t\lf(\lap f_\ez\r)(y)\r|^2 \,dy\,\frac{dt}{t^{n+1}}\r\}^{\frac12} \\
   =&& |B(0,1)|^{1/2}\lf\{\int_0^\fz \dashint_{B(x,t)}\lf|\dashint_{B(y,t)}
   \lf[g_\ez(z)-\frac{1}{2n+4}\lap f_\ez(z)\r]\,dz\r|^2 \,dy\,\frac{dt}{t}\r\}^{\frac12}.
\end{eqnarray*}
Then
$$D_\ez(x)\ls \CS\lf(f_\ez,\f{\lap f_\ez}{2n+4}\r)(x)+\CS\lf(f_\ez,g_\ez\r)(x)\le\CS\lf(f_\ez,\f{\lap f_\ez}{2n+4}\r)(x)
+\CS\lf(f,g\r)\ast\phi_\ez(x),$$
 which implies that $D_\ez\in L^p(\rn)$ for $\ez\in (0,\fz)$ since $\CS(f_\ez,\f{\lap f_\ez}{2n+4})\in L^p(\rn)$ and
$\CS(f,g)\in L^p(\rn)$. Therefore, for all $\ez\in (0,\fz)$, $D_\ez(x)<\fz$ almost
everywhere. By the H\"{o}lder inequality, we know that
\begin{eqnarray*}
 E_\ez(x) :=&& |B(0,1)|^{1/2}\lf\{\int_0^\fz \lf|\dashint_{B(x,t)}\dashint_{B(y,t)} \lf[g_\ez(z)-\frac{1}{2n+4}\lap f_\ez(z)\r]\,dz\,dy\r|^2 \,\frac{dt}{t}\r\}^{\frac12}\\
   \le&& |B(0,1)|^{1/2}\lf\{\int_0^\fz\dashint_{B(x,t)}\lf|\dashint_{B(y,t)}
   \lf[g_\ez(z)-\frac{1}{2n+4}\lap f_\ez(z)\r]\,dz\r|^2\, dy\,\frac{dt}{t}\r\}^{\frac12}=D_\ez(x),
\end{eqnarray*}
which implies that $E_\ez(x)<\fz$ almost everywhere. From the Lebesgue differentiation theorem,
we deduce that, for all $\ez\in (0,\fz)$ and $x\in\rn$,
$$\lim_{t\to 0^+}\dashint_{B(x,t)}\dashint_{B(y,t)}
\lf[g_\ez(z)-\frac{1}{2n+4}\lap f_\ez(z)\r]\,dz\,dy=g_\ez(x)-\frac{1}{2n+4}\lap f_\ez(x),$$
where $t\to 0^+$ means $t>0$ and $t\to0$. Therefore, for almost every $x\in\rn$,
\begin{equation}\label{te7}
g_\ez(x)-\frac{1}{2n+4}\lap f_\ez(x)=0.
\end{equation}
By the continuity of $g_\ez$ and $\lap f_\ez$, for any $\ez\in(0,\fz)$, \eqref{te7}
holds true for every $x\in\rn$. Hence, $\frac 1{2n+4}\lap f_{\ez}\to g$ in $L^p(\rn)$ as $\ez\to0^+$.
Since $f_{\ez}\to f$ in $L^p(\rn)$ as $\ez\to0^+$, it follows that $\lap f_{\ez}\to\lap f$
as $\ez\to0^+$ in the sense of distribution. Thus, $\frac1{2n+4}\lap f=g$ almost everywhere
and hence $f\in W^{2,p}(\rn)$.
This proves (ii) $\Longrightarrow$ (i).

The proof of the inverse inequality of \eqref{(p,p)1'} is similar to that of the
inverse inequality of \cite[(2.14)]{hyy15}, the details being omitted. Thus, we complete the proof of Theorem \ref{Thm2}(I).

Now we prove  Theorem \ref{Thm2}(II). By \eqref{se7} and Lemma \ref{Lem'}, we know that, if (i) of Theorem \ref{Thm2}(I)
is equivalent to either (ii) or (iii), then we have
 $p\in[\f{2n}{4+n},\fz)$. This implies Theorem \ref{Thm2}(II) and hence finishes the proof of Theorem \ref{Thm2}.
\end{proof}

\begin{remark}\label{re1.1}
(i) It is still unclear whether the equivalence between (i) and either (ii) or (iii) of  Theorem \ref{Thm2}(I)
remains true in the endpoint case $p=\f{2n}{4+n}$ or not.

(ii) By Theorem \ref{Thm2}, we see that, in its item (I), (i)
is equivalent to (ii) for all $p\in(1,\fz)$ if and only if $n\in\{1,2,3,4\}$. Indeed, if $n\in\{1,2,3,4\}$,
then, from Theorems \ref{Thm0} and \ref{Thm2}, we deduce that (i) of Theorem \ref{Thm2}(I) is equivalent to (ii) of
Theorem \ref{Thm2}(I) for all $p\in(1,\fz)$. Conversely, if (i) is equivalent to (ii) for all $p\in(1,\fz)$, then, by Lemma \ref{Lem'}, one must have $n\in\{1,2,3,4\}$.
\end{remark}

\section{Proof of Theorem \ref{Thm1}\label{s2}}
\hskip\parindent
 For all $\al\in(0,2)$, $t\in(0,\fz)$ and $x\in\rn$, let   $\chi(x):=\f 1 {|B(0,1)|} \chi_{B(0,1)} (x)$,
 $\chi_t(x):=t^{-n} \chi(x/t)$ and \begin{align*}
    K(x):=\chi\ast I_\al(x) -I_\al(x),
 \end{align*}
 where  $I_\al$ is a distribution defined by $\widehat{I_\al}(\cdot):=|\cdot|^{-\al}$.  Let
 $K_t(x)=t^{-n} K(x/t)$ for all $x\in\rn$ and $t\in(0,\fz)$. Then
 \begin{equation}\label{1-4}\widehat{K}(t\xi) = \f{\widehat{\chi}(t\xi)-1}{t^\al|\xi|^{\al}}
 \ \ \ \ {\rm and}\ \ \ \
 f\ast K_t (x) = \f {(B_t -I)(f\ast I_\al)(x)}{t^\al}.\end{equation}

We have the following conclusions.

\begin{lemma}\label{lem0}
For all $\al\in(0,2)$, there exists a positive constant $C_{(\al)}$, depending on $\al$,
such that, for all $\xi\in\rn$,
 $$|\widehat{K}(\xi)|\leq C_{(\al)} \min\{ |\xi|^{2-\al}, |\xi|^{-\al}\}.$$
\end{lemma}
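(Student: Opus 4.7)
The key formula is \eqref{1-4}, which states that $\widehat{K}(\xi) = (\widehat{\chi}(\xi)-1)/|\xi|^\alpha$, so the whole task is to control $|\widehat{\chi}(\xi)-1|$ by $|\xi|^2$ near the origin and by a constant at infinity, and then divide by $|\xi|^\alpha$. The plan is to split into the two regimes $|\xi|\leq 1$ and $|\xi|\geq 1$ and obtain the two bounds separately.

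First, in the regime $|\xi|\leq 1$, I would invoke Lemma \ref{lem0'}(ii) (or equivalently the simpler computation in Lemma \ref{lem0'}(iii)), which tells us that $(\widehat{\chi}(\xi)-1)/|\xi|^2$ extends to a bounded $C^\infty$ function on $\rn$. In particular,
\begin{equation*}
|\widehat{\chi}(\xi)-1|\lesssim |\xi|^2\quad \text{for all } \xi\in\rn,
\end{equation*}
so that, by \eqref{1-4},
\begin{equation*}
|\widehat{K}(\xi)|=\frac{|\widehat{\chi}(\xi)-1|}{|\xi|^\alpha}\lesssim |\xi|^{2-\alpha}, \qquad |\xi|\leq 1.
\end{equation*}

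Next, in the regime $|\xi|\geq 1$, I would use the decay estimate from Lemma \ref{lem0'}(i), which in particular gives $|\widehat{\chi}(\xi)|\leq C$ uniformly in $\xi\in\rn$. Hence $|\widehat{\chi}(\xi)-1|\leq C+1$, and therefore
\begin{equation*}
|\widehat{K}(\xi)|=\frac{|\widehat{\chi}(\xi)-1|}{|\xi|^\alpha}\lesssim |\xi|^{-\alpha},\qquad |\xi|\geq 1.
\end{equation*}

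Finally, I would combine the two pieces: since $\alpha\in(0,2)$ so that $2-\alpha>0$, one has $|\xi|^{2-\alpha}\leq |\xi|^{-\alpha}$ whenever $|\xi|\leq 1$, and $|\xi|^{-\alpha}\leq |\xi|^{2-\alpha}$ whenever $|\xi|\geq 1$. Thus in both regimes the bound obtained coincides with the minimum $\min\{|\xi|^{2-\alpha},\,|\xi|^{-\alpha}\}$, establishing the claim with a constant $C_{(\alpha)}$ depending only on $\alpha$ (through the implicit constants in Lemma \ref{lem0'}(i)--(iii) and the dimension $n$). No genuine obstacle appears; the proof is a direct analogue of Lemma \ref{lem0'}(iv) with the exponent $2$ replaced by $\alpha$, and it relies only on the small-$\xi$ Taylor behavior of $\widehat{\chi}$ at $0$ (which uses radial symmetry of $\chi$) together with boundedness of $\widehat{\chi}$.
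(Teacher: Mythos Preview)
Your proof is correct and follows essentially the same approach as the paper: split into $|\xi|\le 1$ and $|\xi|\ge 1$, use the quadratic vanishing of $\widehat{\chi}-1$ at the origin for the first regime and the boundedness of $\widehat{\chi}$ for the second. The only cosmetic difference is that you cite Lemma~\ref{lem0'}(i)--(ii) for these facts, whereas the paper redoes the Taylor expansion of $\widehat{\chi}(\xi)=\dashint_{B(0,1)}e^{-ix_1|\xi|}\,dx$ directly; the content is the same.
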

\begin{proof}
Since $\chi$ is radial, we know that $\widehat{\chi}$ is also radial. Hence, for all
$\xi\in\rn$,
\begin{eqnarray*}
\widehat{\chi}(\xi)=\widehat{\chi}(|\xi|e_1)=\dashint_{B(0,1)}e^{-ix_1|\xi|}\,dx,
\end{eqnarray*}
where $e_1:=(1,0,\ldots,0)\in\rn$.
Therefore, if $|\xi|\leq1$, then, by the Taylor expansion, we see that
\begin{eqnarray*}
\lf|\widehat{K}(\xi)\r|&&=\lf|\f{\widehat{\chi}(\xi)-1}{|\xi|^{\al}}\r|\\
&&=\lf|-\f{|\xi|^{2-\al}}2\dashint_{B(0,1)}x_1^2\,dx+
o(|\xi|^{2-\al})\dashint_{B(0,1)}x_1^4\,dx\r|
\ls|\xi|^{2-\al}.
\end{eqnarray*}
 If $|\xi|\ge1$, then, since $\widehat{\chi}$ is the Fourier transform of an integrable function,
  it follows that $\widehat{\chi}$ is a bounded function, which completes the proof of Lemma \ref{lem0}.
\end{proof}
\begin{lemma}\label{Lem}
Let $\al\in(0,2)$ and $p\in(\max\{1, \f{2n}{2\al+n}\},\fz)$. Then there exists a positive constant $C$
such that, for all $f\in L^p(\rn)$,
 \begin{equation}\label{2-1-1}\|\wt{S} (f)\|_{L^p(\rn)} \leq C \|f\|_{L^p(\rn)},\end{equation}
 where $$\wt{S}( f)(x):=\lf\{\int_0^\infty \dashint_{B(0,t)} |f\ast K_t(x+y)|^2\, dy \f {dt}{t}\r\}^{\f12}, \ \ \ x\in\rn.$$
 In particular, if $n\leq2\al$, then \eqref{2-1-1} holds true for all $p\in(1,\fz)$.

 Conversely, if \eqref{2-1-1} holds true for some $p\in(1,\fz)$ and all $f\in L^p(\rn)$, then $p\ge \f {2n}{2\al+n}$.
\end{lemma}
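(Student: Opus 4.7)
The plan is to mirror the six-step architecture of the proof of Lemma \ref{Lem'} in Section \ref{s3}, replacing the exponent $2$ by $\alpha$ throughout, the second-order kernel $\mathcal{K}$ by the fractional kernel $K$, and invoking the decay $|\widehat{K}(\xi)|\ls \min\{|\xi|^{2-\alpha},|\xi|^{-\alpha}\}$ of Lemma \ref{lem0} in place of Lemma \ref{lem0'}(iv). First I would pick radial Schwartz bumps $\varphi,\psi\in\cs(\rn)$ with $\supp\widehat{\varphi}\subset\{1/2\leq|\xi|\leq 2\}$, $\sum_{j\in\mathbb{Z}}\widehat{\varphi}(2^{-j}\xi)=1$ for $\xi\neq 0$, and $\widehat{\psi}\equiv 1$ on $\supp\widehat{\varphi}$, and then majorize $\widetilde{S}(f)(x)\ls \sum_{j\in\mathbb{Z}}\mathcal{T}_j(f)(x)$ exactly as in \eqref{decom}, with
\begin{equation*}
\mathcal{T}_j(f)(x):=\lf[\sum_{k\in\mathbb{Z}}\int_{2^{-k}}^{2^{-k+1}}\dashint_{B(0,2^{-k+1})}|f\ast\varphi_{2^{-j-k}}\ast K_t(x+y)|^2\, dy\,\f{dt}{t}\r]^{1/2}.
\end{equation*}
The task reduces to producing $\delta>0$ for which $\|\mathcal{T}_j(f)\|_{L^p(\rn)}\ls 2^{-\delta|j|}\|f\|_{L^p(\rn)}$, and summing a geometric series.

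Plancherel together with Lemma \ref{lem0} yields the $L^2$ building block (analog of Lemma \ref{lem-2-4'}),
\begin{equation*}
\|\mathcal{T}_j(f)\|_{L^2(\rn)}\ls \min\{2^{(2-\alpha)j},\,2^{-\alpha j}\}\|f\|_{L^2(\rn)},
\end{equation*}
which decays in $|j|$ on both sides of $j=0$ because $\alpha\in(0,2)$. For $j\leq 0$, I would follow the route of Lemma \ref{lem-2-5'}: using \eqref{1-4} and the smoothness of the Schwartz multiplier $m_{j,k}(\xi):=\widehat{\varphi}(2^{-j}\xi)[1-\widehat{\chi}(2^kt\xi)]/|2^kt\xi|^\alpha$ (whose derivatives stay uniformly bounded because $|\xi|\leq 2^{j+1}\leq 2$ keeps $\widehat{\chi}-1$ in its polynomial-expansion regime near the origin), one arrives at the pointwise majorization $|\mathcal{T}_j(f)(x)|^2\ls \sum_k [M(f\ast\varphi_{2^{-j-k}})(x)]^2$; the Fefferman-Stein vector-valued inequality and the Littlewood-Paley characterization of $L^p(\rn)$ then give $\|\mathcal{T}_j(f)\|_{L^p(\rn)}\ls \|f\|_{L^p(\rn)}$ uniformly in $j\leq 0$, and Marcinkiewicz interpolation with the $L^2$-decay $2^{(2-\alpha)j}$ converts this into summable decay for every $p\in(1,\infty)$.

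The main obstacle lies in the regime $j\geq 0$. Paralleling Lemma \ref{lem-3-3}, I would split $\mathcal{T}_j\leq \mathcal{T}_{j,1}+\mathcal{T}_{j,2}$, where $\mathcal{T}_{j,2}$ is again absorbed by Fefferman-Stein and $\mathcal{T}_{j,1}$ carries the factor $f\ast I_\alpha\ast\varphi_{2^{-j-k}}$ weighted by $2^{2\alpha k}$ over balls $B(x,2^{-k+4})$. The critical analytic input is the fractional analog of Lemma \ref{lem-3-4'}:
\begin{equation*}
\|\mathcal{T}_{j,1}(f)\|_{L^p(\rn)}\ls 2^{[n(1/p-1/2)-\alpha+\varepsilon]j}\|f\|_{L^p(\rn)},\qquad p\in(1,2],\ j\geq 0.
\end{equation*}
To prove it I would recast $\mathcal{T}_{j,1}(f)\sim \|A_j(f)\|_{L^2(\Sigma)}$ for the vector-valued convolution operator $A_j(f)(x;y,k):=2^{(n/2+\alpha)k}f\ast I_\alpha\ast\varphi_{2^{-j-k}}(x+y)$, whose kernel is $H_{j+k}(z):=2^{-\alpha j}2^{nk/2}2^{(j+k)n}m^\vee(2^{j+k}z)$ with $m(\xi):=\widehat{\varphi}(\xi)/|\xi|^\alpha\in C_c^\infty(\rn)$; Plancherel gives $\|A_j\|_{L^2\to L^2(\Sigma)}\ls 2^{-\alpha j}$. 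The delicate step, where $\alpha$ really enters, is to verify the vector-valued H\"ormander condition
\begin{equation*}
\|H_{j+k}(x+\cdot-z)-H_{j+k}(x+\cdot)\|_{L^2(\Sigma)}\ls 2^{j(n/2+\beta-\alpha)}\f{|z|^\beta}{|x|^{n+\beta}},\qquad |x|>2|z|,\ \beta\in(0,1),
\end{equation*}
by splitting the $k$-sum according to whether $2^{-k}\leq|x|$ or not, and feeding in the mean-value and Schwartz-decay estimates of the type \eqref{3-15}--\eqref{3-18} with $\alpha$ in place of $2$ in the overall prefactor; the vector-valued Calder\'on-Zygmund theorem of \cite[Theorem 3.4]{G-CF} then produces a weak $(1,1)$-bound with constant $2^{j(n/2+\beta-\alpha)}$, and Marcinkiewicz interpolation with the $L^2$-bound yields the displayed inequality. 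A final real interpolation with the parameter $\theta:=2/p-1+\varepsilon/2$, exactly as at the end of the proof of Lemma \ref{Lem'}, gives $\|\mathcal{T}_j(f)\|_{L^p(\rn)}\ls 2^{-\delta|j|}\|f\|_{L^p(\rn)}$ with $\delta:=\min\{\alpha-n(1/p-1/2)-\varepsilon,\,2\alpha(1-1/p)-\varepsilon\}>0$ precisely when $p>\max\{1,2n/(2\alpha+n)\}$.

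For the sharpness, I would copy the test-function scheme that closes the proof of Lemma \ref{Lem'}: choose radial $\phi\in\cs(\rn)$ with $\chi_{[1/2,2]}(|\xi|)\leq\widehat{\phi}(\xi)\leq\chi_{[1/4,4]}(|\xi|)$, set $\phi_j:=2^{jn}\phi(2^j\cdot)$ for large $j\in\mathbb{N}$, and use \eqref{1-4} to write $\phi_j\ast I_\alpha=2^{-\alpha j}K_j$ with $K_j(z):=2^{jn}m^\vee(2^jz)$ for the $C_c^\infty$-bump $m(\xi):=\widehat{\phi}(\xi)/|\xi|^\alpha$. On $|x|\leq 1/2$ the dominant contribution to $\widetilde{S}(\phi_j)(x)$ is of size $2^{-\alpha j}[\int_{B(0,1)}|K_j(x+y)|^2\,dy]^{1/2}\gtrsim 2^{j(n/2-\alpha)}$, while the remaining $B_t(\phi_j\ast I_\alpha)$ and $B_t\phi_j$ pieces are pointwise dominated by $2^{-\alpha j}M(K_j)(x)$ and $M(\phi_j)(x)$, hence of $L^p(\rn)$-norm at most $2^{jn(1-1/p)}$. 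The assumption \eqref{2-1-1} together with $\|\phi_j\|_{L^p(\rn)}\ls 2^{jn(1-1/p)}$ then forces $2^{j(n/2-\alpha)}\ls 2^{jn(1-1/p)}$; letting $j\to\infty$ yields $n/2-\alpha\leq n(1-1/p)$, i.e., $p\geq 2n/(2\alpha+n)$, which completes the plan.
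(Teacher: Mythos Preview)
Your plan is correct and mirrors the paper's own proof almost step for step: the same Littlewood--Paley decomposition into $T_j$, the same $L^2$ bound via Plancherel and Lemma~\ref{lem0}, the same Fefferman--Stein argument for $j\le 0$, the same vector-valued Calder\'on--Zygmund/H\"ormander estimate for $j\ge 0$, and the same test function $\phi_j$ for sharpness. Two small points where you copied the $\alpha=2$ case too literally: since $K=\chi\ast I_\alpha-I_\alpha$ has no $\tfrac{1}{2n+4}\chi$ term, there is no $\mathcal{T}_{j,2}$ piece or $B_t\phi_j$ piece to split off (the paper simply bounds $T_j\lesssim T_{j,1}$ directly in Lemma~\ref{lem-3-4}); and because the $L^2$ decay is now asymmetric ($2^{-\alpha j}$ for $j>0$ versus $2^{-(2-\alpha)|j|}$ for $j\le 0$), the second entry of your $\delta$ should involve $2-\alpha$ rather than $2\alpha$---but either expression is positive on $(1,2)$, so the constraint $p>\frac{2n}{2\alpha+n}$ comes solely from the first entry, and your conclusion stands.
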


In order to show Lemma \ref{Lem}, we need some technical lemmas.
Let $\varphi,\,\psi\in \cs(\rn)$ be the same as in the proof of Lemma \ref{Lem'}.
Similar to \eqref{decom}, for all $x\in\rn$, we have
\begin{align}\label{decom'}
\wt{S}(f)(x)\ls\sum_{j\in\mathbb{Z}}T_j(f)(x),
\end{align}
where
\begin{align}\label{2-1}
T_j (f)(x):
&=\lf[\sum_{k\in\mathbb{Z}} \int_{2^{-k}}^{2^{-k+1}} \dashint_{B(0,2^{-k+1})} |f\ast \varphi_{2^{-j-k}}\ast K_t(x+y)|^2 \, dy \f {dt}{t}\r]^{\f12}.
\end{align}

By Lemma \ref{lem0}, we have the following lemma, whose proof is similar to that of Lemma \ref{lem0'},
the details being omitted.

\begin{lemma}\label{lem-2-4} For $\al\in(0,2)$,
there exists a positive constant $C$ such that, for all $j\in\mathbb{Z}$ and $f\in L^2(\rn)$,
$$\|T_j(f)\|_{L^2(\rn)}\leq C \min\lf\{2^{-\al j},\,2^{(2-\al)j}\r\} \|f\|_{L^2(\rn)}.$$
\end{lemma}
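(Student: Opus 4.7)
The plan is to mimic the proof of Lemma \ref{lem-2-4'} with $\mathcal{K}$ replaced by $K$, using the bound on $|\widehat{K}|$ supplied by Lemma \ref{lem0} in place of Lemma \ref{lem0'}(iv). Starting from the definition \eqref{2-1} of $T_j(f)$, I would first apply Fubini to integrate in $x\in\rn$ before integrating over $y\in B(0,2^{-k+1})$; since $K_t(\cdot+y)$ is just a translate of $K_t$, translation-invariance of the $L^2(\rn)$-norm causes the $y$-average to drop out, giving
\[
\|T_j(f)\|_{L^2(\rn)}^2 = \sum_{k\in\mathbb{Z}} \int_{2^{-k}}^{2^{-k+1}} \int_{\rn} |f\ast \varphi_{2^{-j-k}}\ast K_t(x)|^2 \, dx \, \f{dt}{t}.
\]

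Next, I would apply Plancherel's theorem and the support condition $\supp\widehat{\varphi}\subset\{\f12\leq|\xi|\leq2\}$, which forces $|\xi|\sim 2^{j+k}$, and obtain
\[
\|T_j(f)\|_{L^2(\rn)}^2 \lesssim \sum_{k\in\mathbb{Z}} \int_{|\xi|\sim 2^{j+k}} |\widehat{f}(\xi)|^2 \int_{2^{-k}}^{2^{-k+1}} |\widehat{K}(t\xi)|^2 \,\f{dt}{t}\, d\xi.
\]
On the range of integration one has $t|\xi|\sim 2^j$, so Lemma \ref{lem0} gives $|\widehat{K}(t\xi)|\lesssim \min\{(t|\xi|)^{2-\al},(t|\xi|)^{-\al}\}\lesssim \min\{2^{(2-\al)j},2^{-\al j}\}$. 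Pulling this uniform bound out of the inner integrals and summing over $k$ (using the finite overlap of the annuli $\{|\xi|\sim 2^{j+k}\}$ as $k$ varies) controls the remainder by $\|\widehat{f}\|_{L^2(\rn)}^2=\|f\|_{L^2(\rn)}^2$, yielding
\[
\|T_j(f)\|_{L^2(\rn)}^2 \lesssim \min\{2^{2(2-\al)j},2^{-2\al j}\}\|f\|_{L^2(\rn)}^2,
\]
which after taking square roots is exactly the desired inequality.

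There is no real obstacle here; the key and essentially only point is the observation that the rescaling $t\sim 2^{-k}$ against $|\xi|\sim 2^{j+k}$ reduces the problem to evaluating $\widehat{K}$ at a point of modulus $\sim 2^j$, so the two-sided estimate of Lemma \ref{lem0} translates directly into the symmetric two-sided decay in $j$ claimed in the statement. This is also why the proof can legitimately be omitted as ``similar to that of Lemma \ref{lem-2-4'}.''
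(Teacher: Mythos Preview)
Your proposal is correct and follows essentially the same approach as the paper: the paper omits the proof of Lemma \ref{lem-2-4} entirely, pointing out that it is identical to that of Lemma \ref{lem-2-4'} once $\mathcal{K}$ is replaced by $K$ and Lemma \ref{lem0'}(iv) by Lemma \ref{lem0}, which is precisely the argument you have written out. Your identification of the key point---that $t\sim 2^{-k}$ against $|\xi|\sim 2^{j+k}$ forces $t|\xi|\sim 2^j$, so Lemma \ref{lem0} yields the two-sided decay in $j$---is exactly the mechanism behind both proofs.
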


\begin{lemma}\label{lem-2-5} Let $\al\in(0,2)$, $\varepsilon\in (0,1)$ and $p\in(1,\fz)$.
Then there exists a positive constant $C_{(\varepsilon,\,\al,\,p)}$,
depending on $\varepsilon,\,\al$ and $p$,
such that, for all $j\in(-\fz,0]\cap\mathbb{Z}$ and $f\in L^p(\rn)$,
$$\|T_j(f)\|_{L^p(\rn)}\leq C_{(\varepsilon,\,\al,\,p)} 2^{(4-2\al)j(\f1q-\varepsilon)}\|f\|_{L^p(\rn)},$$
where $q:=\max\{p,p'\}$.
\end{lemma}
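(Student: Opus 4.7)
The plan is to follow the template of the proof of Lemma~\ref{lem-2-5'}. By Lemma~\ref{lem-2-4}, for $j\leq 0$ one has $\|T_j(f)\|_{L^2(\rn)}\ls 2^{(2-\al)j}\|f\|_{L^2(\rn)}$, so via the Marcinkiewicz interpolation theorem (\cite[Theorem~1.3.2]{lg08}) applied against a suitable $L^{p_0}$-endpoint bound, the claim reduces to the uniform $L^p$ estimate
\begin{equation*}
\|T_j(f)\|_{L^p(\rn)}\ls\|f\|_{L^p(\rn)},\quad p\in(1,\fz),
\end{equation*}
with the implicit constant independent of $j\in(-\fz,0]\cap\mathbb{Z}$. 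This parallels exactly the strategy used to prove Lemma~\ref{lem-2-5'}, where an analogous uniform $L^p$-bound was first established and then interpolated against Lemma~\ref{lem-2-4'}.

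For the uniform estimate I would mimic the pointwise argument of \eqref{te2}--\eqref{2-2}. Using \eqref{1-4} and \eqref{2-1}, the operator kernel to analyze is $\Phi_{j,k,t}:=K_t\ast\vi_{2^{-j-k}}$, whose Fourier transform is $\widehat{\vi}(2^{-j-k}\xi)[\widehat{\chi}(t\xi)-1]/(t^\al|\xi|^\al)$. After the substitution $\eta:=2^{-j-k}\xi$ and $s:=2^k t\in[1,2]$, this becomes $s^{-\al}2^{-\al j}M_{j,s}(\eta)$ with
\[M_{j,s}(\eta):=\widehat{\vi}(\eta)\,\frac{\widehat{\chi}(2^j s\eta)-1}{|\eta|^\al}.\]
Since $\widehat{\vi}$ is compactly supported in $\{|\eta|\sim 1\}$ (bounded away from the origin) and $j\leq 0$, the Taylor expansion $\widehat{\chi}(z)-1=-c_1|z|^2+O(|z|^4)$ at the origin yields $|\partial^\beta M_{j,s}(\eta)|\ls (2^j)^2$ uniformly for $j\leq 0$, $s\in[1,2]$ and $\eta\in\mathrm{supp}\,\widehat{\vi}$. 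Fourier inversion then gives $|M_{j,s}^\vee(z)|\ls (2^j)^2(1+|z|)^{-\ell}$ for any $\ell\in\mathbb{Z}_+$, and consequently
\[|\Phi_{j,k,t}(x)|\ls 2^{(2-\al)j}\,2^{n(j+k)}(1+2^{j+k}|x|)^{-\ell}\ls 2^{n(j+k)}(1+2^{j+k}|x|)^{-\ell}.\]
The shift argument as in \eqref{2-2}, valid because $j\leq 0$ forces $2^{j+k}|y|\leq 2^{j+1}\leq 2$ for all $|y|\leq 2^{-k+1}$, then yields, for all such $y$ and all $t\in[2^{-k},2^{-k+1}]$,
\[|f\ast\vi_{2^{-j-k}}\ast K_t(x+y)|\ls M(f_{j+k})(x),\]
where $f_{j+k}:=f\ast\psi_{2^{-j-k}}$ with $\psi$ as introduced in the proof of Lemma~\ref{lem-2-5'}. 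Substituting into \eqref{2-1} and invoking the Fefferman--Stein vector-valued maximal inequality~\cite{fs} together with the Littlewood--Paley characterization of $L^p(\rn)$ (\cite[Theorem~6.2.7]{lg09}) then delivers the uniform $L^p$ estimate.

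Finally, to produce the exponent $2^{(4-2\al)j(1/q-\va)}$, I apply Marcinkiewicz interpolation with $1/p=(1-\theta)/p_0+\theta/2$, selecting $p_0$ close to $1$ when $p<2$ and close to $\fz$ when $p>2$. The target interpolated rate $\theta(2-\al)$ corresponds to $\theta=2(1/q-\va)$ with $q=\max\{p,p'\}$, and a short computation shows that for fixed $\va\in(0,1)$ the required $p_0\in(1,\fz)$ is admissible; interpolating the uniform $L^{p_0}$-bound against the decaying $L^2$-bound $2^{(2-\al)j}$ produces exactly $2^{(4-2\al)j(1/q-\va)}$. The main technical obstacle is the derivative estimate on $M_{j,s}$: for $\al=2$ this was immediate from Lemma~\ref{lem0'}(ii), since $(\widehat{\chi}(\eta)-1)/|\eta|^2$ is smooth across the origin; for $\al\in(0,2)$ the function $(\widehat{\chi}(\eta)-1)/|\eta|^\al$ behaves like $c|\eta|^{2-\al}$ near the origin and is not smooth there, but the cutoff $\widehat{\vi}$ localizes $\eta$ away from zero, so the required derivatives on $\mathrm{supp}\,M_{j,s}$ are controlled by a direct Taylor expansion of $\widehat{\chi}(2^j s\eta)$ together with the observation that derivatives of $\widehat{\chi}$ of order $\geq 1$ vanish at the origin.
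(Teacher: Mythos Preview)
Your proposal is correct and follows essentially the same route as the paper's proof: reduce to a uniform $L^p$-bound via Lemma~\ref{lem-2-4} and Marcinkiewicz interpolation, then establish that bound by the pointwise multiplier/shift argument of \eqref{2-2} followed by Fefferman--Stein and Littlewood--Paley. The paper simply writes ``Similar to \eqref{2-2}'' for the key pointwise step and omits the interpolation bookkeeping, whereas you spell both out; your change of variables (rescaling to $|\eta|\sim 1$ rather than the paper's $|\xi|\sim 2^{j}$) is a cosmetic variant that makes the smoothness of the multiplier slightly more transparent in the fractional case, since the factor $|\eta|^{-\al}$ then lives on the support of $\widehat{\vi}$ away from the origin.
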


\begin{proof} Let $p\in(1,\fz)$.
By Lemma \ref{lem-2-4} and the Marcinkiewicz interpolation theorem
(see, for example, \cite[Theorem 1.3.2]{lg08}), it suffices to show that,
for all $j\in(-\fz,0]\cap\mathbb{Z}$ and $f\in L^p(\rn)$,
\begin{equation}\label{se0'}
\|T_j(f)\|_{L^p(\rn)}\ls \|f\|_{L^p(\rn)}.\end{equation}

By \eqref{2-1} and \eqref{1-4}, we see that, for all $x\in\rn$,
\begin{align}\label{te2'}
    |T_j (f)(x)|^2\ls \sum_{k\in\mathbb{Z}}
    \sup_{\substack {y\in B(0,2^{-k+1})\\
    t\in [2^{-k}, 2^{-k+1}]}} \lf|t^{-\al}(B_t-I)(f\ast I_\al\ast \varphi_{2^{-j-k}})(x+y)\r|^2.
\end{align}
Similar to \eqref{2-2}, we conclude that, for all $t\in [2^{-k}, 2^{-k+1}]$,
\begin{align*}
  & t^{-\al}\sup_{|y|\leq 2^{-k+1}} |(I-B_t)(f\ast I_\al\ast \varphi_{2^{-j-k}})  (x+y)|\ls M(f_{j+k})(x).
\end{align*}
From this, \eqref{te2'}, the Fefferman-Stein vector-valued inequality (see \cite{fs}) and  the Littlewood-Paley characterization of $L^p(\rn)$ (see, for example, \cite[Theorem 6.2.7]{lg09}),
we deduce that \eqref{se0'} holds true. This finishes the proof of Lemma \ref{lem-2-5}.
\end{proof}

\begin{lemma}\label{lem-3-4}
Let $\al\in(0,2)$, $\va\in (0,1)$ and $p\in(1,2]$.
Then there exists a positive constant $C_{(\va,\,\al,\,p)}$, depending on $\va,\,\al$ and $p$,
such that, for all $j\in[0,\fz)\cap\mathbb{Z}$ and $f\in L^p(\rn)$,
\begin{equation}\label{3-10-0}
\|T_{j}(f)\|_{L^p(\rn)}\leq C_{(\va,\,\al,\,p)}2^{[ n(\f 1p-\f12)-\al+\va]j}\|f\|_{L^p(\rn)}.\end{equation}
\end{lemma}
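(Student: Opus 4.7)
The plan is to adapt the proof of Lemma~\ref{lem-3-4'} to the fractional setting, replacing the exponent $2$ by $\al$ throughout. The key simplification compared to the $\al=2$ case is that no analog of the splitting in Lemma~\ref{lem-3-3} is required: since $K=\chi\ast I_\al-I_\al$ has no additional $\chi$-type term (unlike $\mathcal{K}$), the operator $T_j$ is itself directly analogous to $\mathcal{T}_{j,1}$.

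First, I would reduce $T_j$ to an average-type expression. Using \eqref{1-4} to rewrite $f\ast\vi_{2^{-j-k}}\ast K_t(x+y)=t^{-\al}(B_t-I)(f\ast I_\al\ast\vi_{2^{-j-k}})(x+y)$, bounding $|(B_t-I)(g)|^2\ls|B_tg|^2+|g|^2$ with $|B_tg(x+y)|^2\leq\dashint_{B(x+y,t)}|g|^2$, and noting that both $B(x+y,t)$ for $y\in B(0,2^{-k+1})$, $t\in[2^{-k},2^{-k+1}]$ and $B(x,2^{-k+1})$ lie inside $B(x,2^{-k+4})$, one obtains
\begin{equation*}
T_j(f)(x)\ls \wt{T}_j(f)(x):=\lf\{\sum_{k\in\mathbb{Z}}2^{2\al k}\dashint_{B(x,2^{-k+4})}\lf|f\ast I_\al\ast\vi_{2^{-j-k}}(z)\r|^2\,dz\r\}^{\f12}.
\end{equation*}
Then by Plancherel and the support of $\widehat{\vi}$, imitating \eqref{se2'},
\begin{equation*}
\|\wt{T}_j(f)\|_{L^2(\rn)}^2\ls\sum_{k\in\mathbb{Z}}2^{2\al k}\int_{|\xi|\sim 2^{j+k}}|\widehat{f}(\xi)|^2|\xi|^{-2\al}\,d\xi\ls 2^{-2\al j}\|f\|_{L^2(\rn)}^2,
\end{equation*}
which already covers the case $p=2$ (since $-\al\leq -\al+\va$).

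For $p\in(1,2)$, I would establish the weak-type $(1,1)$ bound $|\{x\in\rn:\wt{T}_j(f)(x)>\lz\}|\ls 2^{j(n/2+\bz-\al)}\|f\|_{L^1(\rn)}/\lz$ for every $\bz\in(0,1)$ via the vector-valued Calder\'on--Zygmund theorem \cite[Theorem 3.4]{G-CF}. Set
\begin{equation*}
\Sigma:=\lf(\rn\times\mathbb{Z},\,\chi_{B(0,2^{-k+4})}(y)\,dy\,d\gamma(k)\r),\quad A_j(f)(x;y,k):=2^{(n/2+\al)k}f\ast\vi_{2^{-j-k}}\ast I_\al(x+y),
\end{equation*}
so that $\wt{T}_j(f)(x)\sim\|A_j(f)(x)\|_{L^2(\Sigma)}$, and $A_j f(x;y,k)=\int_\rn f(z)H_{j+k}(x+y-z)\,dz$ with $H_{j+k}(z)=2^{-\al j}2^{nk/2}2^{(j+k)n}m^\vee(2^{j+k}z)$ where $m(\xi):=\widehat{\vi}(\xi)/|\xi|^\al\in C_c^\infty(\rn)$ is radial. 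Combined with the $L^2$ bound above, it then suffices to verify the H\"ormander-type estimate
\begin{equation*}
\lf\|H_{j+k}(x+\cdot-z)-H_{j+k}(x+\cdot)\r\|_{L^2(\Sigma)}\ls 2^{j(n/2+\bz-\al)}\f{|z|^\bz}{|x|^{n+\bz}},\quad |x|>2|z|.
\end{equation*}
Writing $H_{j+k}(z)=\wt{H}_{j+k}(|z|)$ via the radial structure, and using the Schwartz bounds $|\wt{H}_{j+k}(u)|\ls 2^{-\al j}2^{nk/2}2^{(j+k)n}(1+2^{j+k}|u|)^{-\ell}$ and $|\f{d}{du}\wt{H}_{j+k}(u)|\ls 2^{-\al j}2^{nk/2}2^{(j+k)(n+1)}(1+2^{j+k}|u|)^{-\ell}$, one splits the sum over $k$ at $2^{k-8}\sim|x|^{-1}$ into pieces $J_1$ (large $k$) and $J_2$ (small $k$) and runs the estimates of Lemma~\ref{lem-3-4'} verbatim with $2$ replaced by $\al$.

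Finally, Marcinkiewicz interpolation between the $L^2\to L^2$ bound with constant $2^{-\al j}$ and the $L^1\to L^{1,\infty}$ bound with constant $2^{j(n/2+\bz-\al)}$ gives, for $p\in(1,2)$ and $\theta=2(1-1/p)$,
\begin{equation*}
\|\wt{T}_j(f)\|_{L^p(\rn)}\ls 2^{j[-\al\theta+(1-\theta)(n/2+\bz-\al)]}\|f\|_{L^p(\rn)}=2^{j[n(1/p-1/2)-\al+\bz(2/p-1)]}\|f\|_{L^p(\rn)}.
\end{equation*}
Choosing $\bz\in(0,\min\{1,\va\})$ so that $\bz(2/p-1)<\va$ yields the claim. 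The main obstacle is the H\"ormander estimate: the factor $2^{-\al j}$ inherent in $H_{j+k}$, the mean-value gain $2^{(j+k)\bz}|z|^\bz$, and the summation over $k$ must combine precisely to produce the exponent $n/2+\bz-\al$; the dominant and most delicate contribution is the piece $J_2$ (small $k$), where the Schwartz decay in $|x+y|$ is unavailable and one must instead exploit the smallness of the volume of the integration domain.
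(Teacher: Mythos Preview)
Your proposal is correct and follows essentially the same route as the paper's own proof: reduce $T_j$ to the average-type operator $T_{j,1}$ via \eqref{1-4}, obtain the weak-$(1,1)$ bound $2^{j(n/2+\bz-\al)}$ by the vector-valued Calder\'on--Zygmund argument of Lemma~\ref{lem-3-4'} with $I_2$ replaced by $I_\al$, and interpolate against the $L^2$ bound $2^{-\al j}$ (from Lemma~\ref{lem-2-4}, or equivalently your direct Plancherel computation on $\wt{T}_j$). Your observation that no splitting as in Lemma~\ref{lem-3-3} is needed here is exactly the point, and your explicit Marcinkiewicz calculation with $\t=2(1-1/p)$ and the choice of $\bz$ to absorb $\bz(2/p-1)<\va$ matches what the paper leaves implicit.
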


\begin{proof}
For all $t\in(0,\fz)$ and $x\in\rn$,
\begin{align*}
   &\dashint_{B(0, 2t)}| f\ast K_t(x+y)|^2\, dy\\
   & =\dashint_{B(0, 2t)}\lf|\f{(B_t-I) (f\ast I_\al)(x+y)}{t^\al}\r|^2 \, dy
   \ls t^{-2\al}\dashint_{B(x, 3t)}|f\ast I_\al(z)|^2\, dz.
\end{align*}
By this and \eqref{2-1}, we see that, for all $x\in\rn$
\begin{eqnarray}\label{se1}
T_j(f)(x)\ls\lf[\sum_{k\in\mathbb{Z}} 2^{2\al k} \dashint_{ B(x, 2^{-k+4})}\lf|f\ast I_\al\ast \vi_{2^{-j-k}}(z)\r|^2\, dz\r]^{\f12}=:T_{j,1}(f)(x).
\end{eqnarray}

On the other hand, similar to \eqref{3-12'}, for all $\bz\in(0,1)$, we conclude that
\begin{equation}\label{3-12}
    \lf|\{ x\in\rn:\ T_{j,1}(f)(x)>\lz\}\r| \ls  2^{j(\f n2+\bz-\al)}\f{\|f\|_{L^1(\rn)}}{\lz},\   \   \   \lz\in(0,\fz).
\end{equation}
 Thus, by \eqref{3-12} and \eqref{se1}, it is easy to see that
\begin{equation*}
    \lf|\{ x\in\rn:\ T_{j}(f)(x)>\lz\}\r| \ls  2^{j(\f n2+\bz-\al)}\f{\|f\|_{L^1(\rn)}}{\lz},\   \   \   \lz\in(0,\fz),
\end{equation*}
which combined with Lemma \ref{lem-2-4} with $j\in[0,\fz)\cap\mathbb{Z}$ and the Marcinkiewicz interpolation theorem (see, for example, \cite[Theorem 1.3.2]{lg08}), further implies \eqref{3-10-0}.
This finishes the proof of Lemma \ref{lem-3-4}.
 \end{proof}

Now we prove Lemma \ref{Lem}.
\begin{proof}[Proof of Lemma \ref{Lem}]
We first show \eqref{2-1-1}.
Notice that, when $p\in[2,\fz)$, \eqref{2-1-1} follows from Lemma \ref{Lem p>2}(i), since
 $$\mathcal{S}_\al(f)(x)\sim\widetilde{S}( \widetilde{f})(x)=\lf\{ \int_0^\infty \dashint_{B(0,t)} |\widetilde{f}\ast K_t(x+y)|^2\, dy \f {dt}{t}\r\}^{\f12},$$
 where $\widetilde{f}:=(-\lap)^{\al/2}f$. Thus, it suffices to show that for $p\in(\max\{1,\f{2n}{2\al+n}\},2)$, \eqref{2-1-1} remains true.

For $p\in(1,2)$, let $\va,\,\t,\,q_\va$ be as those in the proof of Lemma \ref{Lem'}.
If $j\in(0,\fz)\cap\mathbb{Z}$, then, by Lemmas \ref{lem-2-4}, \ref{lem-3-4} and the Marcinkiewicz interpolation theorem (see, for example, \cite[Theorem 1.3.2]{lg08}), we conclude that
\begin{align*}\|T_j(f)\|_{L^p(\rn)} &\ls2^{|j|  \t [ n(\f 1{q_\va}-\f12)-\al+\f \va \t]} 2^{-\al(1-\t)|j|}\|f\|_{L^p(\rn)}\\
    &\sim 2^{-|j|[\al-n(\f 1p-\f12)-\va]}\|f\|_{L^p(\rn)}.
\end{align*}
If $j\in(-\fz,0]\cap\mathbb{Z}$, then, by Lemmas \ref{lem-2-4} and \ref{lem-2-5}, and the Marcinkiewicz interpolation theorem (see, \cite[Theorem 1.3.2]{lg08}, for example), we also conclude that
\begin{align*}
\|T_j(f)\|_{L^p(\rn)} &\ls2^{|j|\t [-2(2-\al)(\f1{q_\va}-\va)]} 2^{-(2-\al)(1-\t)|j|}\|f\|_{L^p(\rn)}\\
    &\sim 2^{-|j|\{(2-\al)[(\f 2{q_\va}-1-2\va)\t+1]\}}\|f\|_{L^p(\rn)}.
\end{align*}
Notice that $\f {2n}{2\al+n}<p<2$ implies $n(\f 1p-\f12)<\al$, so
 we may choose $\va\in (0,1)$ small enough in the above estimate so that
 $$\d:=\min\lf\{ \al-n\lf(\f 1p-\f12\r)-\va, (2-\al)\lf[\lf(\f 2{q_\va}-1-2\va\r)\t+1\r]\r\}>0.$$
We then see that
 $$\|T_j(f)\|_{L^p(\rn)}\ls 2^{-|j|\d}\|f\|_{L^p(\rn)},\   \  j\in\mathbb{Z}.$$
From this and \eqref{decom'}, we deduce \eqref{2-1-1}.

Next, we show that, if \eqref{2-1-1} holds true for some $p\in(1,\fz)$ and all $f\in L^p(\rn)$, then one must have $p\ge \f {2n}{2\al+n}$. We only need to show the case when $n>2\al$, since we always assume $p\in(1,\fz)$.
 Let $\phi\in\cs(\rn)$ and $j\in\mathbb{N}$ be the same as in the proof of Lemma \ref{Lem'}. Then
\begin{align}\label{4-1}
    \widetilde{S}(\phi_j)(x) &\gtrsim \lf\{\int_1^2 \int_{B(0,1)} \lf| \f {B_t (\phi_j\ast I_\al)(x+y) -(\phi_j\ast I_\al)(x+y)}{t^\al} \r|^2\, dy dt\r\}^{\f12}\\
    &\gtrsim J_{j,1}(x) -J_{j,2}(x),\notag
\end{align}
where
\begin{align*}
J_{j,1}(x)&:=2^{-\al}\lf[\int_{B(0,1)} |(\phi_j\ast I_\al)(x+y)|^2 \, dy \r]^{\f12},\\
    J_{j,2}(x) &:=\lf[\int_1^2 \int_{B(0,1)} |B_t (\phi_j\ast I_\al)(x+y)|^2 \, dy dt\r]^{\f12}.
\end{align*}

Similar to \eqref{4-6'} and \eqref{4-4'}, we have
\begin{equation}\label{4-6}
   \|J_{j,1}\|_{L^p(\rn)} \gtrsim 2^{-\al j+\f {jn}2}\ \ \ {\rm and}\ \ \
   \|J_{j,2}\|_{L^p(\rn)} \ls 2^{-\al j} 2^{jn(1-\f1p)},
\end{equation}
respectively. Therefore, by \eqref{4-1}, \eqref{4-6} and the assumption \eqref{2-1-1}, we conclude that, for all sufficiently large $j\in\mathbb{N}$,
\begin{align*}
    2^{j(-\al+\f n2)} &\ls \|J_{j,1}\|_{L^p(\rn)} \ls\|\widetilde{S}(\phi_j)\|_{L^p(\rn)} +
    \|J_{j,2} \|_{L^p(\rn)}\\
    &\ls\|\phi_j\|_{L^p(\rn)} +2^{-\al j}2^{jn(1-\f 1p)}\ls 2^{jn(1-\f 1p)}.
\end{align*}
This implies that
$ -\al+\f n2 \leq n(1-\f 1p)$, and hence
$ p\ge \f {2n}{n+2\al}$,
which completes the proof of Lemma \ref{Lem}.
\end{proof}
Now we are ready to show Theorem \ref{Thm1}.

\begin{proof}[Proof of Theorem \ref{Thm1}]
The equivalence between (ii) and (iii) of Theorem \ref{Thm1}(I) is from Lemma \ref{e34x}. This equivalence holds true for all $p\in(1,\fz)$ and $n\in\nn$.

To complete the proof, it suffices to prove the equivalence between (i) and (ii) of Theorem \ref{Thm1}(I).
Clearly, (i) $\Longrightarrow$ (ii) follows from Lemma
\ref{Lem} when $p\in(\max\{1, \f{2n}{2\al+n}\},\fz)$,
since, for all $x\in\rn$,
 \begin{eqnarray}\label{te9}\mathcal{S}_\al(f)(x)\sim\widetilde{S}( \widetilde{f})(x)\sim\lf\{ \int_0^\infty \dashint_{B(0,t)} |\widetilde{f}\ast K_t(x+y)|^2\, dy \f {dt}{t}\r\}^{\f12},
 \end{eqnarray}
 where $\widetilde{f}:=(-\lap)^{\al/2}f$. Moreover, for all $f\in W^{\az,p}(\rn)$,
\begin{equation}\label{(p,p)1}
  \|\CS_\al(f)\|_{L^p(\rn)}\lesssim \|(-\lap)^{\al/2} f\|_{L^p(\rn)}.
\end{equation}

Now we show (ii) $\Longrightarrow$ (i).
Assume $f\in L^p(\rn)$ such that $\CS_\al(f)\in L^p(\rn)$.
Take a non-negative radial $C^\fz$ function $\phi$ which is supported in $B(0,1)$
such that $\|\phi\|_{L^1(\rn)}=1$ and, for all $\ez\in (0,\fz)$ and $x\in\rn$,
let $\phi_\ez(x):=\ez^{-n}\phi\lf(x/\ez\r)$.
Clearly, for $\ez\in (0,\fz)$, $f_\ez:=f\ast\phi_\ez\in W^{\al,p}(\rn)$. Thus,
by \cite[Lemma 2(i)]{AMV} and the above proved conclusion (i) $\Longrightarrow$ (ii),
we see that $\CS_\al\lf(f_\ez\r)\in L^p(\rn)$. Similar to \eqref{se6}, for all $x\in\rn$, we have
\begin{eqnarray*}
\cs_\al\lf(f_\ez\r)(x)\leq\CS_\al(f)\ast\phi_\ez(x),
\end{eqnarray*}
and hence $\{\mathcal{S}_\al(f_\epsilon)\}_{\epsilon\in(0,\fz)}$ is a bounded set of $L^p(\rn)$. Thus, there exist a function $h\in L^p(\rn)$ and a sequence $\epsilon_j\rightarrow0^+$ as $j\rightarrow\fz$ such that
$$(-\lap)^{\al/2}f_{\epsilon_j}\rightarrow h\ \ \ {\rm as}\ \ \ j\rightarrow\fz$$
in the weak-$\ast$ topology of $L^p(\rn)$. On the other hand, by \cite[Lemma 2(ii)]{AMV}, we know that $(-\lap)^{\al/2}f$ and  $(-\lap)^{\al/2}f_\epsilon$ are both tempered distributions and hence
$$(-\lap)^{\al/2}f_\epsilon\rightarrow(-\lap)^{\al/2}f \ \ \ {\rm as}\ \ \ \epsilon\rightarrow0$$
in the weak topology of distribution. Therefore $(-\lap)^{\al/2}f=h \in L^p(\rn)$,
which implies $f\in W^{\al,p}(\rn)$. This proves (ii) $\Longrightarrow$ (i).

Finally we prove the inverse inequality of \eqref{(p,p)1} by borrowing some ideas from \cite{dgyy2}.
Notice that
\begin{eqnarray}\label{Triebel2}
\|(-\lap)^{\al/2}f\|_{L^p(\rn)}&\sim& \|(-\lap)^{\al/2}f\|_{\dot F^0_{p,2}(\rn)} \sim \|f\|_{\dot F^\al_{p,2}(\rn)}\\
&\sim& \lf\|\lf\{\int_0^\fz \lf[\dashint_{B(\cdot,t)} \lf|\vi_t\ast f(y)\r|\,dy\r]^2
\,\frac{dt}{t^{2\al+1}}\r\}^{\frac12}\r\|_{L^p(\rn)},\noz
\end{eqnarray}
where $\dot F^0_{p,2}(\rn)$ and $\dot F^\al_{p,2}(\rn)$ denote
Triebel-Lizorkin spaces,  the second equivalence in
\eqref{Triebel2} is due to the well-known lifting property of Triebel-Lizorkin spaces,
and the third one follows from the Lusin area function characterization of Triebel-Lizorkin spaces (see, for example, \cite[Theorem 2.8]{u12} and its proof).
Here we take $\vi\in \cs(\rn)$ such that
$$\supp\widehat{\vi} \subset \lf\{\xi\in\rn:\ \ 2^{k_0-1}\le |\xi|\le 2^{k_0+1}\r\}$$
and $|\widehat{\vi}(\xi)|\ge {\rm constant}>0$ when $\frac352^{k_0}\le|\xi|\le \frac532^{k_0}$
for some $k_0\in\mathbb{Z}$ which will be determined later (It is well known that different $k_0$ gives equivalent quasi-norms of $\|f\|_{\dot F^\al_{p,2}(\rn)}$).
On the other hand, for all $\xi\in\rn$,
\begin{eqnarray*}
\lf(B_t f-f\r)^\wedge(\xi)=\lf[m(t\xi)-1\r] \widehat{f}(\xi)=:A(t|\xi|) \widehat{f}(\xi),
\end{eqnarray*}
where
$$m(\xi):= 1-2\gamma_n \int_0^1 \lf(1-u^2\r)^{\f
  {n-1}2} \lf(\sin \frac{u|\xi|}2\r)^{2} \, du,\quad \xi\in\rn,$$
$\gamma_n:=[\int_0^1(1-u^2) ^{\f {n-1}2} \, du]^{-1}$ and
$$A(s):=-2\gamma_n
\int_0^1 \lf(1-u^2\r)^{\f
  {n-1}2} \lf(\sin \frac{us}2\r)^{2} \, du, \quad s\in(0,\fz);$$
see \cite[Lemma 2.1]{dgyy2}.
Thus,
\begin{eqnarray*}
\vi_t\ast f=\lf[\widehat{\vi}(t\cdot) \widehat{f}(\cdot)\r]^\vee=
\lf[\frac{\widehat{\vi}(t\cdot)}{A(t|\cdot|)}A(t|\cdot|) \widehat{f}(\cdot)\r]^\vee=:\lf[\eta(t\cdot) A(t|\cdot|) \widehat{f}(\cdot)\r]^\vee,
\end{eqnarray*}
here $\eta(\xi):=\frac{\widehat{\vi}(\xi)}{A(|\xi|)}$ for all $\xi\in\rn$.
Since
$$\int_0^1 \lf(1-u^2\r)^{\f
  {n-1}2} \lf(\sin \frac{us}2\r)^{2} \, du\to 0,\quad s\to  0^+,$$
it follows that, when $s$ is small enough, then $|A(s)|>\frac 12$.
Thus, we can take $k_0$ small enough such that $\eta\in C_c^\fz(\rn)$, and hence $\eta^\vee$ is a Schwartz function. Then, for any $N\in\nn$ and $x\in\rn$, $|\eta^\vee(x)|\lesssim (1+|x|)^{-N}$, and  we find that, for all
$t\in(0,\fz)$ and $x\in\rn$,
\begin{eqnarray*}
  &&\dashint_{B(x,t)}\lf|\vi_t\ast f(y)\r|\,dy\\
 &&\quad=\dashint_{B(x,t)}\lf|\eta^\vee_t*(B_tf-f)(y)\r|\,dy\le \int_\rn\lf|\eta^\vee_t(\nu)\r|
   \dashint_{B(\nu-x,t)}\lf|(B_tf-f)(y)\r|\,dy\,d\nu\\
   &&\quad\ls \int_\rn\frac{t^{-n}}{\lf(1+|\nu+x|/t\r)^N}
   \dashint_{B(\nu,t)}\lf|(B_tf-f)(y)\r|\,dy\,d\nu\\
   &&\quad\sim\int_{|\nu+x|\le t}\frac{t^{-n}}{\lf(1+|\nu+x|/t\r)^N}
   \dashint_{B(\nu,t)}\lf|(B_tf-f)(y)\r|\,dy\,d\nu \\
   &&\quad\quad +\sum_{k=1}^\fz\int_{2^{k-1}t<|\nu+x|\le 2^kt}\frac{t^{-n}}{\lf(1+|\nu+x|/t\r)^N}
   \dashint_{B(\nu,t)}\lf|(B_tf-f)(y)\r|\,dy\,d\nu\\
   && \quad\ls\dashint_{|\nu+x|\le t}\dashint_{B(\nu,t)}\lf|(B_tf-f)(y)\r|\,dy\,d\nu \\
   &&\quad\quad +  \sum_{k=1}^\fz 2^{nk} 2^{-N(k-1)}\dashint_{|\nu+x|\le2^kt}
   \dashint_{B(\nu,t)}\lf|(B_tf-f)(y)\r|\,dy\,d\nu\\
   &&\quad\ls M\lf(\dashint_{B(\cdot,t)}\lf|(B_tf-f)(y)\r|\,dy\r)(-x)
   \lf[1+\sum_{k=1}^\fz2^{-(N-n)k}\r] \\
   &&\quad\sim M\lf(\dashint_{B(\cdot,t)}\lf|(B_tf-f)(y)\r|\,dy\r)(-x),
\end{eqnarray*}
where $M$ denotes the Hardy-Littlewood maximal operator and we took $N>n$.
Therefore, by \eqref{Triebel2}, the Fefferman-Stein vector-valued inequality (see \cite{fs})
and the H\"{o}lder inequality, we have
\begin{eqnarray*}
  \|(-\lap)^{\al/2} f\|_{L^p(\rn)} &\lesssim&\lf\|\lf\{\int_0^\fz \lf[ M\lf(\dashint_{B(\cdot,t)}\lf|(B_tf-f)(y)\r|\,dy\r)\r]^2
  \,\frac{dt}{t^{2\al+1}}\r\}^{\frac12}\r\|_{L^p(\rn)}  \\
   &\lesssim& \lf\|\lf\{\int_0^\fz \lf[\dashint_{B(\cdot,t)}\lf|(B_tf-f)(y)\r|\,dy\r]^2
  \,\frac{dt}{t^{2\al+1}}\r\}^{\frac12}\r\|_{L^p(\rn)} \\
   &\lesssim &  \lf\|\lf\{\int_0^\fz \int_{B(\cdot,t)}\lf|(B_tf-f)(y)\r|^2\,dy
  \frac{dt}{t^{n+2\al+1}}\r\}^{\frac12}\r\|_{L^p(\rn)}\sim\|\CS_\al(f)\|_{L^p(\rn)}.
\end{eqnarray*}
This finishes the proof of Theorem \ref{Thm1}(I).

Now we prove  Theorem \ref{Thm1}(II). By \eqref{te9} and Lemma \ref{Lem}, we know that the condition $p\in(\f{2n}{2\al+n},\fz)$ is nearly sharp, in the sense that, if Theorem \ref{Thm1}(i) is equivalent to either (ii) or (iii) of
Theorem \ref{Thm1}, then, we have
$$p\in \lf[\frac{2n}{2\al+n},\fz\r)\ {\rm for}\ n>2\al\ \quad
{\rm and}\quad \ p\in (1,\fz)\ {\rm for}\ n\leq2\al,$$
which implies Theorem \ref{Thm1}(II) and hence completes the proof of Theorem \ref{Thm1}.
\end{proof}

\begin{remark}\label{re1.2}
(i) When $n>2\az$, it is still unknown whether the equivalence between
(i) and either (ii) or (iii) of Theorem \ref{Thm1}(I) remains true in the endpoint case $p=\f{2n}{2\al+n}$ or not.

(ii) It is known that, when $p\in(0,\fz)$, $q\in(0,\fz]$ and $\az\in( n\max\{0,1/p-1/q\},1)$,
then $f\in F^\az_{p,q}(\rn)$ if and only if
$$\|f\|_{L^p(\rn)}+\lf\|\lf[\int_0^1 t^{-\az q}
\dashint_{B(\cdot,\,t)}\lf|f(\cdot)-
f(y)\r|^q\,dy\,\frac{dt}t\r]^{1/q}\r\|_{L^p(\rn)}<\fz,$$
which also serves as an equivalent quasi-norm of $F^\az_{p,q}(\rn)$, where $F^\al_{p,q}(\rn)$
 denotes the Triebel-Lizorkin space; see
\cite[Section 3.5.3]{t92}. When $q=2$ and $p\in(1,2)$, the restriction $\az\in( n\max\{0,1/p-1/q\},1)$
becomes $\az\in( n(1/p-1/2),1)$, which implies $p>\f{2n}{2\al+n}$. This indicates the lower boundary condition of $p$ in Theorem \ref{Thm1} is the same as that for the Lusin area function characterizations of Triebel-Lizorkin spaces via the first order differences.

(iii) We remark that (i) is equivalent to (ii) in Theorem \ref{Thm1}(I)
 for all $p\in(1,\fz)$ if and only if $n\leq2\al$. Indeed, if $n\leq2\al$,
then, from Theorem \ref{Thm1}, we deduce that (i) is equivalent to (ii)
for all $p\in(1,\fz)$. Conversely, if (i) is equivalent to (ii) in Theorem \ref{Thm1}(I)
for all $p\in(1,\fz)$, then, by Lemma \ref{Lem}, one must have $n\leq2\al$.
\end{remark}

\bigskip

\noindent Feng Dai

\medskip

\noindent  Department of Mathematical and Statistical Sciences,
University of Alberta, Edmonton, AB, T6G 2G1, Canada

\smallskip

\noindent {\it E-mail}: \texttt{fdai@ualberta.ca}
\bigskip

\noindent
\noindent Jun Liu, Dachun Yang (Corresponding author) and Wen Yuan

\medskip

\noindent School of Mathematical Sciences, Beijing Normal
University, Laboratory of Mathematics and Complex Systems, Ministry
of Education, Beijing 100875, People's Republic of China

\smallskip

\noindent{\it E-mails:} \texttt{junliu@mail.bnu.edu.cn} (J. Liu)

\hspace{1.1cm}\texttt{dcyang@bnu.edu.cn} (D. Yang)

\hspace{1.1cm}\texttt{wenyuan@bnu.edu.cn} (W. Yuan)

\end{document}